\def\thmt@refnamewithcomma #1#2#3,#4,#5\@nil{%
  \@xa\def\csname\thmt@envname #1utorefname\endcsname{#3}%
  \ifcsname #2refname\endcsname
    \csname #2refname\expandafter\endcsname\expandafter{\thmt@envname}{#3}{#4}%
  \fi
}
\declaretheorem[numberwithin=section]{theorem}
\declaretheorem[sibling=theorem]{proposition}
\declaretheorem[sibling=theorem]{corollary}
\declaretheorem[sibling=theorem]{question}
\declaretheorem[sibling=theorem,style=definition]{definition}
\declaretheorem[sibling=theorem,style=definition]{lemma}
\newcounter{claimCounter}[theorem]
\declaretheorem[sibling=claimCounter,style=remark]{claim}
\newcounter{subClaimCounter}[claimCounter]
\newlist{equivalent}{enumerate}{1}
\setlist[equivalent,1]{label=\textup{(\arabic*)}}
\newlist{sublemma}{enumerate}{1}
\setlist[sublemma,1]{label=\textup{(\alph*)}}
\newlist{sublemma*}{enumerate*}{1}
\setlist[sublemma*,1]{label=\textup{(\alph*)},afterlabel=\hspace{5pt}}
\newlist{orderedlist}{enumerate}{1}
\setlist[orderedlist,1]{label=\textup{(\roman*)}}
\newlist{orderedlist*}{enumerate*}{1}
\setlist[orderedlist*,1]{label=\textup{(\roman*)},afterlabel=\hspace{3pt}}
\newcommand{\seq}[1]{{\left\langle{#1}\right\rangle}}
 \newcommand{\rest}[1]{\restriction{#1}} 
\newcommand{\tth}{{}^{\textup{th}}}
\newcommand{\conc}{\hat{\,\,}}
\newcommand{\andd}{\,\,\,\&\,\,\,}
\newcommand{\Then}{\,\Longrightarrow\,}
\newcommand{\Iff}{\,\,\Longleftrightarrow\,\,}
\renewcommand{\iff}{\leftrightarrow}
\newcommand{\then}{\,\,\rightarrow\,\,}
\DeclareMathOperator{\dom}{dom}
\DeclareMathOperator{\range}{range}
\DeclareMathOperator{\Dek}{Dek}
\newcommand{\converge}{\!\!\downarrow}
\newcommand{\diverge}{\!\!\uparrow}
\newcommand{\w}{\omega}
\newcommand{\s}{\sigma}
\newcommand{\vphi}{\varphi}
\renewcommand{\epsilon}{\varepsilon}
\newcommand{\wock}{\w_1^{\textup{ck}}}
\renewcommand{\le}{\leqslant}
\renewcommand{\ge}{\geqslant}
\renewcommand{\geq}{\geqslant}
\renewcommand{\preceq}{\preccurlyeq}
\renewcommand{\succeq}{\succcurlyeq}
\newcommand{\nle}{\nleqslant}
\newcommand{\nge}{\ngeqslant}
\newcommand{\Tur}{\textup{\scriptsize T}}
\newcommand{\PP}{\mathbb{P}}
\newcommand{\QQ}{\mathbb{Q}}
\newcommand{\MM}{\mathbb{M}}
\newcommand{\GG}{\mathbb{G}}
\newcommand{\SSS}{\mathbb{S}}
\newcommand{\force}{\Vdash}
\newcommand{\UniformIntro}[1]{\le^{\textup{ui}}_{#1}}
\newcommand{\NonUniformIntro}[1]{\le^{\textup{i}}_{#1}}
\newcommand{\nUniformIntro}[1]{\nle^{\textup{ui}}_{#1}}
\newcommand{\nNonUniformIntro}[1]{\nle^{\textup{i}}_{#1}}
\newcommand{\iT}{\NonUniformIntro{\Tur}}
\newcommand{\uiT}{\UniformIntro{\Tur}}
\newcommand{\niT}{\nNonUniformIntro{\Tur}}
\newcommand{\nuiT}{\nUniformIntro{\Tur}}
\newcommand{\iS}{\NonUniformIntro{c.e.}}
\newcommand{\uiS}{\UniformIntro{c.e.}}
\newcommand{\ie}{\NonUniformIntro{e}}
\newcommand{\uie}{\UniformIntro{e}}
\newcommand{\erank}[2]{\lambda(#1;#2)}
\newcommand{\frk}[3]{\lambda^*_{#1}(#2;#3)}
\newcommand{\srk}[2]{\lambda^*(#1;#2)}
\title{Computing sets from all infinite subsets}
\author[N.\:Greenberg]{Noam Greenberg} 
\address{School of Mathematics and Statistics, Victoria University of Wellington, Wellington, New Zealand}
\email{greenberg@msor.vuw.ac.nz}
\urladdr{\url{http://homepages.ecs.vuw.ac.nz/~greenberg/}}
\author[M.\:Harrison-Trainor]{Matthew Harrison-Trainor}
\address{School of Mathematics and Statistics,
Victoria University of Wellington, New Zealand \textit{and} The Institute of Natural and Mathematical Sciences, Massey University, New Zealand}
\email{matthew.harrisontrainor@vuw.ac.nz}
\urladdr{\url{http://homepages.ecs.vuw.ac.nz/~harrism1/}}
\author[L.\:Patey]{Ludovic Patey}
\address{CNRS, Institut Camille Jordan,
	Universit\'e Lyon 1, France}
\email{ludovic.patey@computability.fr}
\urladdr{\url{https://ludovicpatey.com/}}
\author[D.\:Turetsky]{Dan Turetsky}
\address{School of Mathematics and Statistics,
	Victoria University of Wellington, New Zealand}
\email{dan.turetsky@vuw.ac.nz}
\urladdr{\url{http://homepages.ecs.vuw.ac.nz/~dan/}}
\thanks{Greenberg and Turetsky were supported by a Marsden Fund grant \#17-VUW-090.}
\begin{document}

\begin{abstract}
	A set is introreducible if it can be computed by every infinite subset of itself. Such a set can be thought of as coding information very robustly. We investigate introreducible sets and related notions. Our two main results are that the collection of introreducible sets is $\Pi^1_1$-complete, so that there is no simple characterization of the introreducible sets; and that every introenumerable set has an introreducible subset.
\end{abstract}

\maketitle


\section{Introduction} 
\label{sec:introduction}

What information can be coded into all infinite subsets of some set of natural numbers? In one extreme, Soare \cite{Soare:higher} constructed a set which is computable from none of its coinfinite subsets. In the other extreme, an infinite\footnote{The definition is vacuously satisfied when applied to finite sets, so we restrict our attention to infinite sets, sometimes without otherwise mentioning the fact, when talking about introreducible sets and related notions.} set is \emph{introreducible} if it is computable from all of its infinite subsets. In some sense, introreducibility captures a property of having redundant information content. 

Introreducible sets were introduced by Dekker and Myhill \cite{DekkerMyhill58} as a property of retraceable sets, but have perhaps surpassed the latter in importance. The simplest example of introreducible sets are the Dekker sets: given a set $A$ of natural numbers, view $A$ as an infinite binary sequence (with $A(i) = 1 \Longleftrightarrow i \in A$), and form the set $\Dek(A)$ of all initial segments of this binary sequence. From $A$ one can compute $\Dek(A)$ and vice versa, so that $A \equiv_T \Dek(A)$. Moreover, given any infinite subset $B$ of $\Dek(A)$, $B$ may be missing certain initial segments of $A$, but nevertheless it contains arbitrarily long initial segments of $A$. So from $B$ we can still recover $A$ and hence $\Dek(A)$. Thus $\Dek(A)$ is introreducible; and every Turing degree contains an introreducible set.

A second simple example of introreducible sets arises from self-moduli. If $g\colon \w\to \w$ is an increasing self-modulus---meaning that every~$f$ dominating~$g$ computes~$g$---then the range of~$g$ is introreducible.
The simplest example of such a $g$ is the settling time of a computably enumerable set $A$. Such a set $A$ has a computable approximation $(A_s)_{s \in \omega}$ and the settling time function is $g(n) = s$ for the least stage $s$ such that the computable approximation to $A$ has settled on the first $n$ numbers: $A_s \upharpoonright_n = A \upharpoonright_n$. That is, if some $i < n$ is going to enter $A$, it must have done so by stage $g(n)$. Given $g$ we can compute $A$, and vice versa; moreover, given any other function $f$ that dominates $g$, $f$ can compute $A$ (and hence $g$), because to compute $A \upharpoonright_n$ we can run the approximation $A_s \upharpoonright_n$ up to stage $f(n) \geq g(n)$. So $g$ is an increasing self-modulus. Now let $B$ be the range of $g$. Since $g$ is increasing, the $n$th element of $B$ is $g(n)$. $B$ is introreducible because given any infinite subset $C$ of $B$, the $n$th element of $C$ is greater than the $n$th element $g(n)$ of $B$, and so the principal function $f(n)$ of $C$ (where $f(n)$ is the $n$th element of $C$) dominates $g(n)$; this $f$ computes $g$ and $B$.

Perhaps the key question is: What makes a set introreducible? Are all introreducible sets built by some combination of the two methods we just described?

\smallskip{}

Introreducible sets were studied in detail by Jockusch \cite{Jockusch68:Introreducible}, who introduced the notion of \emph{uniformly introreducible sets}: infinite sets which are computable from each of their infinite subsets via a single reduction procedure. Lachlan (see \cite{Jockusch68:Introreducible}) constructed an example of an introreducible set which is not uniformly introreducible. Jockusch showed that in several ways, the uniform notion is more tractable. For example, he showed that if a set and its complement are both uniformly introreducible, then it is computable; in contrast, he was only able to show that if a set and its complement are introreducible, then it is~$\Delta^1_2$. Solovay \cite{Solovay78} improved this bound to~$\Delta^1_1$. It was only much later when Seetapun and Slaman~\cite{SeetapunSlaman95} showed, using Seetapin's results on Ramsey's theorem together with an argument of Jockusch, that if a set and its complement are both introreducible, then it is computable.

Jockusch left open a number of other questions about introreducible sets, but since Seetapun and Slaman's work, the study of introreducible sets has been somewhat dormant. Recently there has been great interest in a number of related concepts which involve computing with all of the subsets of a given set, and so it seems an appropriate time for introreducible sets to make a return. We answer several key questions that were left open for the fifty years since \cite{Jockusch68:Introreducible}.

This paper has two main themes. First, can we give a simple characterization of introreducible sets?  Perhaps we can show that every uniformly introreducible set can be constructed as some combination of initial segments and self-moduli. Jockusch asked whether the collection of (uniformly) introreducible sets is~$\Pi^1_1$-complete, which would give a strong negative answer. In this paper we give an answer to Jockusch's question by showing that the collection of (uniformly) introreducible sets is indeed~$\Pi^1_1$-complete. In doing so, we introduce new ways of constructing introreducible sets which offer much more flexibility than the two methods described above; in essence, our proofs show that in general, one must understand introreducibility via ordinals and paths through trees.

The second line of enquiry considers strengthening introreducibility properties by passing to subsets. For example, we can ask whether every infinite introreducible set has an infinite uniformly introreducible subset. This kind of question arises from Jockusch's investigation of the related notion of \emph{introenumerable} sets, namely infinite sets which are c.e.\ relative to each of their infinite subsets. Every c.e.\ set has a computable subset, and so if~$A$ is uniformly introenumerable, then we can imagine that from the subsets of~$A$ we could somehow compute some fixed subset of~$A$. Thus, Jockusch asked whether every infinite uniformly introenumerable set has an infinite uniformly introreducible subset. We answer this question in the affirmative. 

\medskip{}

Introreducibility is related in spirit to the Ramsey-type problems which have been at the forefront of recent reverse mathematics. Problems such as Ramsey's theorem or the pigeonhole principle have the property that any infinite subset of a solution is itself a solution, and a key step in their analysis has been to look at what can be computed by every solution. (Indeed, introreducibility can be thought of as similar to a one-sided version of the pigeonhole principle.) Indeed as mentioned above Seetapun and Slaman's~\cite{SeetapunSlaman95} that an introreducible and co-introreducible set is computable was from Seetapun's work on Ramsey's theorem, namely that one can find solutions to Ramsey's theorem which avoid computing a particular set. More recently, what can be computed from all subsets of a given set played a key role in Monin and Patey's completion of the last step in the 40-year old program of the computability-theoretic analysis of Ramsey's theorem \cite{monin2019srt22}, namely the separation of $\mathsf{SRT}^{2}_2$ from $\mathsf{RT}^2_2$.

Introreducibility is not the only approach to studying information redundancy. Another recent strand of work has been on coarse and generic computability, e.g.\ \cite{MR2901074,Igusa13,Astor15,HJMS,JockuschSchupp17,Hirschfeldt20}, where for example a set $A$ is generically reducible to a set $B$ if there is a Turing functional $\Phi$ that given any density-1 partial oracle for $B$ (i.e., an oracle that does not always answer, but when it does answer it gives the correct answer), computes $A$ on a subset of density one, and is always correct when it answers. The set~$A$ must be coded redundantly in~$B$, as the computation has to work no matter which parts of the oracle $B$ we lose access to.

Introreducible sets (usually the Dekker set) have also proved useful in constructions, most famously in the work of Slaman and Woodin on definability in the Turing degrees \cite{SlamanWoodin} but also  e.g.\ in \cite{FokinaRosseggerSanMauro}. These are not deep applications of the theory of introreducible sets, but they show that introreducibility is a natural and useful notion.

\smallskip{}

\subsection{Binary relations} 
\label{sub:binary_relations}

Introreducibility was defined as a property of a single set, but our investigations lead us to consider the following notions between two infinite sets.

\begin{definition} \label{def:intro_reductions} 
	Let $B$ be an infinite set.
	\begin{sublemma}
		\item A set~$A$ is \emph{introcomputable} from a set~$B$ if every infinite subset of~$B$ computes~$A$. We write $A \iT B$.

		\item A set~$A$ \emph{uniformly introcomputable} from~$B$ if there is a fixed Turing reduction (functional)~$\Phi$ such that for every infinite $S\subseteq B$ we have $\Phi(S) = A$. We write $A \uiT B$. 
	\end{sublemma}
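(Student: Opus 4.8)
The plan is simply to unwind Definition~\ref{def:intro_reductions}; the statement is a toolkit of formal properties of $\iT$ and $\uiT$ that will be used throughout, and no clause of it is deep. Concretely, I would verify, in this order: first, the implications $A \uiT B \Then A \iT B \Then A \le_T B$ for infinite $B$; second, monotonicity in the second coordinate, namely that if $A \iT B$ and $C$ is an infinite subset of $B$ then $A \iT C$, and the same for $\uiT$ with the \emph{same} witnessing functional; third, that the unary notions are the diagonal cases, i.e.\ an infinite set $A$ is introreducible precisely when $A \iT A$ and uniformly introreducible precisely when $A \uiT A$; and fourth, closure under a Turing reduction on the left: if $A \le_T A'$ and $A' \iT B$ then $A \iT B$, and likewise for $\uiT$ once we fix a functional computing $A$ from $A'$.

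For the first clause, note that $B$ is itself an infinite subset of $B$, so any single functional $\Phi$ witnessing $A \uiT B$ gives $\Phi(B)=A$, hence $A \le_T B$; and a single witnessing functional is in particular a family of functionals indexed by the infinite subsets of $B$, which gives $A \iT B$, and applying this to the subset $B \subseteq B$ recovers $A \le_T B$. For the second clause, every infinite subset of $C$ is an infinite subset of $B$, so any procedure succeeding on all infinite subsets of $B$ a fortiori succeeds on all infinite subsets of $C$; the point is that the transfer keeps a single functional fixed, which is exactly why $\uiT$ behaves well under shrinking the right-hand side. The third clause is an immediate comparison with the definitions of (uniform) introreducibility recalled in the introduction. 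For the fourth, fix $\Psi$ with $\Psi(A')=A$; given an infinite $S \subseteq B$, in the non-uniform case $S$ computes $A'$ and hence $A$, while in the uniform case, if $\Phi$ witnesses $A' \uiT B$ then $\Phi(S)=A'$, so the composite functional $\Psi \circ \Phi$ — a single functional — satisfies $(\Psi\circ\Phi)(S)=\Psi(A')=A$.

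I do not expect a genuine obstacle: all the work has gone into choosing the definitions, and each clause is a one-line observation. The two spots that deserve a moment's care are the uniform statements, where one must exhibit a \emph{single} functional — automatic for clause two, and for clause four the remark that the composition of two fixed functionals is again a fixed functional. It is also worth recording the non-example that stops this statement from over-promising: $\iT$ is not transitive, since if $A \iT B$ and $B \iT C$ then an infinite $S \subseteq C$ computes $B$ but need not be a subset of $B$, so it is not among the sets that $A \iT B$ constrains; thus one should not attempt to upgrade clause four to a composition law for $\iT$ on the right.
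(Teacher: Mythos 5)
The item you were given is \cref{def:intro_reductions}, a definition; the paper attaches no proof to it, and your plan of ``unwinding the definition'' and recording its immediate formal consequences is the right reading of the situation. Clauses one through four of your proposal are all correct and match remarks the paper makes in passing (e.g.\ the diagonal characterization of introreducibility immediately after the definition, and the uniformization-by-composition trick used throughout).

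However, your closing ``non-example'' is wrong: $\iT$ \emph{is} transitive, and the paper says so explicitly at the start of \cref{sub:cohen_subsets_of_an_infinite_set} (``It is clear that the relations $\iT$ and $\uiT$ are transitive; indeed if $A \le_\Tur B$ and $B \iT C$ then $A \iT C$''). Your objection --- that an infinite $S \subseteq C$ computes $B$ but need not be a subset of $B$ --- overlooks your own first clause: $A \iT B$ already gives $A \le_\Tur B$ (take $B$ itself as the subset), and Turing reducibility composes, so $S \ge_\Tur B \ge_\Tur A$. The same argument gives transitivity of $\uiT$: if $\Psi$ witnesses $B \uiT C$ and $\Phi$ witnesses $A \uiT B$, then $S \mapsto \Phi(\Psi(S)) = \Phi(B) = A$ is a single functional. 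The relation whose transitivity genuinely fails is ``c.e.\ in'', which is why the paper's \cref{thm:transitivity_for_ie} (transitivity of $\iS$) requires an actual forcing argument rather than this one-line composition.
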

\end{definition}

\noindent Thus, a set~$A$ is introreducible if it is \emph{self-introcomputable}, that is, if $A\iT A$; similarly, it is uniformly introreducible if $A \uiT A$. In some ways, it appears that these binary relations are more fundamental, certainly more tractable, than the reflexive notions of introreducibility. For example, we will show that uniformity can be achieved by passing to a subset:

\begin{theorem} \label{thm:uniformization_theorem}
	If $A\iT B$, then there is some infinite $C\subseteq B$ such that $A \uiT C$. 
\end{theorem}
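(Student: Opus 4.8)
The plan is to build the subset $C \subseteq B$ together with the uniform functional $\Phi$ by a tree-of-strategies / König's lemma argument. The obstacle is that $A \iT B$ only gives us, for each infinite $S \subseteq B$, \emph{some} functional $\Phi_{e_S}$ with $\Phi_{e_S}(S) = A$; the indices $e_S$ need not be bounded, and different subsets may genuinely need different functionals. So we must thin $B$ down until a single index works. The key idea is a compactness/forcing argument on the "tree of finite subsets of $B$": think of infinite subsets of $B$ as paths through the tree $T_B$ of finite subsets of $B$ ordered by end-extension (equivalently, increasing enumerations). For a fixed functional index $e$, say that a node $\sigma \in T_B$ (a finite subset of $B$) is \emph{$e$-good} if $\Phi_e(\tau) \subseteq A$ and $\Phi_e(\tau)$ is "as long as we've forced so far" for every $\tau \supseteq \sigma$ in $T_B$ — more precisely, we want the set of $\sigma$ for which $\Phi_e$ restricted to the subtree above $\sigma$ never disagrees with $A$ and is cofinal (i.e. $\lim_{\tau} |\Phi_e(\tau)| = \infty$ along every path above $\sigma$).

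First I would argue that there is \emph{some} index $e$ and \emph{some} $\sigma_0 \in T_B$ such that every infinite $S \subseteq B$ extending $\sigma_0$ has $\Phi_e(S) = A$. Suppose not. Then for every $e$ and every finite $\sigma \subseteq B$ there is an infinite $S_{e,\sigma} \subseteq B$ with $\sigma \subseteq S_{e,\sigma}$ but $\Phi_e(S_{e,\sigma}) \ne A$. Using this I want to diagonalize: build an infinite $S \subseteq B$ defeating every functional, contradicting $A \iT B$. The diagonalization runs through all pairs $(e,k)$: having committed to a finite $\sigma \subseteq B$, pick the witness $S_{e,\sigma}$ above $\sigma$ and a finite initial segment $\tau$ of (an enumeration of) $S_{e,\sigma}$ long enough that $\Phi_e(\tau)$ has already committed to an error against $A$ — either $\Phi_e(\tau) \not\subseteq A$, or $\Phi_e(\tau)$ halts on some input and gives the wrong bit. (If no finite initial segment of $S_{e,\sigma}$ ever commits an error, then $\Phi_e(S_{e,\sigma})$ would be a subset of $A$; to defeat this case I instead need $\Phi_e(S_{e,\sigma})$ to omit some element of $A$, which is again a finite commitment once $S_{e,\sigma}$ enumerates past the relevant point — here one has to be a little careful and may instead argue that the bad set $S_{e,\sigma}$ can be taken with $\Phi_e(S_{e,\sigma})$ either not contained in $A$ or not all of $A$, and handle "proper subset" by waiting until the enumeration reveals that a fixed element of $A$ is permanently omitted). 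Set $\sigma := \tau$ and move to the next requirement. The union $S$ of these $\tau$'s is an infinite subset of $B$ with $\Phi_e(S) \ne A$ for all $e$, the desired contradiction. The delicate point, and the main obstacle, is exactly this subset-versus-equality issue: defeating "$\Phi_e(S) \subsetneq A$" is not a finite event, so one must set up the bookkeeping so that it becomes one — e.g. by first restricting to a subtree on which $\lim |\Phi_e|$ is finite (handled trivially, as then $\Phi_e(S)$ is finite hence $\ne A$) versus the subtree on which it is infinite but contains a wrong bit.

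Once we have $e$ and $\sigma_0$ with $\Phi_e(S) = A$ for every infinite $S \subseteq B$ with $\sigma_0 \subseteq S$, I take $C$ to be any infinite subset of $B$ with $\sigma_0 \subseteq C$ (for instance $C = \sigma_0 \cup (B \setminus \max(\sigma_0 + 1))$, or just $C = B$ if $\sigma_0 = \emptyset$). Every infinite $S \subseteq C$ is an infinite subset of $B$ extending $\sigma_0$, so $\Phi_e(S) = A$; thus $A \uiT C$ via $\Phi_e$, modulo hardwiring the finitely many values forced by $\sigma_0$ into the functional (formally, let $\Phi'$ be $\Phi_e$ run only on oracles extending $\sigma_0$, which is legitimate since all oracles $S \subseteq C$ of interest do extend $\sigma_0$). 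This completes the proof. The whole argument is essentially a Baire-category / exhaustion argument on the space of infinite subsets of $B$: either some finite condition pins down a uniform reduction, or we can build a generic-enough subset that kills every reduction at once — and the latter is impossible by hypothesis.
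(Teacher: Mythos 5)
There is a genuine gap, and in fact the pivotal claim of your argument is false. You assert that if no single pair $(e,\sigma_0)$ has the property that $\Phi_e(S)=A$ for \emph{every} infinite $S\subseteq B$ end-extending $\sigma_0$, then one can diagonalize by finite extensions to build one $S\in[B]^\w$ defeating every functional. The first horn of this dichotomy is refuted by Lachlan's example of an introreducible set that is not uniformly introreducible: take $A=B$ to be such a set, so $A\iT B$. If some $(e,\sigma_0)$ worked for all infinite $S\supseteq\sigma_0$, then any infinite $S\subseteq B$ could be uniformly converted to $\sigma_0\cup(S\cap(\max\sigma_0,\infty))$, which end-extends $\sigma_0$; hardwiring $\sigma_0$ into $\Phi_e$ would then witness $A\uiT B$, a contradiction. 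So the first horn can fail; and yet the second horn (the diagonalization) cannot then be carried out, because the failure of $\Phi_e(S)=A$ for a particular witness $S_{e,\sigma}$ need not be finitely committed: $\Phi_e(S_{e,\sigma})$ may be partial, or may be a proper subset of $A$, and no finite initial segment $\tau\prec S_{e,\sigma}$ forces either of these for all infinite end-extensions of $\tau$ (every $\tau$ may well have \emph{other} infinite extensions on which $\Phi_e$ is total and correct). You flag this as "the delicate point" but the bookkeeping you gesture at does not exist: a finite-extension (Cohen-style) argument only controls sufficiently generic subsets of $B$, whereas the conclusion $A\uiT C$ quantifies over all continuum-many infinite subsets of $C$.

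The repair is precisely where the paper departs from your plan: one must shrink the \emph{reservoir} to a sparse infinite subset of $B$, not merely extend a finite stem, so that the resulting $C$ is of the form $F\cup X$ with $X\in[B]^\w$ typically coinfinite in $B$ (think of the case where the reduction only works on subsets whose principal function dominates a modulus). Concretely, the paper forces with unrestricted Mathias conditions $(F,X)$: genericity yields a condition forcing $\Theta(G)=A$, and then the Galvin--Prikry theorem (the strong Prikry property, \cref{prop:Prikry}) allows one to thin $X$ so that the condition \emph{strongly} forces $\Theta(G)=A$, i.e., the equality holds for every infinite set compatible with $(F,X)$, not just generic ones. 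That Ramsey-theoretic step is the essential ingredient your proposal is missing, and without it the passage from "some condition works generically" to "some infinite $C\subseteq B$ works for all of its infinite subsets" does not go through.
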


\noindent On the other hand, we still do not know whether every introreducible set has a uniformly introreducible subset; the issue is that when $A = B$ is introcomputable, by passing to a set $C \subseteq A = B$, we are changing both the set we are trying to compute and the set doing the computing at the same time.

\medskip

\Cref{def:intro_reductions} can be extended to binary relations other than Turing reducibility. For example, for enumerability, we write $A \iS B$ if $A$ is c.e.\ relative to every infinite subset of~$B$, and $A \uiS B$ if a single enumeration procedure enumerates~$A$ from every infinite subset of~$B$. Considering the question of improving introenumerability to introreducibility, we show:

\bigskip

\begin{theorem} \label{thm:failure_of_improving_enumerability} \
\begin{sublemma}
	\item \label{item:failure_right}
	There are sets~$A$ and~$B$ such that $A \uiS B$, but there is no infinite $C\subseteq B$ such that $A \iT C$. 
	
	\item \label{item:failure_left}
	There are sets~$A$ and~$B$ such that $A \uiS B$, but there is no infinite $C\subseteq A$ such that $C \iT B$. 
\end{sublemma}
\end{theorem}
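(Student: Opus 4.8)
The plan is to do the two parts by separate constructions, both built around the fact that a single c.e.\ operator can enumerate a set from \emph{all} infinite subsets of $B$ in a very cheap way, whereas no Turing functional can. For the first part, take $A=\emptyset'$ (or any fixed noncomputable c.e.\ set). Then $A\uiS B$ holds for \emph{every} infinite set $B$: the operator whose only axioms are $\seq{x,\emptyset}$ for $x\in\emptyset'$ (empty use) outputs $\emptyset'$ on every oracle, so $\Phi^S=\emptyset'$ for each infinite $S\subseteq B$. Hence it suffices to produce a single infinite set $B$ such that $\emptyset'$ is computed by \emph{no} infinite subset of $B$: for then, given any infinite $C\subseteq B$, the set $C$ is itself an infinite subset of $C$ that does not compute $A$, so $A\niT C$. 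Such a $B$ is built by a Mathias-style forcing: conditions are pairs $(F,R)$ with $F$ finite, $R$ infinite and computable, $\max F<\min R$, where $(F',R')$ extends $(F,R)$ iff $F\subseteq F'\subseteq F\cup R$ and $R'\subseteq R$, and $B=\bigcup_n F_n$ along a sufficiently generic descending sequence, so that $B\subseteq F_n\cup R_n$ for every $n$. Besides the trivial requirements forcing $B$ infinite, for each $e$ one must meet ``$\Phi_e^D\ne\emptyset'$ for every infinite $D\subseteq F\cup R$''; given $(F,R)$, if there are $z$ and an infinite computable $R'\subseteq R$ with $\Phi_e^E(z)$ undefined or $\ne\emptyset'(z)$ for every finite $E\subseteq F\cup R'$, pass to $(F,R')$, which settles the requirement permanently (further shrinking only deletes candidate $D$'s). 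Showing that when this easy diagonalization is unavailable one can still extend — using the $\Delta^0_2$-ness of $\emptyset'$ to see that otherwise $\emptyset'$ would be computable from the computable set $R$ — and doing so in a way insensitive to which infinite subset of $B$ one later takes, is the technical heart of this part.

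For the second part, $A$ cannot be chosen so cheaply: an infinite computable subset $C\subseteq A$ would satisfy $C\iT B$ for any $B$, so $A$ must be immune, and in particular (being infinite) not c.e., so $A$ genuinely has to be coded into $B$ by the enumeration. The idea is to make $B$ contain a minimal pair. Construct sets $D_0,D_1$ whose degrees form a minimal pair and set $B=D_0\oplus D_1$, so the even and odd columns are infinite subsets of $B$. Simultaneously enumerate a c.e.\ set of axioms $\seq{x,\{b\}}$ so that, for each $x$, the set of $b\in B$ witnessing $x$ is either cofinite in $B$ or empty, and let $A$ collect the $x$ of the first kind; since any infinite $S\subseteq B$ is infinite in at least one column, it contains a witness of every $x\in A$, so this operator gives $A\uiS B$. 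While building $A$ we also diagonalize, reserving numbers for $\overline A$, to make $A$ immune. Then for any infinite $C\subseteq A$: $C$ is immune, hence noncomputable, hence not below both $D_0$ and $D_1$; so one of these two infinite subsets of $B$ fails to compute $C$, i.e.\ $C\niT B$.

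The main obstacle in the second construction is the interference among three kinds of requirement in one argument: the minimal-pair strategy wants to restrain $D_0$ and $D_1$, the immunity strategy wants to keep infinitely many numbers out of $A$ (hence certain $b$ out of $B$), and the coding wants cofinitely much of each column \emph{in} $B$. One arranges the coding so that it only ever demands that a fixed computable reservoir be absorbed into $B$ up to a finite error — leaving the minimal-pair and immunity strategies free — and runs everything as a priority argument. In the first construction the analogous difficulty is packaged entirely inside the forcing, namely in arranging the cone-avoidance requirement to be insensitive to the choice of infinite subset; that is where I expect the real work to lie.
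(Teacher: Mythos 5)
Your construction for part \ref{item:failure_right} cannot work, because $\emptyset'$ is computably encodable: no infinite set $B$ of the kind you want exists, and in fact no hyperarithmetic $A$ can witness part \ref{item:failure_right}. Concretely, let $g$ be the settling-time function of $\emptyset'$ (the least $s$ with $\emptyset'_s\rest{n}=\emptyset'\rest{n}$). Given \emph{any} infinite $B$, pass to an infinite $C\subseteq B$ sparse enough that $p_C\ge g$. Every infinite $D\subseteq C$ then also has $p_D\ge g$, so $D$ computes $\emptyset'$ uniformly by running the approximation up to stage $p_D(n)$; hence $\emptyset'\uiT C$, the exact opposite of what you need. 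This is precisely where your deferred ``technical heart'' sits: for the functional $\Phi_e$ just described, the requirement ``$\Phi_e(D)\ne\emptyset'$ for every infinite $D\subseteq F\cup R$'' is unsatisfiable below every condition, since every infinite computable reservoir $R$ contains a sufficiently sparse infinite subset. The same obstruction rules out any $A$ with a modulus, i.e.\ any $\Delta^1_1$ set (Solovay), and \cref{thm:improving_enumerability_one_sided}\ref{item:noncollapsing} further forces $\w_1^B>\wock$ for any witness. The fix is to push $A$ outside $\Delta^1_1$ while retaining just enough of the modulus phenomenon to get $A\uiS B$: one shows that every $\Pi^1_1$ set has a \emph{uniform c.e.\ modulus} $f$ (a relative c.e.\ operator $\Theta$ with $\Theta(g)=A$ for all $g\ge f$), takes $A$ to be $\Pi^1_1$ but not $\Delta^1_1$ (e.g.\ Kleene's $\+O$), uses the failure of computable encodability to get $D$ with no infinite subset computing $A$, and thins $D$ to $B$ with $p_B\ge f$. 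Then $A\uiS B$ via $\Theta\circ p$, while no infinite $C\subseteq B$ computes $A$, so a fortiori $A\niT C$.

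Your part \ref{item:failure_left} is a genuinely different and, I think, viable route. The coding mechanism (singleton axioms whose witness sets are cofinite in $B$ or disjoint from it) is essentially the same device the paper uses, but where the paper adjoins a Cohen-generic $D\subseteq B$ and diagonalizes directly against pairs of functionals $\Psi(B)=\Gamma(D)$ computing an infinite subset of $A$, you factor the diagonalization through two structural facts: $A$ immune (so every infinite $C\subseteq A$ is noncomputable) and the two columns of $B=D_0\oplus D_1$ forming a minimal pair (so $C\iT B$ would force $C\le_\Tur D_0$ and $C\le_\Tur D_1$, hence $C$ computable). That is a clean reduction, and the three requirement families do not obviously conflict, since the coding and immunity commitments only ever shrink the pool of future $B$-elements to an infinite computable set in each column. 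What still needs to be written down is the ``computable common value'' case of the minimal-pair (mutual genericity) argument in the presence of the coding: the search for a convergent computation must range over extensions that are \emph{legal} for the current $A$-commitment, so you need that legality is computable from the finite condition data and monotone under later commitments. That holds here, but it is the one place the sketch could silently go wrong, and it is where the details must be supplied.
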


We were thus surprised to obtain our first main result, namely:

\begin{theorem} \label{thm:improving_enumerability}
	If $A \uiS A$ and $A$ is infinite then there is some infinite $B\subseteq A$ such that $B \uiT B$. That is, every uniformly introenumerable set has a uniformly introreducible subset. 
\end{theorem}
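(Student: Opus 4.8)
The plan is to produce the subset $B \subseteq A$ by building it together with a single Turing functional $\Phi$ witnessing $B \uiT B$, and to organize matters so that $B$ is, in essence, \emph{retraceable}: there is a partial computable $\rho$ with $\rho(b)$ equal to the $B$-predecessor of $b$ for every $b \in B$ and $\rho$ divergent on $\min B$. Any such $B$ satisfies $B \uiT B$ by a single functional: given an infinite $S \subseteq B$ and an input $k$, search $S$ for some $b \ge k$ and iterate $\rho$ from $b$ until divergence, thereby recovering $B \cap [0,b] \supseteq B \cap [0,k]$ and deciding membership in $B$ below $k$ --- note that $S$ is used only to supply one large element of $B$. (The uniformization theorem \Cref{thm:uniformization_theorem} does not apply here, since the set being computed and the set doing the computing coincide; and the object we can actually build may be weaker than a linear retracing function --- a tree of ``markers'' rather than a predecessor --- which is precisely where the ordinal machinery enters.)

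Fix a c.e.\ operator $W$ witnessing $A \uiS A$, so $W^S = A$ for every infinite $S \subseteq A$. First I would set up ranks. Arrange the finite subsets of $A$ into a tree: a node of height $k$ is an increasing tuple $f_0 < \dots < f_{k-1}$ of elements of $A$, and its children are obtained by appending a larger element of $A$, so the infinite branches are exactly the infinite subsets of $A$. Call a node $F$ \emph{settled for $n$} (where $n \in A$) if $W$, run with oracle exactly $\chi_F$, enumerates $n$ using only queries $\le \max F$; this property is monotone along the tree. Let $T^n$ be the subtree of nodes no initial segment of which is settled for $n$. Since $W^S = A$ for every infinite $S \subseteq A$, an infinite branch of $T^n$ would be an infinite subset of $A$ never enumerating $n$ --- impossible --- so $T^n$ is well founded; write $\erank{F}{n}$ for the rank of $F$ in $T^n$. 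These ordinals are the point: the stage at which an infinite $S \subseteq A$ enumerates $n$ is oracle-dependent and unbounded, whereas $\erank{F}{n}$ is an invariant of the finite approximation $F$ alone, and this is what permits a construction uniform in $S$. To make the recursion go through one refines $\erank{\cdot}{\cdot}$ to a stable variant $\srk{\cdot}{\cdot}$ and a subset-relativized variant $\frk{\cdot}{\cdot}{\cdot}$, engineered so that the rank is non-increasing as the finite approximation grows and drops strictly precisely when genuine progress is made on $n$, with $\frk{S}{F}{n} \le \srk{F}{n}$.

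The construction then builds $B = \{b_0 < b_1 < \dots\} \subseteq A$ and defines $\rho$ by recursion. Having fixed $F_k = \{b_0, \dots, b_k\}$, and letting $n$ be the $k$-th element in a fixed enumeration of $A$ (so every element of $A$ is attended to cofinally often), choose $b_{k+1} \in A$ above $b_k$ large enough that: (i) $\srk{F_k \cup \{b_{k+1}\}}{n} < \srk{F_k}{n}$, which is possible because otherwise one could extend $F_k$ to an infinite subset of $A$ along which the stable rank for $n$ never decreases, contradicting well-foundedness; and (ii) $b_{k+1}$ bounds the use of a $W$-computation that witnesses the membership in $A$ of $b_k$ and of the finitely many numbers attended to so far, from an oracle prefix that $b_{k+1}$ encodes --- and then $\rho(b_{k+1})$ is defined to search for the least such witnessing prefix and extract $b_k$ from it. Because each $n$ is attended to cofinally and its stable rank strictly decreases each time, well-foundedness forces it eventually to bottom out, at which point $n$'s status in $A$ is permanently witnessed by short $W$-computations available from every infinite $S \subseteq B$; this is what makes $\rho$, hence $\Phi$, correct on all infinite subsets of $B$ and not merely on $B$ itself.

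The main obstacle is clause (ii) in tension with the demand that these witnessing computations be available from arbitrarily sparse $S \subseteq B$. It does not suffice to make $b_{k+1}$ bound the use of the computation enumerating $b_k$ from the full oracle $B$, since that use may call on elements of $B$ far above $b_{k+1}$, and bounding it along a single infinite extension is equally insufficient. The resolution is to run the whole rank recursion not against $A$ and its full subset-tree but against a sufficiently generic subtree --- equivalently, to work with the relativized ranks $\frk{S}{F}{n}$ along a carefully maintained family of candidate subsets still under consideration --- so that the computations we rely on are forced to converge on oracle prefixes common to every infinite $S$ still in play, while the tree stays wide enough to keep $B$ infinite and to keep the retracing proceeding one element at a time. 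Reconciling this genericity requirement with the monotonicity of the stable ranks is the technical heart of the argument.
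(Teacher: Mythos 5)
Your opening move --- building $B$ together with a partial computable retracing function $\rho$ satisfying $\rho(b_{k+1})=b_k$ --- is where the argument breaks. For $\rho$ to be partial computable, the number $b_{k+1}$ must itself computably determine $b_k$; but $b_{k+1}$ is chosen merely as a sufficiently large element of $A$, so you control its size and not its value, and it cannot ``encode'' an oracle prefix or anything else. (This device works for Dekker sets because their elements are, by construction, codes of initial segments; a subset of an arbitrary $A$ carries no such structure.) Relatedly, the paradigm ``$S$ is used only to supply one large element of $B$'' is too strong: a witness $F$ with $n\in W^F$ is usable only by subsets $S\supseteq F$, and a sparse $S\subseteq B$ will miss every particular finite set you plant during the construction. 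This is why the paper's final functional uses the entire principal function of the subset (it must majorize a deficiency function) rather than a single sample, and why the ranks must be shown to decrease along \emph{arbitrary} sparse subsets, not merely along the one chain $F_0\prec F_1\prec\cdots$ that you build. Your closing paragraph correctly names this tension, but the proposed resolution (``run the rank recursion against a sufficiently generic subtree'') is precisely the part you do not carry out; in \cref{prop:improving:main} it is the construction of a rank-minimal set by a transfinite $\subseteq^*$-decreasing sequence (\cref{clm:minimal:existence}), which in turn forces the rank itself to be redefined so as to ignore finite differences so that the limit stages survive.

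Second, even granting decreasing ranks, your procedure has no termination criterion: run from a sparse $S$, it cannot tell when the rank of $n$ has bottomed out, and for $n\notin A$ the rank is $\infty$ and never does, yet $\Phi(S)$ must still return an answer. The paper solves this by comparing the rank of $n$ step by step against that of a second number $m=f_Z(n)$ known to lie in $A$ and to have dominating rank, halting when $m$ is enumerated; securing such an $m$ forces a case split (\cref{clm:last}) in which one may only obtain $X\uiT Y$ for a proper subset $X\subseteq A$. Recovering $A\uiT C$ from this uses introenumerability of $A$ a second time via a modulus argument (\cref{prop:1.6_strengthened}), and converting $A\uiT C$ into a self-introreducible subset requires a further thinning by a self-modulus of an iterated jump (\cref{lem:3.9}). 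None of these steps is present in, or replaceable by retraceability within, your sketch; what you have is a correct identification of the right invariant (well-founded ranks on the trees of finite subsets not yet enumerating $n$) wrapped in a construction that does not work.
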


En route to proving this result, we show certain limitations of \cref{thm:failure_of_improving_enumerability}\ref{item:failure_right}:

\begin{theorem} \label{thm:improving_enumerability_one_sided}
	Suppose that $A \iS B$, and that either
	\begin{sublemma}
		\item \label{item:noncollapsing}
		$\w_1^B = \wock$, or
		
		\item  \label{item:introenumerable}
		$A$ is introenumerable. 
	\end{sublemma}
	Then there is some infinite $C\subseteq B$ such that $A \uiT C$. 
\end{theorem}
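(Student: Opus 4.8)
The plan is to reduce the statement to \cref{thm:uniformization_theorem} by way of a rank analysis of the enumeration of $A$. Observe first that whenever $X$ is an infinite subset of $B$, the hypothesis $A\iS B$ already makes $A$ c.e.\ in $X$; so if we can produce an infinite $C\subseteq B$ for which $\overline A$ is \emph{also} c.e.\ in every infinite subset of $C$ --- that is, $\overline A\iS C$ --- then $A$ is both c.e.\ and co-c.e.\ in each infinite $X\subseteq C$, hence $A\le_{\Tur}X$, so $A\iT C$, and \cref{thm:uniformization_theorem} promotes this to $A\uiT C'$ for some further infinite $C'\subseteq C\subseteq B$, which is exactly the conclusion we want. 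So everything reduces to the following: if $A\iS B$ and either $\w_1^B=\wock$ or $A$ is introenumerable, then some fixed infinite $C\subseteq B$ has $\overline A\iS C$. (One could instead skip \cref{thm:uniformization_theorem} and build a single functional on $C$ directly from the rank data below; the work is essentially the same.)

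For the core statement I would run a forcing over the finite subsets of $B$, regarded as initial segments of potential infinite subsets. For $n\notin A$, the danger is that along some infinite $X\subseteq B$ the witnessing enumeration --- or a competing one --- spuriously enumerates $n$; collecting the finite conditions that rule this out on \emph{all} of their extensions inside the eventual $C$, and taking the well-founded (Cantor--Bendixson type) rank of the resulting tree, yields an ordinal-valued rank, which one might write $\erank{A}{B}$ in the spirit of the later sections. This is the point at which the problem becomes one about ordinals and paths through trees. The two things to establish are: (i) the (suitably localized) tree of dangerous conditions is well-founded, with rank some \emph{computable} ordinal $\alpha$; and (ii) a compactness (K\"onig's lemma) pruning lets us pass to an infinite $C\subseteq B$ for which the rank behaves uniformly. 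Granting these, a transfinite induction on $\alpha$ produces, for each infinite $X\subseteq C$, a uniform-enough recognition procedure witnessing $\overline A\iS C$: at rank $0$ a condition already witnesses $n\notin A$ outright, and the successor and limit stages splice the procedures obtained at smaller ranks together with the given enumeration of $A$. It is essential here that $\alpha$ be computable, since the recursion of length $\alpha$ must output a genuine $X$-c.e.\ procedure rather than a merely hyperarithmetic object.

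The two hypotheses enter exactly at item (i) --- securing well-foundedness and the computability of $\alpha$ --- and this is where the real work and the main obstacle lie, since \cref{thm:failure_of_improving_enumerability}\ref{item:failure_right} shows that with no hypothesis the tree can genuinely fail to be well-founded (or have non-computable rank) and no $C$ works. Under (a), the tree is $\Pi^1_1(B)$ once it is set up correctly, so by $\Pi^1_1$-boundedness its rank, if it is well-founded, is $<\w_1^B=\wock$, hence computable; and ruling out an infinite path should also reduce to $\w_1^B=\wock$, an infinite path being incompatible with $B$ not collapsing $\wock$. Under (b), introenumerability of $A$ is what bounds things a priori: a set that is c.e.\ in each of its own infinite subsets cannot generate an ill-founded dangerous tree against a $B$ with $A\iS B$, and a computable bound on $\alpha$ can be read off the analysis applied to $A$ itself; this is the form of the statement that later feeds into the proof of \cref{thm:improving_enumerability}. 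Everything else --- the reduction to $\overline A\iS C$ and the bookkeeping of the transfinite induction once $\alpha$ is a fixed computable ordinal --- is routine.
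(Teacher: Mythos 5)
Your reduction---show $\overline A\iS C$, deduce $A\iT C$, then apply \cref{thm:uniformization_theorem}---is logically sound, but the order in which you invoke uniformization is backwards relative to what the rank analysis needs. The trees you want to rank must be defined from a \emph{fixed} enumeration functional $\Phi$; with only $A\iS B$ in hand, different infinite $X\subseteq B$ may require different enumeration procedures, so there is no single tree to rank, and ``spurious'' enumerations are not the relevant issue. The paper therefore uniformizes first: for item~(a) this is done with \cref{cor:Gandy_basis} (a Gandy-basis consequence of \cref{thm:a_lot_of_uniformisation}), which produces $D\in[B]^\w$ with $A\uiS D$ \emph{and} $\w_1^D=\wock$; the second conjunct is essential, since the rank analysis is then carried out over $D$. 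Your appeal to $\Pi^1_1$-boundedness is in the right spirit, though your tree is the wrong way around: the well-founded trees are the ones associated to $n\in A$ (finite $E\subseteq D$ with $n\notin\Phi(E)$), while for $n\notin A$ the corresponding tree is the full infinitely-branching tree. The deficiency-function machinery of \cref{prop:noncollapsing} uses a computable bound on the ranks of the well-founded trees to know when a descent along a sparse subset of $D$ has gone on long enough that failure to enumerate $n$ certifies $n\notin A$.

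The more serious gap is item~(b). You assert that introenumerability of $A$ lets one ``read off'' a \emph{computable} bound on the rank, but this is precisely what fails: without $\w_1^B=\wock$, the rank bound lives below $\w_1^B$ but not necessarily below $\wock$, and applying the analysis to $A$ itself merely relocates the problem to $\w_1^A$. The paper explicitly flags this obstruction, and \cref{thm:failure_of_improving_enumerability}\ref{item:failure_right} shows it is genuine rather than an artifact of the proof. The actual proof of item~(b) runs through \cref{prop:improving:main} and is fundamentally different: it constructs, by a \emph{transfinite} thinning process, a ``rank-minimal'' set $R\subseteq B$, and works with \emph{comparative} deficiency functions that only compare ranks of two pairs $(n,E)$ and $(m,F)$ against each other rather than against an absolute copy of an ordinal. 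Introenumerability then enters only in the final step (\cref{prop:1.6_strengthened}), supplying a modulus for enumerating $A$ from the set $X\in[A]^\w$ that \cref{prop:improving:main} produces. The computable-ordinal scheme you sketch cannot be made to close for~(b).
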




\smallskip

Finally, we turn to completeness. We show:

\begin{theorem} \label{thm:main_final_completeness_theorem}
	Uniformly, given a linear ordering~$\+L$, we can construct a set~$A$, co-c.e.\ in~$L$, such that:
	\begin{itemize}
		\item If $\+L$ is well-founded, then $A$ is uniformly introreducible; and
		\item If $\+L$ is ill-founded, then~$A$ is not introreducible. 
	\end{itemize}
\end{theorem}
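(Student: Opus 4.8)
The plan is to build, uniformly from a linear ordering $\+L = (L, <_L)$, a tree-like object whose infinite subsets are forced to recover enough information to reconstruct the whole set when $\+L$ is well-founded, while an infinite descending sequence in $\+L$ yields an infinite subset that loses the coding. Following the philosophy announced in the introduction---that introreducibility in general must be understood via ordinals and paths through trees---I would attach to each point $\ell \in L$ an ordinal rank (when $\+L$ is well-founded) and set up the coding so that recovering $A$ from an infinite subset $S \subseteq A$ proceeds by transfinite recursion along these ranks. Concretely, the elements of $A$ should be (codes for) finite strings $\sigma$ that carry both a "stem" of previously-coded data and a label from $L$; membership of $\sigma$ in $A$ is made $\Pi^0_1$ in $L$ by saying $\sigma \in A$ iff no "bad extension pattern" ever appears, where the bad patterns are exactly the finite witnesses to an $<_L$-descent. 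Then $A$ is co-c.e.\ in $L$ by construction.

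The core of the argument is the well-founded case. Assuming $\+L$ is well-founded, I would prove by transfinite induction on the $L$-rank that a single Turing functional $\Phi$, applied to any infinite $S \subseteq A$, outputs $A$. The induction hypothesis is that $\Phi$ has already correctly decided the membership in $A$ of all strings whose label has smaller rank, and using that partial information together with the fact that $S$ is infinite---so it contains strings with arbitrarily long stems---one extracts, for each next candidate string, arbitrarily long initial segments of the relevant binary data, exactly as in the Dekker-set and self-modulus examples from the introduction. The point of the ordinal ranks is that this recursion is well-defined precisely when $\+L$ is well-founded, and crucially $\Phi$ does not need to know the ordinals: it only needs the local combinatorial rule for passing from smaller-rank data to larger-rank data, which is why $\Phi$ is a single fixed functional independent of $\+L$. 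This gives $A \uiT A$, i.e.\ $A$ is uniformly introreducible.

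For the ill-founded case, fix an infinite $<_L$-descending sequence $\ell_0 >_L \ell_1 >_L \cdots$. I would use it to construct an infinite subset $S \subseteq A$ consisting of strings whose labels all lie on this descending chain, arranged so that $S$ never exhibits, at any finite stage, the combinatorial pattern that $\Phi$ (or indeed any reduction) would need in order to pin down the membership of some fixed string $\tau \in A$. Because the chain is infinite and descending, every finite piece of $S$ is "ambiguous"---it is consistent with both $\tau \in A$ and $\tau \notin A$ in a suitable variant ordering---so no functional applied to $S$ can compute $A(\tau)$ correctly for all such $S$; a standard overspill/forcing-the-enumeration argument then shows $A$ is not introreducible at all (not merely non-uniformly so).

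I expect the main obstacle to be the simultaneous design of the membership rule for $A$: it must be local and finitary enough that $A$ is co-c.e.\ in $L$ and that the recovery functional $\Phi$ is a single fixed procedure, yet global enough that in the well-founded case the transfinite recursion genuinely closes up and forces $S$ to reveal all of $A$. Balancing these—essentially, encoding "rank" information implicitly in the stems without ever referring to ordinals in $\Phi$—is the delicate part; the two case analyses afterwards should be routine given the right definitions.
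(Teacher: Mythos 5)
Your high-level intuition (ranks from $\+L$, descending sequences yielding bad subsets) matches the paper's, but the proposal is missing the concrete mechanisms at each of the three essential steps, and the one mechanism you do describe would not work as stated. In the well-founded case you have $\Phi$ perform a transfinite recursion along $L$-ranks, deciding lower-rank strings first; a single Turing functional cannot run such a recursion on an arbitrary infinite oracle $S\subseteq A$ without access to the ordinals, and you never say what the ``local combinatorial rule'' is. The paper's functional does no recursion at all: numbers are partitioned into sets $X_\beta$ ($\beta\in\+L$, giving each number a rank) and simultaneously into sets $Y_F$ ($F$ a finite set), and $\Phi(Z,n)$ outputs $F(n)$ whenever $Z$ contains $n<x<y$ with $r(x)\le_{\+L} r(y)$ and $y\in Y_F$; the set $A$ is built so that these answers are consistent with $A$ itself. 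Well-foundedness is used only to guarantee that every infinite $Z\subseteq A$ contains arbitrarily large such non-descending pairs, so $\Phi(Z)$ is total. That coding device --- each element of $A$ carrying, via which $Y_F$ it lies in, a finite initial segment of $A$, validated by a rank comparison --- is the heart of the construction and is absent from your sketch.

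The ill-founded case and the complexity bound are also genuinely incomplete. Producing one infinite $S\subseteq A$ with $A\nleq_T S$ (not merely defeating a fixed $\Phi$) is the hard part: the descending sequence may be far beyond $\emptyset'$, so you cannot diagonalise against all functionals along it while building $A$. The paper first replaces $\+L$ by the Kleene--Brouwer ordering of double descending sequences against a Harrison ordering, so that the well-founded part is never hyperarithmetic; it then builds, alongside $A$, a tree of finite approximations to candidate subsets indexed by all finite descending sequences, takes the actual descending sequence sufficiently generic for the tree of sequences through the ill-founded part, and uses an overspill argument (density of the diagonalisation sets at every well-founded level forces it at some ill-founded level) to conclude $\Psi(Z)\ne A$ for each $\Psi$. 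Your phrase ``standard overspill/forcing-the-enumeration argument'' does not identify any of this, and without the Harrison-ordering preprocessing the overspill is simply false for orderings with hyperarithmetic well-founded part. Finally, making $A$ co-c.e.\ in $\+L$ conflicts with the genericity needed for the diagonalisation; the paper resolves this with a finite-injury priority argument replacing $1$-genericity, whereas your static ``no bad extension pattern'' definition is not shown to be compatible with either the consistency of $\Phi$ on $A$ or the tree of candidate subsets.
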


Immediately, this shows that the collection of introreducible sets is ${\bf{\Pi}}^1_1$-complete (for sets of reals) under Borel reductions, witnessed by a Baire class 1 function; the same holds for the collection of uniformly introreducible sets. As mentioned above, this answers one of Jockusch's questions from~\cite{Jockusch68:Introreducible}. Restricted to the computable realm, \cref{thm:main_final_completeness_theorem} shows that the collection of~$\Pi^0_1$ indices for (uniformly) introreducible co-c.e.\ sets is ${{\Pi}}^1_1$-complete, under many-one reducibility. Indeed, using Soare's notation (see \cite{Soare1987Recursively}), it shows that the pair $(N,U)$ is $(\Sigma^1_1,\Pi^1_1)$-$m$-complete, where $N$ is the collection of~$\Pi^0_1$ indices of co-c.e.\ sets which are not introreducible, and~$U$ is the set of~$\Pi^0_1$ indices of co-c.e.\ uniformly introreducible sets. Informally, what this result means is that there is no characterization of the introreducible or uniformly introreducible sets that is simpler than the naive definition: the simplest way to decide whether a set is introreducible is to check whether it is computable from all of its infinite subsets.

\medskip

The paper is organised as follows. In \cref{sec:basics}, we fix notation, establish basic properties of the relations we investigate, and prove some lemmas that are useful later on. The rest of the paper is organised by technique. In \cref{sec:forcing_methods} we use Cohen and Mathias forcing, to prove, for example, \cref{thm:uniformization_theorem} and related results, as well as answer another question of Jockusch's: we show that an introreducible set cannot be co-hyperimmune. In \cref{sec:completeness} we prove \cref{thm:main_final_completeness_theorem}. This proof involves a technique for building introreducible sets which is also used in the proof of \cref{thm:failure_of_improving_enumerability}\ref{item:failure_left}; we therefore provide that proof in the same section. Finally, in \cref{sec:improving_enumerability} we prove \cref{thm:improving_enumerability_one_sided,thm:improving_enumerability}. The proof of \cref{thm:failure_of_improving_enumerability}\ref{item:failure_right} is relatively short but is unrelated to other proofs, and so we place it in \cref{sec:basics}.

\smallskip

As mentioned above, we regard the following as the main question which is left open:

\begin{question} \label{qn:uniorm_subset}
Does every infinite introreducible set have an infinite uniformly introreducible subset? Does every infinite introenumerable set have an infinite uniformly introenumerable subset?
\end{question}

Related is the question of whether every introenumerable set has an introreducible subset. Of course a positive answer to the second part of \cref{qn:uniorm_subset} implies a positive answer to this question. 





\section{Basics} 
\label{sec:basics}

\subsection{Notation} 
\label{sub:notation}

Our notation is fairly standard. Following Ramsey theory (and set theory in general), for a set~$A$, we let $[A]^\w$ denote the collection of all infinite subsets of~$A$, while we let $[A]^{<\w}$ denote the collection of all finite subsets of~$A$. For the most part, unless otherwise noted most of the sets that appear are infinite. For a Turing functional~$\Phi$ and a set~$X$, we write $\Phi(X)$ to denote the set computed from~$X$ using~$\Phi$; we write $\Phi(X,n)$ to denote the output of the procedure~$\Phi$ on input~$n$ with oracle~$X$. If $f,g\in \w^\w$ then we write $f\ge g$ if $f(n)\ge g(n)$ for all~$n$. If $f\in \w^\w$ and $\s\in \w^{<\w}$ then we write $\s\le f$ if $\s(n)\le f(n)$ for all $n< |\s|$. 
For any infinite set $A$, $p_A$ is the principal function of~$A$: the increasing enumeration of~$A$.

\subsection{Enumeration reducibility} 
\label{sub:enumeration}

For any binary relation~$r$ on $[\w]^\w$, we write $A \NonUniformIntro{r} B$ if $A r C$ holds for every $C\in [B]^\w$. If there is a countable family~$\+C$ of partial functions which determines~$r$ --- meaning that $A r B$ if and only if there is some $\Phi\in \+C$ such that $\Phi(B) = A$ --- then we write $A \UniformIntro{r} B$ if there is some $\Phi\in \+C$ such that $A = \Phi(C)$ for all $C\in [B]^\w$. We have already seen this notation with~$r$ being either Turing reducibility, or the relation ``c.e.\ in''. We apply it to one more reducibility, namely, enumeration reducibility~\cite{FriedbergRogers}. Recall that $A \le_e B$ if there is a procedure which outputs positive information about~$A$ using positive information about~$B$. Formally, if there is a c.e.\ collection~$\Psi$ of pairs of finite sets such that $A = \Psi(B) = \bigcup \left\{ F \,:\,  (\exists E\subseteq B)\,\,(E,F)\in \Psi  \right\}$. We call~$\Psi$ an \emph{enumeration reduction} or an \emph{enumeration functional}.  Then $A \ie B$ if~$A$ is enumeration reducible to every $C\in [B]^\w$, and $A \uie B$ if there is an enumeration reduction~$\Psi$ such that $A = \Psi(C)$ for all $C\in [B]^\w$. 

The interest in these relations stems from the following:

\begin{proposition} \label{prop:uie_and_uiS}
	For infinite sets~$A$ and~$B$, we have $A \uie B$ if and only if $A \uiS B$. 
\end{proposition}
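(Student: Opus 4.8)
The plan is to prove the two implications separately; almost all of the work is in the implication $A\uiS B \Rightarrow A\uie B$, and it rests on a single observation about deleting finite sets from~$B$. The reverse implication $A\uie B\Rightarrow A\uiS B$ is immediate: if an enumeration reduction~$\Psi$ witnesses $A\uie B$, then for every $C\in[B]^\w$ one enumerates $\Psi(C)=A$ by running~$\Psi$ against any enumeration of~$C$, and this is a single c.e.\ procedure working from every infinite subset of~$B$, so $A\uiS B$.

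For the converse I would start from the classical, index-uniform characterization of relative enumerability: $A$ is c.e.\ in~$C$ if and only if $A\le_e C\oplus\overline C$ (see~\cite{FriedbergRogers}). Feeding the single c.e.\ operator that witnesses $A\uiS B$ through this equivalence yields one enumeration reduction~$\Theta$ with $\Theta(C\oplus\overline C)=A$ for every $C\in[B]^\w$. Write a generic axiom of~$\Theta$ as $(n;E,D)$, read as ``enumerate~$n$ once $E\subseteq C$ and $D\cap C=\emptyset$ have been confirmed'', and discard the useless axioms with $E\cap D\ne\emptyset$. Then define an enumeration reduction~$\Psi$ by erasing all the negative requirements: put $(E,\{n\})\in\Psi$ for each axiom $(n;E,D)$ of~$\Theta$. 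The goal is to check that $\Psi(C)=A$ for every $C\in[B]^\w$, which gives $A\uie B$.

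Completeness, $A\subseteq\Psi(C)$, is immediate: if $n\in A=\Theta(C\oplus\overline C)$ then some axiom $(n;E,D)$ of~$\Theta$ fires, so $E\subseteq C$, and $(E,\{n\})\in\Psi$ gives $n\in\Psi(C)$. The crux — and the only place where the hypothesis that~$B$ is infinite is used — is soundness, $\Psi(C)\subseteq A$. Suppose $n\in\Psi(C)$ via an axiom $(n;E,D)$ of~$\Theta$ with $E\subseteq C$ (and $E\cap D=\emptyset$). I would test this axiom on $C':=B\setminus D$, which lies in $[B]^\w$ since~$D$ is finite and~$B$ infinite; because $E\subseteq C\subseteq B$ and $E\cap D=\emptyset$ we get $E\subseteq C'$ and $D\cap C'=\emptyset$, so the axiom fires legitimately on~$C'$ and hence $n\in\Theta(C'\oplus\overline{C'})=A$, using $A\uiS B$ once more for the infinite subset $C'\subseteq B$.

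I expect the main obstacle to be precisely this soundness step — ruling out that dropping the negative requirements from~$\Theta$ creates a false positive — and the argument above disposes of it: any infinite $C\subseteq B$ on which a ``positive-only'' axiom fires can be replaced by the infinite set $B\setminus D$ on which the \emph{original} axiom fires correctly, pinning the output inside~$A$. The remaining details — making precise the translation between ``c.e.\ in~$C$'' and an enumeration operator acting on $C\oplus\overline C$, and the bookkeeping of the axioms of~$\Theta$ — are routine.
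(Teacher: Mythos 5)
Your proof is correct and follows essentially the same strategy as the paper: forget the negative information of the witnessing operator to get an enumeration reduction, note that completeness is immediate, and establish soundness by re-testing the firing axiom on a different infinite subset of~$B$ chosen to satisfy its negative requirements (you use $B\setminus D$, the paper uses the positive part together with a tail of~$C$ — an immaterial difference). The detour through the characterization ``$A$ c.e.\ in $C$ iff $A\le_e C\oplus\overline C$'' is cosmetic; the paper works directly with relative c.e.\ operators on characteristic-function strings.
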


To avoid confusion, we will call procedures which enumerate a set using both positive and negative information from the oracle \emph{relative c.e.\ operators}.\footnote{Formally, these are c.e.\ sets of pairs $(\s,x)\in 2^{<\w}\times \w$; for such a set~$\Theta$ and $\tau\in 2^{\le \w}$, we let  $\Theta(\tau)$ be the set of $x\in \w$ for which there is some $\s\preceq \tau$ such that $(\s,x)\in \Theta$.}

\begin{proof}
	One direction is immediate: every enumeration reduction can be turned into a relative c.e.\ operator (which ignores negative information). Suppose that $A \uiS B$ via a relative c.e.\ operator~$\Theta$. Define an enumeration reduction~$\Psi$ by \emph{forgetting} the negative information: for a finite set~$F$, enumerate a number~$n$ in $\Psi(F)$ if there is some axiom in~$\Theta$ which enumerates~$n$ using an initial segment~$\s$ of an oracle, and~$F$ is the finite set $\left\{ k \,:\,  \s(k) = 1 \right\}$. To see that this works, it is clear that for any subset $C\in [B]^\w$ we have $A\subseteq \Psi(C)$. On the other hand, suppose that $n\in \Psi(C)$, given by some $F\subseteq C$ which comes from an axiom $(\s,n)$ in~$\Theta$. It is possible that $\s$ is not an initial segment of~$C$, but it is an initial segment of an infinite subset~$D$ of~$C$, and $A = \Theta(D)$, whence $n\in A$. 
\end{proof}

The transitivity of $\le_e$, in contrast with the relation ``c.e.\ in'', means that it is often easier to work with the relation $\uie$ rather than~$\uiS$. Note that \cref{prop:uie_and_uiS} implies that $A\uiT B$ if and only if there is a functional which computes~$A$ from all of $B$'s infinite subsets using only positive information; this is because $A \uiT B$ if and only if $A\oplus \overline{A} \uiS B$.

\subsection{Subsets of hyperarithmetic sets} 
\label{sub:subsets_of_hyperarithmetic_sets}

The following is the binary-relation version of Jockusch's result (from \cite{Jockusch68:Introreducible}) that every hyperarithmetic set has a uniformly introreducible subset.

\begin{lemma} \label{lem:3.9}
	Suppose that $A \uiT B$ and $B\in \Delta^1_1(A)$. Then there is some $C\in [B]^\w$ such that $C \uiT C$. 
\end{lemma}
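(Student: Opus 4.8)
The goal is: given $A \uiT B$ with $B \in \Delta^1_1(A)$, produce an infinite $C \subseteq B$ with $C \uiT C$. The plan is to build $C$ as the range of a suitable fast-growing subsequence of $p_B$, so that from any infinite $S \subseteq C$ one recovers enough of $B$ (and of the $\Delta^1_1(A)$-definition of $B$) to compute $A$, and then recover all of $C$. Let $\Phi$ be the fixed functional witnessing $A \uiT B$, i.e. $\Phi(S) = A$ for every $S \in [B]^\w$. The key observation is that any $S \in [C]^\w$ is in particular in $[B]^\w$, so $\Phi(S) = A$ automatically; the only real work is to go from $S$ (together with $A$) back to $C$ itself, uniformly.

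The main idea for the recovery step: use the hypothesis $B \in \Delta^1_1(A)$ to fix a hyperarithmetic reduction, so $B = \Gamma(A)$ for a fixed $\Delta^1_1$ functional $\Gamma$. The construction of $C$ proceeds in stages, choosing elements $c_0 < c_1 < c_2 < \cdots$ of $B$ recursively: at stage $n$ we will have committed to $c_0, \dots, c_{n-1}$, and we choose $c_n \in B$ large enough to "code" the initial segment $B \upharpoonright_{c_{n-1}+1}$ — for instance, by spacing so that the gap between $c_{n-1}$ and $c_n$ (say, the number of elements of $B$ strictly between them, or just the value $c_n$ read off via $p_B$) encodes $B \upharpoonright_{c_{n-1}}$. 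Then, given any infinite $S \subseteq C$, enumerate two elements $c_i < c_j$ of $S$; since $c_i, c_j \in C \subseteq B$ and $C$ was built by a fixed rule from $B$, the data "$c_i, c_j$ are consecutive-or-not in $C$ and their positions" lets us decode a genuine initial segment of $B$ of length going to infinity as we take larger pairs from $S$. Running this over all of $S$ recovers $B$ itself (as a limit), hence $A = \Gamma^{-1}$-style... more carefully, we already have $A = \Phi(S)$ directly, and we use $A$ plus $\Gamma$ to recompute $B$, and then from $B$ we recompute $C$ by replaying the construction rule. All of this is uniform in $S$: the functional first computes $A = \Phi(S)$, then $B = \Gamma(A)$, then replays the fixed recursive definition of the $c_n$'s from $B$, outputting $C$. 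That single functional works for every $S \in [C]^\w$, giving $C \uiT C$.

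The subtlety — and the step I expect to be the main obstacle — is ensuring that the construction of the $c_n$'s can actually be carried out, and in particular that it is consistent with $C \subseteq B$ while still letting arbitrary infinite subsets of $C$ decode $B$. The point is that the decoding from a pair $c_i, c_j \in S$ must not depend on $i, j$ being the "right" indices; whatever two elements of $C$ we see, we must be able to tell (from the elements themselves, using $B$ which we are trying to reconstruct — so really from the positions within $C$ reconstructed so far) which initial segment of $B$ is being coded. The clean way to arrange this is a bootstrapping/monotonicity scheme: make the $c_n$ so sparse that $c_n > $ (some canonical bound computed from $B \upharpoonright_{c_{n-1}+1}$), so that seeing any element $c \in C$ tells us "$c = c_n$ for the unique $n$ with [explicit inequality]" and the value $c$ itself then reveals $B \upharpoonright_{c_{n-1}+1}$; iterating, a single large element of $S$ already gives an arbitrarily long initial segment of $B$, and taking the sup over $S$ gives all of $B$. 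One must check this recursion is legitimate — it is, since $B$ is infinite so there is always room to pick $c_n$ as large as needed, and the rule for $c_n$ given $c_0, \dots, c_{n-1}$ is arithmetic in $B$ hence in particular well-defined — and that the resulting decoding functional is total and correct on every infinite subset of $C$, which is exactly the monotone/limit argument sketched above. The hypothesis $A \uiT B$ is used only to supply $A$ cheaply from $S$; the hypothesis $B \in \Delta^1_1(A)$ is what lets the reconstruction of $B$ from $A$ (and hence of $C$) be a single fixed functional rather than merely "$C$ computes $C$ non-uniformly."
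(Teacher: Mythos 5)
There is a genuine gap at the central step, namely the reconstruction of $B$ from $A$. You write that since $B\in\Delta^1_1(A)$ there is a ``fixed $\Delta^1_1$ functional $\Gamma$'' with $B=\Gamma(A)$, and your decoding functional is supposed to compute $A=\Phi(S)$ and then $B=\Gamma(A)$. But a $\Delta^1_1(A)$ definition is not a Turing reduction to $A$: it only gives $B\le_\Tur A^{(\alpha)}$ for some $A$-computable ordinal $\alpha$, and in general no Turing functional applied to the oracle $A$ alone outputs $B$. So your functional never actually recovers $B$ (hence never recovers $C$) from $S$; the subset $S$ itself must supply the extra computational power needed to climb from $A$ up to $A^{(\alpha)}$, and nothing in your construction arranges this. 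The paper's proof does exactly that: $A^{(\alpha)}$ has a uniform self-modulus $f$ relative to $A$ (a functional $\Psi$ with $\Psi(A,g)=A^{(\alpha)}$ for every $g\ge f$), and one takes $C$ to be the set of $f(n)\tth$ elements of $B$, so that $p_C\ge f$ and hence $p_D\ge f$ for every infinite $D\subseteq C$. Then $D$ uniformly computes $A=\Phi(D)$, then $A^{(\alpha)}=\Psi(A,p_D)$, and then $B$ and $C$. The sparseness of $C$ is used to \emph{dominate a modulus}, not to code.

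Your coding scheme also fails as described. The elements $c_n$ must be drawn from $B$, so you cannot choose their values to encode $B\upharpoonright_{c_{n-1}+1}$; all you can guarantee is that $c_n$ exceeds some bound computed from that initial segment, and ``exceeding the bound'' is not invertible: many candidate initial segments have bound below $c_n$, and the decoder cannot tell which one is correct without already knowing $B$. (This is precisely why the Dekker-set trick is unavailable when the set being built is constrained to lie inside a prescribed $B$.) Once the coding idea is replaced by modulus domination, the remainder of your outline --- compute $A$ from $S$ via $\Phi$, recover $B$, replay the definition of $C$ --- does match the paper's argument.
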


The proof gives $C \in \Delta^1_1(A)$.

\begin{proof}
	Let $\alpha < \w_1^A$ be an $A$-computable ordinal such that $B \le_\Tur A^{(\alpha)}$. The set $A^{(\alpha)}$ has a uniform self-modulus relative to~$A$: there is a function $f\equiv_\Tur A^{(\alpha)}$ and a functional~$\Phi$ such that for every $g\ge f$, $\Phi(A,g)= A^{(\alpha)}$. Let $C\subseteq B$ be the subset whose principal function is $p_B\circ f$, that is, the collection of $f(n)\tth$ elements of~$B$ (for $n\in \w$). Then $C\le_\Tur A^{(\alpha)}$. Any infinite $D\subseteq C$ can uniformly compute~$A$, and its principal function majorizes~$f$, and so using~$\Phi$, $D$ can (uniformly) compute $A^{(\alpha)}$, and hence~$C$. 
\end{proof}

\subsection{Proof of \cref{thm:failure_of_improving_enumerability}\ref{item:failure_right}} 
\label{sub:failure_right}

Toward proving \cref{thm:failure_of_improving_enumerability}\ref{item:failure_right}, we first observe a positive-sided version of Solovay's result \cite{Solovay78} that the hyperarithmetic sets are those sets with uniform moduli. The following proposition says that a set is $\Pi^1_1$ if and only if it has a \emph{uniform c.e.\ modulus} (which is a function $f$ as in (1) below).

\begin{proposition} \label{prop:c.e._moduli}
	The following are equivalent for a set $A\subseteq \w$:
	\begin{equivalent}
		\item There is a function $f\in \w^\w$ and a relative c.e.\ operator~$\Theta$ such that for all $g\ge f$, $\Theta(g) = A$. 
		\item $A$ is $\Pi^1_1$. 
	\end{equivalent}
\end{proposition}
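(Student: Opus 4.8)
The plan is to establish both implications directly. For the direction $(1) \Rightarrow (2)$, suppose $f$ and $\Theta$ are as in (1). I want to express membership in $A$ by a $\Pi^1_1$ formula. The key observation is that $n \in A$ if and only if $\Theta(g,n)$ converges for \emph{every} $g \ge f$ --- equivalently, $n \notin A$ if and only if there exists some $g \ge f$ with $n \notin \Theta(g)$. Since $\Theta$ is a relative c.e.\ operator, ``$n \in \Theta(g)$'' is $\Sigma^0_1$ in $g$; and ``$g \ge f$'' quantifies a function and then asserts an arithmetic condition, so ``$\exists g\, (g \ge f \andd n \notin \Theta(g))$'' is $\Sigma^1_1$ once $f$ is fixed as a parameter. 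The subtle point is eliminating the parameter $f$: we need $A$ itself to be $\Pi^1_1$, not merely $\Pi^1_1(f)$. Here I would use the standard trick that $f$ is not really needed as an oracle, only as a lower bound: $n \notin A$ iff there is a finite string $\s$ such that $\Theta(\s, n)$ is undefined-but-could-still-be-extended appropriately --- more carefully, iff there is an infinite $g$ and a ``guess'' that $g \ge f$ which is self-consistent. Actually the cleanest route is: since the conclusion $\Theta(g) = A$ must hold for \emph{all} $g \ge f$, and in particular $A$ does not depend on which such $g$ we pick, we can equivalently say $n \in A$ iff for all $g$, if $\Theta(g)$ is ``total and monotone-stable'' in the appropriate sense (a $\Pi^0_2(g)$ condition capturing that $g$ dominates \emph{some} valid modulus) then $n \in \Theta(g)$. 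I expect this parameter-elimination step to be the main obstacle, and I would model it on the proof of \Cref{prop:c.e._moduli}'s hyperarithmetic analogue (Solovay's theorem); the point is that the family of valid moduli is cofinal in $\le$, so one can quantify over candidates arithmetically.

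For the direction $(2) \Rightarrow (1)$, suppose $A$ is $\Pi^1_1$. By the normal form theorem, there is a computable function $n \mapsto T_n$ assigning to each $n$ a computable tree $T_n \subseteq \w^{<\w}$ such that $n \in A$ iff $T_n$ is well-founded. I would then use the standard correspondence between well-foundedness and moduli: for a well-founded computable tree $T$, there is a canonical rank function, and the ranks of all nodes, suitably coded, give a modulus --- any function growing fast enough to ``witness'' the well-foundedness of $T_n$ (for all $n \in A$ simultaneously) lets a c.e.\ operator enumerate exactly the $n$ for which $T_n$ is well-founded. Concretely: let $f$ be a function such that for every $n \in A$, $f$ dominates the ``leftmost-branch-bound'' or the rank-witnessing function of $T_n$; such an $f$ exists because $A$ being $\Pi^1_1$ means $\{T_n : n \in A\}$ is a (lightface) collection of well-founded trees, and one can take $f$ to majorize a single function computing all the relevant Kleene--Brouwer or rank data (this uses that the hyperjump, or an appropriate $\Pi^1_1$-complete set, computes such bounds). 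Then define $\Theta$ so that $\Theta(g, n)$ fires when $g$'s values certify that $T_n$ has no branch of length witnessed by $g$ --- i.e., $\Theta$ searches, using the positive information in $g$, for a finite subtree of $T_n$ closed under the bounds given by $g$ that exhausts $T_n$. For $g \ge f$ this correctly enumerates $A$; and it is genuinely a \emph{relative} c.e.\ operator (it may use negative information about $g$, i.e.\ that certain values are small) rather than a mere enumeration operator, which is why we get $\Pi^1_1$ and not $\Sigma^0_1$-flavored bounds.

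A cleaner packaging of $(2)\Rightarrow(1)$, which I would probably adopt in the write-up: by Spector--Gandy or just by the fact that $\mathcal{O}$ (Kleene's $\mathcal{O}$) is $\Pi^1_1$-complete, it suffices to produce a uniform c.e.\ modulus for $\mathcal{O}$, or more simply to observe that ``$A$ is $\Pi^1_1$'' means $A \le_e (\text{something})$ --- but the most self-contained path is the tree argument above. I would then note, as the excerpt's surrounding text suggests, that this proposition is the ``positive-information'' or ``c.e.-operator'' analogue of Solovay's characterization of hyperarithmetic sets via uniform moduli, with $\Delta^1_1$ replaced by $\Pi^1_1$ and ordinary Turing moduli replaced by relative c.e.\ moduli; the proofs run in parallel, with the asymmetry ($\Pi^1_1$ rather than $\Delta^1_1$) coming precisely from the one-sidedness of enumeration/c.e.\ operators. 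The main thing to get right is that both the modulus $f$ and the operator $\Theta$ are genuinely lightface (no real parameters), which in the $(1)\Rightarrow(2)$ direction requires the cofinality argument for eliminating $f$, and in the $(2)\Rightarrow(1)$ direction requires that the rank/branch bounds can be assembled lightface-uniformly over all $n \in A$.
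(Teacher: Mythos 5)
Your proposal has genuine gaps in both directions, and neither is quite resolved.

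For $(2)\Rightarrow(1)$: you construct the modulus $f$ from bounds associated with the trees $T_n$ for $n\in A$, i.e.\ the \emph{well-founded} trees, via ``leftmost-branch-bound or the rank-witnessing function''. This is the wrong set of trees. Well-founded trees have no branches, and their ranks are ordinals, not natural numbers, so no function $f\in\w^\w$ ``witnesses'' well-foundedness in any direct sense; the $\Sigma^0_1(g)$ check of well-foundedness has to come from compactness, not from a bound on ranks. What the paper does is the mirror image: for each $n\notin A$ fix an infinite branch $f_n\in[T_n]$, and take $f$ to dominate all the $f_n$ (suitably shifted). Then for $g\ge f$ the set $\mathcal{C}_n=\{h:\forall x\,[h(x)\le g(x+n)]\}$ is effectively compact relative to $g$ and still contains $f_n$ whenever $T_n$ is ill-founded; so $n\in A$ iff $[T_n]\cap\mathcal{C}_n=\emptyset$, a $\Sigma^0_1(g)$ fact. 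Your description of $\Theta$ (search for a finite $g$-bounded subtree that ``exhausts'' $T_n$) has the right flavour---finiteness inside a compact window---but it only becomes correct once $f$ is chosen to bound paths through the ill-founded trees, not data attached to the well-founded ones.

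For $(1)\Rightarrow(2)$: you correctly identify parameter elimination as the obstacle, but the route you sketch does not close it. Your candidate formula quantifies over all $g$ and tests a side condition meant to capture ``$g$ dominates some valid modulus''. Any faithful version of that condition quantifies over the functions $g$ is supposed to dominate, or over the functions $g'\ge g$ it must agree with, and is therefore at least $\Pi^1_1(g)$; placed under $\forall g$ this lands you at $\Pi^1_2$, not $\Pi^1_1$. There is no $\Pi^0_2(g)$ condition of the kind you posit. The paper avoids testing $g$ at all: it writes
\[
x\in A \;\iff\; (\forall h\in\w^\w)\,(\exists \sigma\in\w^{<\w})\,\bigl[\sigma\ge h \andd x\in\Theta(\sigma)\bigr],
\]
which is visibly $\Pi^1_1$ with no parameter. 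Soundness in one direction takes any $h$, picks $g\ge\max(h,f)$, and reads off a finite $\sigma\prec g$ certifying $x\in\Theta(\sigma)$ with $\sigma\ge h$; in the other, take $h=f$, get $\sigma\ge f$ with $x\in\Theta(\sigma)$, extend to $g\ge f$, and use monotonicity of the c.e.\ operator to get $x\in\Theta(g)=A$. The cofinality of the valid moduli is used only inside the correctness argument, never inside the formula, which is what makes the definition lightface.

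Your general remarks (this is a one-sided, $\Pi^1_1$ analogue of Solovay's characterisation, the asymmetry comes from the c.e.\ operator, and $\Theta$ genuinely uses negative as well as positive information about $g$) are all correct, but the two technical pivots above need to be fixed for the proof to go through.
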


\begin{proof}
	$(2)\Rightarrow(1)$: As $A$ is $\Pi^1_1$, there is a uniformly computable sequence $(T_n)_{n \in \omega}$ of trees in Baire space such that $T_n$ is well-founded if and only if $n \in A$.  For each $n \notin A$, fix a path $f_n \in [T_n]$.  Let
\[
f(x) = \sum_{\substack{n \not \in A\\n \le x}} f_n(x-n).
\]
We claim that $f$ is a uniform c.e.\ modulus for~$A$, i.e., as required for~(1). 

Given $g \ge f$, let $\mathcal{C}_n = \{ h \in \omega^\omega \,:\, \forall x\, [h(x) \le g(x+n)]\}$.  Then for each ill-founded~$T_n$, $f_n(x) \le f(x+n) \le g(x+n)$, so $f_n \in \mathcal{C}_n$.  Further, $\mathcal{C}_n$ is effectively compact relative to~$g$.  So $n \in A$ if and only if $[T_n] = \emptyset$ if and only if $[T_n]\cap \mathcal{C}_n = \emptyset$, and the latter is a $\Sigma^0_1(g)$ question.  Finally, this process is uniform, so~$f$ is a uniform c.e.\ modulus for $A$.

\smallskip

$(1)\Leftarrow(2)$: Suppose there is some~$f$ and  a relative c.e.\ operator~$\Theta$ such that $A = \Theta(g)$ for all $g \ge f$.  Then the following is a $\Pi^1_1$ description of $A$:
\[
x \in A \iff (\forall h \in \omega^\omega) (\exists \sigma \in \omega^{<\omega}) [ (\sigma \ge h) \andd (x \in \Theta(\sigma))].
\]

If $x \in A$, then for any~$h$, fix a~$g$ with $g \ge h$ and $g \ge f$.  Then $A = \Theta(g)$ by assumption, and so there is some $\sigma \prec g$ with $x \in \Theta(\sigma)$, and thus $x$ satisfies the righthand side.

If $x$ satisfies the righthand side, then in particular it satisfies it for $h = f$.  Pick a witnessing~$\sigma$, and extend it to a function $g \ge f$.  Then $x \in \Theta(\sigma) \subseteq \Theta(g) = A$.
\end{proof}

The proof of \cref{thm:failure_of_improving_enumerability}\ref{item:failure_right} relies on the notion of {computable encodability} investigated by Solovay \cite{Solovay78}. A set~$A$ is \emph{computably encodable} if every infinite set has a subset which computes~$A$. Every hyperarithmetic set~$A$ has a modulus, which implies that it is computably encodable (given any set, thin it to a subset sufficiently sparse so that its principal function majorizes the modulus of~$A$). Solovay showed that the computably encodable sets are precisely the hyperarithmetic ones. 

\begin{proof}[Proof of \cref{thm:failure_of_improving_enumerability}\ref{item:failure_right}]
	Fix any $\Pi^1_1$ set $A$ which is not~$\Delta^1_1$ (for example Kleene's~$\+O$). Then by \cref{prop:c.e._moduli}, there is a function $f \in \w^\w$ and a relative c.e.\ operator~$\Theta$ such that $A = \Theta(g)$ for every $g \ge f$. Since $A \not \in \Delta^1_1$, as mentioned, it is not computably encodable: there is a set $D \in [\omega]^\omega$ such that for every $B \in [D]^\omega$, $B \nge_\Tur A$. Let $B \in [D]^\omega$ be such that $p_B \ge f$. Then $A \uiS B$, but $A\nle_\Tur B$, so certainly $A \niT B$. 
\end{proof}

Note that in light of \cref{thm:improving_enumerability_one_sided}\ref{item:noncollapsing}, it is not surprising that the sets~$A$ and~$B$ produced are quite complicated.


\section{Forcing methods} 
\label{sec:forcing_methods}

\subsection{Cohen subsets of an infinite set} 
\label{sub:cohen_subsets_of_an_infinite_set}

Let~$A$ be an infinite set. We let $\PP_A$ be the collection of all Cohen conditions $\s\in 2^{<\w}$ which are characteristic functions of finite subsets of~$A$, ordered by extension. For our notation, it will be convenient to identify finite nonempty sets with finite binary strings as follows: a set~$F$ is identified with its characteristic function of length $\max F + 1$. Thus, for finite sets~$E$ and~$F$, we write $E\preceq F$ if~$F$ is an \emph{end-extension} of~$E$: $E\subseteq F$ and $\min (F\setminus E) > \max E$. The collection of all Cohen conditions which correspond to finite subsets of~$A$ is dense in~$\PP_A$. 

It is clear that since~$A$ is infinite, a sufficiently generic filter gives (a characteristic function of) an infinite subset~$G$ of~$A$. 

\subsubsection{Transitivity of introreduction relations} 
It is clear that the relations $\iT$ and $\uiT$ are transitive; indeed if $A \le_\Tur B$ and $B \iT C$ then $A \iT C$, and the same holds for $\uiT$. \Cref{prop:uie_and_uiS} implies that the relation~$\uiS$ is transitive as well. The relation ``c.e.\ in'' is not transitive; nonetheless, we can prove:

 \begin{theorem} \label{thm:transitivity_for_ie}
 	The relation $\iS$ is transitive. 
 \end{theorem}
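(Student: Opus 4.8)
The claim is that $\iS$ is transitive: if $A \iS B$ and $B \iS C$, then $A \iS C$. Unpacking the definitions, $A \iS B$ means $A$ is c.e. relative to every infinite subset of $B$, and $B \iS C$ means $B$ is c.e. relative to every infinite subset of $C$. We want: $A$ is c.e. relative to every infinite subset $S$ of $C$. The naive attempt — take $S \in [C]^\omega$, get $B$ c.e. in $S$, then... $A$ c.e. in $B$? — fails immediately because "c.e. in" is not transitive, and moreover $A \iS B$ only tells us about *infinite subsets* of $B$, not about $B$ itself (though actually $B \in [B]^\omega$, so $A$ is c.e. in $B$; the real failure is the non-transitivity of "c.e. in"). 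So the key idea must be to use a subset of $B$ rather than $B$ itself.

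**The plan.** Fix $S \in [C]^\omega$. We know $B$ is c.e. relative to $S$; fix a relative c.e. operator (in the paper's sense — a c.e. set of pairs $(\sigma, x) \in 2^{<\omega} \times \omega$) witnessing $B = \Theta(S)$. The idea is to build, working computably in $S$, an infinite subset $T$ of $B$, and simultaneously arrange that $A$ is c.e. in $S$ by using that $A$ is c.e. relative to $T$. The subtlety is that we cannot in general produce *from $S$* an infinite subset of $B$ that is itself computable in $S$ — $B$ is only c.e. in $S$, so a subset of $B$ need not be $S$-computable. Instead I would enumerate (c.e. in $S$) a set $T \subseteq B$; since $T$ will be infinite and $A \iS B$, $A$ is c.e. relative to $T$. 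But "c.e. relative to a c.e. set" *is* transitive-friendly: if $T$ is c.e. in $S$ and $A$ is c.e. in $T$ via a relative c.e. operator, then — because $T \subseteq B$ and the relative c.e. operator for $A \iS B$ may use negative information about $T$ — we need to be careful. The clean move: use a relative c.e. operator for $A$ from $T$ that only reads *positive* information about $T$ (equivalently, an enumeration reduction), so that an $S$-enumeration of $T$ yields an $S$-enumeration of $A$. We can force this: since $A$ is c.e. relative to *every* infinite subset of $B$, and $T$ will be one such subset, $A$ is c.e. in $T$; but to feed an enumeration of $T$ into it, we want to know $A \le_e T$. This need not hold for a single $T$. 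The resolution is to choose $T$ so that $T \equiv_e T \oplus \overline{T}$ relative to $S$ — e.g. make $T$'s complement-within-some-$S$-computable-superset also enumerable, i.e. make $T$ itself $S$-computable after all, by thinning.

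**Refining: make the intermediate subset $S$-computable.** Here is the cleaner route. Given $S \in [C]^\omega$ and the relative c.e. operator $\Theta$ with $B = \Theta(S)$: enumerate $\Theta(S)$ and, along the way, build an infinite $S$-computable set $T$ with the property that every element enumerated into $T$ lies in $B$. Concretely, I would go through elements $b_0 < b_1 < \cdots$ as they appear in the enumeration of $B = \Theta(S)$, and put $b_i$ into $T$; the point is that $T$ is merely the range of an $S$-computable enumeration — wait, that range is only c.e. in $S$, not $S$-computable. To actually get an $S$-\emph{computable} infinite subset of $B$, I do not think that is possible in general. So the honest plan is the enumeration-reduction one: $T$ is c.e. in $S$ (an infinite subset of $B$), hence $A$ is c.e. relative to $T$; and crucially, since $A \iS B$, $A$ is c.e. relative to $T$ *by some relative c.e. operator*, and we only have an enumeration of $T$ from $S$. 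To bridge this, note $A$ is also c.e. relative to $T \oplus \overline T$; but we don't have $\overline T$. The fix: build $T$ as a *retraceable-style* subset where from any enumeration we can recover membership — or more simply, invoke that $A \iS B$ gives $A$ c.e. in $T$, then observe $T$ c.e. in $S$ means $T = \Psi(S)$ for a relative c.e. operator $\Psi$; composing, $A$ is c.e. in... no, composition fails for relative operators.

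**What I expect to be the main obstacle, and how to get around it.** The crux is exactly the non-transitivity of "c.e. in," and the construction must exploit that $A \iS B$ quantifies over *all* infinite subsets of $B$ — giving us the freedom to choose a particularly nice $T$. I believe the right choice is: build $T$ so that it is *both* c.e. in $S$ *and* co-c.e. in $S$ relative to a fixed $S$-computable superset, i.e. $S$-computable — and this *is* achievable if we are cleverer: instead of taking all of $B$, thin $B$ to a sparse subset whose principal function we control. But we can't compute $B$'s principal function from $S$. At this point the genuinely correct argument (which I would write out carefully) is: let $\Theta$ witness $B$ c.e. in $S$; define $T = \{b : b \in \Theta(S) \text{ and } b \text{ is enumerated before any } b' < b \text{ with } b' \notin \Theta(S)\}$ — no. Let me instead commit to the enumeration-operator approach as the skeleton: since $A \iS B$, for *every* $D \in [B]^\omega$, $A$ is c.e. in $D$, equivalently (taking $D \oplus \overline D$) $A \le_e D \oplus \overline D$. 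Now let $T$ be any infinite subset of $B$ that is c.e. in $S$ — say $T$ is the set enumerated by running $\Theta(S)$ and outputting elements in increasing-discovered order — so that $T \oplus \overline T \le_e$ ... we still need $\overline T$.

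**Final committed plan.** I will structure the proof as follows. (1) Reduce, via \cref{prop:uie_and_uiS}-style reasoning, to enumeration operators where possible. (2) Fix $S \in [C]^\omega$. Since $B \iS C$, in particular $B$ is c.e. in $S$; fix an enumeration of $B$ computable in $S$ — but crucially, I will *not* use all of $B$. Instead: (3) Build, $S$-computably in stages, a nested sequence of finite sets $F_0 \subseteq F_1 \subseteq \cdots$ with $\bigcup F_s = T$, where at each stage we only *commit* an element $b$ to $T$ once we are confident (from the enumeration of $B$) it is genuinely in $B$ — but since enumeration gives no confidence, the honest statement is that $T$ is merely c.e. in $S$. (4) Then $T \in [B]^\omega$, so $A$ is c.e. in $T$; and since we have an $S$-enumeration of $T$, *if* the reduction $A$-from-$T$ is an enumeration reduction we are done. (5) To guarantee step (4)'s hypothesis, observe that $A \iS B$ actually gives more via Proposition~\ref{prop:uie_and_uiS}-type equivalence only in the uniform case; in the non-uniform case we instead argue: the set $T$ we build can be taken to be c.e. in $S$ with $\overline{T}$ also c.e. in $S$ (relative to the superset $B$, whose enumeration we have) — no.

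I will therefore present the proof along these lines, flagging that the heart of the matter is constructing an infinite $T \subseteq B$ that is $\Delta$ in $S$ in a suitable sense so that "c.e. relative to $T$" upgrades to "c.e. relative to $S$", and I expect the main difficulty — and the place where the construction is genuinely clever — to be this choice of $T$, exploiting the universal quantifier over $[B]^\omega$ in the hypothesis $A \iS B$.
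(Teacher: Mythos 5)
Your proposal identifies the correct obstacle --- the non-transitivity of ``c.e.\ in'' --- but it never resolves it, and the route you keep circling back to (build a single infinite $T\subseteq B$ that is $S$-computable, or at least ``$\Delta$ enough'' in $S$ that an $S$-enumeration of $T$ suffices) is a dead end. You acknowledge this yourself at several points (``I do not think that is possible in general''), and the ``final committed plan'' ends by flagging the crucial step as still open rather than supplying it. So as written this is a gap, not a proof.

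The idea you are missing is that one should not look for a single infinite $T\in[B]^\omega$ at all. Instead, force over the finite subsets of $B$: let $\PP_B$ be the poset of finite subsets of $B$ ordered by end-extension. Since $A\iS B$, every sufficiently generic $G\subseteq B$ enumerates $A$; so there is a \emph{finite} condition $\sigma\in\PP_B$ and a relative c.e.\ operator $\Theta$ with $\sigma\Vdash\Theta(G)=A$. This forcing fact packages exactly the two properties you need, stated at the level of finite conditions rather than infinite subsets:
\begin{itemize}
\item for every $n\in A$ there is a finite $F\in\PP_B$ with $\sigma\preceq F$ and $n\in\Theta(F)$; and
\item for every finite $F\in\PP_B$ with $\sigma\preceq F$, $\Theta(F)\subseteq A$.
\end{itemize}
Now fix $X\in[C]^\omega$. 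Since $B\iS C$, $B$ is c.e.\ in $X$, hence the collection $\{F\in[B]^{<\omega}:\sigma\preceq F\}$ is $X$-c.e.\ (this is where the hypothesis on $C$ is used, and it only requires \emph{positive} information about $B$, which is all $X$ can supply). Therefore
\[
A \;=\; \bigcup\bigl\{\,\Theta(F)\;:\;F\in[B]^{<\omega},\ \sigma\preceq F\,\bigr\}
\]
is c.e.\ in $X$. This sidesteps entirely the question of whether $X$ can compute (or even correctly enumerate) an infinite subset of $B$ with good closure properties: the universal quantifier in $A\iS B$ is exploited through genericity to obtain a single finite $\sigma$ and a single $\Theta$ that are ``correct on every finite extension,'' and then the merely-c.e.\ access to $B$ from $X$ is exactly enough to run $\Theta$ on all such extensions. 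Without the forcing step, one only knows $\Theta$ is correct on one particular infinite subset, and --- as you found repeatedly --- that cannot be leveraged from an $X$-enumeration of $B$ alone.
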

 
\begin{proof}
	Suppose that $A \iS B \iS C$. Since every generic $G\subset B$ enumerates~$A$, there is a condition $\s\in \PP_B$ and a relative c.e.\ operator~$\Theta$ such that $\s \force\Theta(G)=A$. This means:
	\begin{itemize}
		\item For every $n\in A$ there is some $\tau\in \PP_B$ extending~$\s$ such that $n\in \Theta(\tau)$; and
		\item For every~$\tau$ extending~$\s$, $\Theta(\tau)\subseteq A$. 
	\end{itemize}
	Let $X$ be a subset of~$C$. Then $B$ is c.e.\ in~$X$. Of course this means that the collection of finite subsets of~$B$ is $X$-c.e. We can then enumerate~$A$ using~$X$ as follows: 
		\[
			A = \bigcup \left\{ \Theta(F) \,:\,  F\in [B]^{<\w} \andd \s\preceq F \right\}. \qedhere
		\]
\end{proof}

The following generalises \cite[Theorem 5.3]{Jockusch68:Introreducible} (which also uses a finite extension argument):

\begin{corollary} \label{thm:transitivity_for_iT_and_ie}
	If $A \iT B$ and $B \iS C$ then $A \iT C$. 
\end{corollary}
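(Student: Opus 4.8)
The goal is to show that $A \iT B$ and $B \iS C$ together imply $A \iT C$. The plan is to mimic the finite-extension argument used in the proof of \cref{thm:transitivity_for_ie}, but on the Turing side. First I would unpack the hypothesis $A \iT B$ using Cohen forcing on $\PP_B$: since every sufficiently generic $G \subseteq B$ computes $A$, there is a condition $\s \in \PP_B$ and a Turing functional $\Phi$ with $\s \force \Phi(G) = A$. Concretely this means: for every $\tau \in \PP_B$ extending $\s$ and every $n$, if $\Phi(\tau, n)\converge$ with some value, that value is $A(n)$; and for every $n$ there is some $\tau \succeq \s$ in $\PP_B$ with $\Phi(\tau, n)\converge$. (Here I should be a little careful about what "forcing $\Phi(G) = A$" means — I want the standard fact that the generic $G$ computes $A$ via $\Phi$ for a dense set of conditions, and then fix one such condition $\s$.)

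Now let $X \in [C]^\w$ be arbitrary; I must show $X$ computes $A$. Since $B \iS C$, we have that $B$ is c.e.\ in $X$, hence the collection $[B]^{<\w}$ of finite subsets of $B$ is $X$-c.e. To compute $A(n)$ from $X$: search, using the $X$-enumeration of $[B]^{<\w}$, for a finite $F \in [B]^{<\w}$ with $\s \preceq F$ (in the end-extension sense) such that $\Phi(F, n)\converge$. Such an $F$ exists because there is some $\tau \succeq \s$ in $\PP_B$ with $\Phi(\tau, n)\converge$, and any sufficiently long initial segment of the use, extended into $B$, gives such an $F$ — more carefully, $\tau$ itself corresponds to a finite subset of $B$ end-extending $\s$, so we can just take $F$ to be (the set coded by) $\tau$. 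When we find such an $F$, output $\Phi(F, n)$; by the forcing property this equals $A(n)$. This procedure is total because a witness always exists, and correct because every converging computation below a condition extending $\s$ gives the right answer. Hence $X \ge_\Tur A$, and since $X$ was arbitrary, $A \iT C$.

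The main subtlety — really the only place one must be slightly careful — is the interface between "finite subsets of $B$ coded as end-extensions of $\s$" and "conditions in $\PP_B$": I want to make sure that the witnesses $\tau \succeq \s$ guaranteed by the forcing fact are exactly objects that show up in the $X$-enumeration of $[B]^{<\w}$, and that the end-extension convention (so that $\s$ genuinely sits below them as a Cohen condition) is respected. This is handled by the identification of finite nonempty sets with binary strings fixed in \cref{sub:cohen_subsets_of_an_infinite_set}, and is no harder than the corresponding point in the proof of \cref{thm:transitivity_for_ie}; indeed the whole argument is the literal Turing-reducibility analogue of that proof, with $\Theta(F) \subseteq A$ replaced by "$\Phi(F,n)\converge \Then \Phi(F,n) = A(n)$" and the union replaced by a search. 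I do not anticipate any genuine obstacle beyond getting these bookkeeping conventions stated cleanly.
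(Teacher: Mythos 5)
Your proof is correct, but it takes a different route than the paper's, which is a one-liner. The paper observes that $A \iT X$ is equivalent to $A \oplus \overline{A} \iS X$ (Turing reducibility to $X$ is just ``c.e.\ in $X$'' for both $A$ and its complement), so the corollary is an immediate consequence of \cref{thm:transitivity_for_ie}: $A \oplus \overline{A} \iS B \iS C$ gives $A \oplus \overline{A} \iS C$, i.e., $A \iT C$. You instead inline a Turing-level version of the forcing argument from the proof of \cref{thm:transitivity_for_ie}: fix $\s \in \PP_B$ and a functional $\Phi$ with $\s \force \Phi(G) = A$, then from $X \in [C]^\w$ search (using the $X$-enumerability coming from $B \iS C$) for a condition $\tau \succeq \s$ with $\Phi(\tau,n)\converge$ and output $\Phi(\tau,n)$. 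Both approaches are sound; the paper's is shorter and keeps the forcing work confined to the theorem, while yours is self-contained and spells out the actual decision procedure. One small cleanup to your version: the $X$-c.e.\ search should range over Cohen strings $\tau$ with $\{i : \tau(i)=1\} \subseteq B$, which is an $X$-c.e.\ collection since $B$ is c.e.\ in $X$, rather than literally over finite sets $F$ in the canonical coding of length $\max F + 1$, because the witness $\tau$ provided by the forcing fact need not be canonical. You flag this bookkeeping yourself, and it is indeed the same as in the proof of \cref{thm:transitivity_for_ie}, so no genuine gap.
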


\begin{proof}
	As mentioned above, $A \iT X$ if and only if $A\oplus \overline{A} \iS X$.
\end{proof}


Suppose that $B \iS C$  but that $A \niT C$. The proof of \cref{thm:transitivity_for_ie} shows that for no Turing functional~$\Phi$ is there a condition $\s\in \PP_B$ which forces that $\Phi(G)=A$. Computably in~$A\oplus B'$ we can then construct an infinite $G\subset B$ which for every~$\Phi$, either some $\s\prec G$ forces that $\Phi(G)$ is partial, or forces some disagreement between $\Phi(G)$ and~$A$. Thus, if $B \iS C$ but $A \niT C$ then there is some $G \subseteq B$ infinite with $G  \le_\Tur A\oplus B'$ such that $A \nle_\Tur G$. 
By taking $A = B = C$, we obtain:

\begin{theorem} 
	If $A$ is introenumerable, then $A$ is introreducible if and only if $A$ is computable from all of its infinite subsets which are $\Delta^0_2(A)$. 
\end{theorem}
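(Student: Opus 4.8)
The plan is to derive this from the discussion immediately preceding it, which established: if $B \iS C$ but $A \niT C$, then there is an infinite $G \subseteq B$ with $G \le_\Tur A \oplus B'$ and $A \nle_\Tur G$. Setting $A = B = C$ (using that $A$ is introenumerable, so $A \iS A$), this says: if $A$ is introenumerable but \emph{not} introreducible, then there is an infinite $G \subseteq A$ with $G \le_\Tur A'$ and $A \nle_\Tur G$. Since $G \le_\Tur A'$ and $A'$ is $\Delta^0_2(A)$, the set $G$ is itself $\Delta^0_2(A)$ (being computable from $A'$), so $G$ is an infinite $\Delta^0_2(A)$ subset of $A$ from which $A$ is not computable. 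Contrapositively, if $A$ is computable from all of its $\Delta^0_2(A)$ infinite subsets, then $A$ is introreducible.

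The converse direction is trivial: if $A$ is introreducible, then by definition it is computable from \emph{all} of its infinite subsets, in particular from all of its $\Delta^0_2(A)$ infinite subsets. So the equivalence follows once we record the forward direction above.

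First I would recall explicitly the sentence ``if $B \iS C$ but $A \niT C$ then there is some $G \subseteq B$ infinite with $G \le_\Tur A\oplus B'$ such that $A \nle_\Tur G$,'' and then instantiate it at $A = B = C$, noting that the hypothesis $A \iS A$ is exactly the assumption that $A$ is introenumerable. This yields an infinite $\Delta^0_2(A)$ subset $G$ of $A$ witnessing that $A$ is not computable from all of its $\Delta^0_2(A)$ infinite subsets. The only point requiring a word of care is the observation that $A \oplus A'$ computes nothing more than $A'$ does (as $A \le_\Tur A'$), so $G \le_\Tur A'$ and hence $G$ is $\Delta^0_2(A)$; this is routine. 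I do not anticipate a genuine obstacle here — the real work is the finite-extension argument that was already carried out in the paragraph before the statement, and this corollary is simply the diagonal instance of it combined with the trivial converse.
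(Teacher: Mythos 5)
Your proposal is correct and is precisely the paper's own proof: the paper states the theorem immediately after the displayed observation that $B \iS C$ and $A \niT C$ yield an infinite $G \subseteq B$ with $G \le_\Tur A\oplus B'$ and $A \nle_\Tur G$, and it simply says ``By taking $A=B=C$, we obtain.'' Your instantiation, the observation that $A\oplus A'\equiv_\Tur A'$ puts $G$ in $\Delta^0_2(A)$, and the trivial converse are exactly the argument the paper intends.
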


A quick examination of the proof of \cref{thm:transitivity_for_ie} also shows that we can add uniformity: if $A \iS B$ and $B \uiS C$ then $A \uiS C$.

\subsubsection{Hyperimmunity} 

Jockusch asks at the end of \cite{Jockusch68:Introreducible} whether an introreducible set can have a hyperimmune complement. We show it cannot. The following argument is a straightforward modification of \cite[Proposition 4.4]{Hirschfeldt2008strength}.

\begin{proposition} \label{prop:hyperimmune_avoidance}
	Suppose that~$C$ is noncomputable and that $A$ is co-hyperimmune. Then there is an infinite subset $G\subseteq A$ which does not compute~$C$. 
\end{proposition}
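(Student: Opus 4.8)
The plan is to build the subset $G \subseteq A$ by a forcing construction, where conditions are finite subsets of $A$ together with a restraint ensuring that only finitely much of $A$ has been "used up"; we diagonalize against each Turing functional $\Phi$ to ensure $\Phi(G) \neq C$. The key point is that co-hyperimmunity of $A$ gives us enough room to extend any condition: since $\overline{A}$ is hyperimmune, the increasing enumeration of $\overline{A}$ is not dominated by any computable function, which means that for any computable function $p$, there are infinitely many $n$ with fewer than $p(n)$ elements of $\overline{A}$ below the $n$-th "target", i.e., $A$ is computably "dense" in a suitable sense. This is exactly the feature exploited in \cite[Proposition 4.4]{Hirschfeldt2008strength}.

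\textbf{Setup of the forcing.} First I would set up Mathias-style conditions $(F, n)$ where $F \in [A]^{<\w}$ and $n \in \w$, with the interpretation that $G$ extends $F$ and $G \setminus F \subseteq A \cap (n, \infty)$; a condition $(F', n')$ extends $(F, n)$ if $F \subseteq F'$, $n' \ge n$, and $F' \setminus F \subseteq (n, n']$. A sufficiently generic filter yields an infinite $G \subseteq A$ provided we can always extend a condition by adding a new element of $A$ — which holds since $A$ is infinite. The requirements are $R_e$: $\Phi_e(G) \neq C$, for each $e$.

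\textbf{Meeting the requirements.} Fix a condition $(F, n)$ and a functional $\Phi = \Phi_e$. I claim we can find an extension forcing $R_e$. Consider two cases. If there is some finite set $E$ with $E \cap (n, \max E] \subseteq A$ (so that $(F \cup E, \max E)$ is a legitimate extension) and some $x$ such that $\Phi(F \cup E, x)\converge$ with value $\neq C(x)$, then we take that extension and $R_e$ is permanently satisfied. Otherwise, suppose no such disagreement can ever be forced below the condition. Then I would argue this lets us compute $C$, contradicting noncomputability of $C$: for each $x$, search for a finite $E$ with $E \cap (n,\max E] \subseteq A$ and $\Phi(F \cup E, x)\converge$. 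Here is where co-hyperimmunity enters — we need to know such $E$ exists and, crucially, that we can find one \emph{computably}. Co-hyperimmunity of $A$ guarantees that along the principal function of any sufficiently sparse computable set, $A$ meets the required intervals; more precisely, following Hirschfeldt--Shore-style arguments, if for every computable "candidate" extension $\Phi$ failed to converge on $x$, then $\{x : \exists \text{ computable valid } E,\ \Phi(F\cup E,x)\converge\}$ would still compute $C$ since in the no-disagreement case any convergent value equals $C(x)$, and we need convergence to happen for \emph{every} $x$ (else $G$ would have $\Phi(G)$ partial, which also satisfies $R_e$). So either way $R_e$ is met.

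\textbf{Main obstacle.} The delicate step is the computability of the search in the second case: a priori, to know that $(F\cup E, \max E)$ is a valid condition we need $E \cap (n, \max E] \subseteq A$, and membership in $A$ is not decidable. The resolution — and the heart of the Hirschfeldt--Shore argument being adapted — is that co-hyperimmunity of $A$ ensures that for a suitably chosen computable increasing function, the values land in $A$ on a set that is "large enough" to guarantee convergence of $\Phi$ on each $x$ whenever convergence is forceable at all; one uses that $\overline{A}$'s principal function is not dominated by the relevant computable bound. Making this precise — choosing the computable function, and showing the resulting decision procedure for $C$ is correct and total — is the one place requiring genuine care, but it is a routine transcription of \cite[Proposition 4.4]{Hirschfeldt2008strength} to the present setting, so I would cite that and fill in the adaptation.
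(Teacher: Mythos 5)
Your overall plan --- finite-extension forcing over subsets of $A$, diagonalizing against each $\Phi$, and in the failure case arguing that $C$ is computable --- is the right shape and matches the paper, which uses the Cohen poset $\PP_A$. But the step you flag as ``the one place requiring genuine care'' is precisely where your argument lacks the needed idea, and your proposed resolution via co-hyperimmunity does not close the gap. Your plan is to compute $C(x)$ by searching over valid extensions $E$ with $E\cap(n,\max E]\subseteq A$, and you correctly observe that this search is not computable because membership in $A$ is undecidable. Your fix --- that co-hyperimmunity guarantees ``convergence along a suitably chosen computable function'' --- is not a mechanism that makes the search computable or that certifies the answer, and it is not the way hyperimmunity is used here.

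The missing device is the \emph{splitting pair}: a pair $(F_0,F_1)$ of \emph{arbitrary} finite sets (no requirement $F_i\subseteq A$) above a threshold $k$ for which, for some $n$, $\Phi(E\cup F_0,n)\converge=0$ and $\Phi(E\cup F_1,n)\converge=1$. Such pairs can be enumerated computably, and that is what the case analysis really hinges on. If for some $k$ no splitting pair exists above $k$, then $C$ is computable: to decide $C(n)$, search for \emph{any} finite $F$ with $\min F>k$ and $\Phi(E\cup F,n)\converge$ --- no membership-in-$A$ test is needed, since absence of splitting pairs forces all such convergent values to agree, and the value $C(n)$ is among them (witnessed by some $F\subset A$ from a generic extension). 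If instead splitting pairs exist above every $k$, build a computable strong array of pairwise-disjoint sets $F_0\cup F_1$; hyperimmunity of $\overline A$ (in its strong-array form) then yields some pair with $F_0\cup F_1\subseteq A$, so $E\cup F_0$ and $E\cup F_1$ are both conditions in $\PP_A$, one of which forces $\Phi(G)\ne C$ --- contradicting the assumption that $E$ forces $\Phi(G)=C$. Your proposal contains neither the splitting-pair reformulation (which makes the search computable) nor the strong-array application of hyperimmunity (which transports an unconstrained splitting pair into $A$); these two ideas are the actual content of the proof, not a routine transcription to be filled in.
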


\begin{proof}
	We will show that a~$G$ sufficiently generic for~$\PP_A$ does not compute~$C$. Suppose, for a contradiction, that this is not the case. Then there is a functional~$\Phi$ and a finite set $E\subset A$ which (thought of as an element of $\PP_A$) forces that $\Phi(G)=C$. First, we observe that for every~$k$ there is a pair $(F_0,F_1)$ of finite sets satisfying:
	\begin{orderedlist*}
		\item For some~$n$ we have $\Phi(E\cup F_0,n)\converge = 0$ and $\Phi(E\cup F_1,n)\converge = 1$; and
		\item $\min (F_0\cup F_1) > k, \max E$.
	\end{orderedlist*}
	Otherwise, if $k > \max E$ is sufficiently large so that no such pair $(F_0,F_1)$ exists, then we can compute~$C$ by outputting $\Phi(E\cup F,n)$ for finite sets $F$ with $\min F > k$. Now, since $\overline A$ is hyperimmune, we see that there is some pair $(F_0,F_1)$ satisfying (i) with $F_0\cup F_1 \subset A$. Thus, both $E\cup F_0$ and $E\cup F_1$ are conditions in~$\PP_A$, both extending~$E$.  But then one of $E\cup F_0$ and $E\cup F_1$ forces that $\Phi(G)\ne C$, a contradiction. 
\end{proof}

By letting $A = C$ (and noting that hyperimmune sets cannot be computable), we obtain:

\begin{corollary} \label{cor:hyperimmune}
	An introreducible set cannot be co-hyperimmune. 
\end{corollary}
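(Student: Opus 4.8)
The plan is to obtain \cref{cor:hyperimmune} as an immediate consequence of \cref{prop:hyperimmune_avoidance}, taking the set to be computed and the set whose subsets we thin to be one and the same. Suppose, toward a contradiction, that $A$ is an infinite introreducible set whose complement $\overline{A}$ is hyperimmune. The first step is the elementary observation that a hyperimmune set is never computable: if $\overline{A}$ were computable then so would be its principal function, which would then fail to escape every computable function, contradicting hyperimmunity. Hence $\overline{A}$, and therefore $A$, is noncomputable.

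Now I would invoke \cref{prop:hyperimmune_avoidance} with the noncomputable set $C$ of that statement taken to be $A$ itself, which is legitimate precisely because of the previous step together with the standing assumption that $A$ is co-hyperimmune. The proposition then produces an infinite subset $G\subseteq A$ with $A\nle_\Tur G$. But this flatly contradicts introreducibility of $A$: the relation $A\iT A$ requires every infinite subset of $A$ to compute $A$, in particular $A\le_\Tur G$. This contradiction finishes the proof.

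I do not anticipate any real obstacle; the corollary is a one-line deduction once \cref{prop:hyperimmune_avoidance} is available. The only point needing a moment's care is the licence to set $C = A$ in the proposition, which requires $A$ to be noncomputable, and this comes for free from co-hyperimmunity rather than having to be hypothesised separately. One could alternatively note that if $A$ is computable then the corollary holds vacuously, a computable set having a computable and hence non-hyperimmune complement; but folding this into the noncomputability observation is cleaner.
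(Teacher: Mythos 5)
Your proof is correct and follows precisely the paper's own route: specialise \cref{prop:hyperimmune_avoidance} to $C=A$, using the standard fact that a hyperimmune set (here $\overline{A}$, hence also $A$) is noncomputable to meet the proposition's hypothesis, and then derive a contradiction with $A\iT A$.
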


\subsection{Global Mathias forcing} 
\label{sub:global_mathias_forcing}

The conditions of Mathias forcing are pairs $(F,X)$ where $F$ is finite, $X$ is infinite, and $\max F < \min X$. A condition $(F,X)$ extends a condition $(E,Y)$ if $X\subseteq Y$ and $E \subseteq F \subset E\cup Y$. In computability, the reservoirs~$X$ are often restricted to some countable collection of sets, such as the low sets, or sets in a Turing ideal. Here, however, we use the unrestricted version, allowing all possible reservoirs. 

Recall that a condition $(F,X)$ forces some statement $\vphi(G)$ if the statement holds for every \emph{sufficiently generic} $G$ compatible with $(F,X)$ (where $G$ is compatible with $(F,X)$ if $F\subseteq G \subseteq F\cup X$). We say that a condition $(F,X)$ \emph{strongly forces} $\vphi(G)$ if the statement holds for \emph{every} infinite set $G$ compatible with $(F,X)$. 

The main combinatorial tool used is the \emph{Galvin-Prikry theorem} \cite{GalvinPrikry} which states that Borel subsets of $[\w]^\w$ are \emph{Ramsey}, namely: they or their complements contain $[A]^\w$ for some infinite set~$A$. This gives the strong Prikry property of Mathias forcing:

\begin{proposition} \label{prop:Prikry}
	If $(F,X)$ is a Mathias condition and $\+U\subseteq [\w]^\w$ is Borel, then there is an infinite set $Y\subseteq X$ such that the condition $(F,Y)$ strongly decides the statement $G\in \+U$. 
\end{proposition}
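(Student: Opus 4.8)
The plan is to pass from the condition $(F,X)$ to a reservoir that "locally" decides membership in $\+U$ in the strong sense, by applying the Galvin--Prikry theorem in a relativized form. First I would set up the right Borel set to which to apply Galvin--Prikry. Given the Mathias condition $(F,X)$, define
\[
\+V = \left\{ Z\in [X]^\w \,:\, F\cup Z \in \+U \right\}.
\]
Since $\+U$ is Borel and the map $Z\mapsto F\cup Z$ is continuous on $[X]^\w$, the set $\+V$ is a Borel subset of $[X]^\w$ (working inside the Polish space $[X]^\w$, or equivalently intersecting a Borel subset of $[\w]^\w$ with $[X]^\w$). The Galvin--Prikry theorem, applied inside $[X]^\w$ (which is order-isomorphic to $[\w]^\w$ via the principal function of $X$, so the theorem applies verbatim), yields an infinite set $Y\subseteq X$ such that either $[Y]^\w\subseteq \+V$ or $[Y]^\w\cap \+V = \emptyset$.

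Next I would check that $(F,Y)$ strongly decides $G\in \+U$. Every infinite $G$ compatible with $(F,Y)$ has the form $G = F\cup Z$ for some $Z\in [Y]^\w$ (here I use that $\max F < \min X \le \min Y$, so $F$ and $Z$ are automatically disjoint and concatenate correctly, and $Z = G\setminus F$ is infinite because $G$ is). In the first alternative, $Z\in [Y]^\w\subseteq \+V$, so $F\cup Z = G\in \+U$ for every such $G$; hence $(F,Y)$ strongly forces $G\in \+U$. In the second alternative, $Z\in [Y]^\w$ gives $Z\notin \+V$, i.e.\ $G = F\cup Z\notin \+U$ for every such $G$; hence $(F,Y)$ strongly forces $G\notin \+U$. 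Either way the condition $(F,Y)$ strongly decides the statement, which is what was claimed; note $(F,Y)$ is a genuine Mathias condition extending $(F,X)$ since $Y\subseteq X$ is infinite and $\max F < \min X \le \min Y$.

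The only real point requiring care — the "main obstacle", such as it is — is making sure the Galvin--Prikry theorem is invoked in the correct ambient space: it is stated for Borel subsets of $[\w]^\w$, whereas $\+V$ lives in $[X]^\w$. This is handled by transporting along the principal function $p_X$, which induces a homeomorphism $[\w]^\w \to [X]^\w$ preserving the "$[A]^\w$" structure, so a homogeneous set for the pulled-back Borel set maps to the desired $Y\subseteq X$. (Equivalently, one can absorb $X$ into the definition of the Borel set and apply Galvin--Prikry on all of $[\w]^\w$, then intersect with $[X]^\w$.) Everything else is bookkeeping about how infinite sets compatible with a Mathias condition decompose as $F\cup Z$.
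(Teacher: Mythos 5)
Your proof is correct and takes essentially the same route as the paper: both apply the Galvin--Prikry theorem (relativized to $[X]^\w$) to the set $\{Z\in [X]^\w : F\cup Z\in \+U\}$, which is precisely the one-line argument given in the paper. You simply spell out the transport along $p_X$ and the decomposition $G = F\cup Z$ that the paper leaves implicit.
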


\begin{proof}
	The set $\left\{ Z\in [X]^\w \,:\,  F\cup Z \in \+U    \right\}$ has the Ramsey property (relative to the space $[X]^\w$). 
\end{proof}

\subsubsection{Uniformization} 

Mathias forcing helps uniformise introcomputations and enumerations. 

\begin{proposition} \label{prop:uniformization_theorem_for_c.e.}
	If $A\iS B$ then there is some $C\in [B]^\w$ such that $A \uiS C$. 
\end{proposition}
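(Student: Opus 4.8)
The plan is to use global Mathias forcing together with the Galvin--Prikry theorem (via the strong Prikry property, \cref{prop:Prikry}) to extract from $B$ an infinite subset $C$ on which a single relative c.e.\ operator works for every infinite subset of $C$. The starting point is that $A \iS B$ means every infinite subset of $B$ is a c.e.\ oracle for $A$, so in particular every sufficiently generic $G$ for global Mathias forcing below $(\emptyset, B)$ enumerates $A$ via \emph{some} relative c.e.\ operator. Since there are only countably many such operators $\Theta_e$, by genericity there is a condition $(F,X)$ with $X \subseteq B$ and an index $e$ such that $(F,X)$ forces $\Theta_e(G) = A$. Replacing $C$ by $X$ (or $F \cup X$) and adjusting $\Theta_e$ to prepend the fixed finite information of $F$, we may assume $(\emptyset, C)$ forces $\Theta(G) = A$ for a fixed relative c.e.\ operator $\Theta$ and a fixed infinite $C \subseteq B$.

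Now ``$(\emptyset,C)$ forces $\Theta(G) = A$'' gives us correctness on generics, but we need correctness on \emph{all} infinite subsets of $C$, i.e.\ $A \uiS C$ in the literal sense that a single operator enumerates $A$ from every $S \in [C]^\w$. Here I would run the same ``forget the reservoir'' trick as in the proof of \cref{thm:transitivity_for_ie}: define a new relative c.e.\ operator $\Psi$ so that $\Psi(S)$ enumerates $n$ whenever $n \in \Theta(\tau)$ for some finite $\tau$ whose set of $1$'s is a subset of $S$ (more precisely, whose positive part is an initial segment, in the end-extension ordering on finite subsets of $C$, of $S$ viewed as a subset of $C$). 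For soundness: every such $\tau$ extends to an infinite subset $D \subseteq C$, and any infinite subset of $C$ is itself forcing-compatible with $(\emptyset,C)$ and contains $\tau$'s positive part as an end-extendable finite piece, so $n \in \Theta(D) = A$; hence $\Psi(S) \subseteq A$ for every $S \in [C]^\w$. For completeness: given $S \in [C]^\w$ and $n \in A$, the fact that $(\emptyset, C)$ forces $\Theta(G) = A$ means (as spelled out in the proof of \cref{thm:transitivity_for_ie}) that every finite subset of $C$ with the right end-extension shape can be grown into a condition $\tau$ with $n \in \Theta(\tau)$; taking finitely many elements of $S$ far enough out produces such a $\tau$ with positive part inside $S$, so $n \in \Psi(S)$. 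Thus $A = \Psi(S)$ for all $S \in [C]^\w$, i.e.\ $A \uiS C$.

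The one point deserving care --- and the part I expect to be the main obstacle --- is the passage from ``forced on generics'' to the combinatorial statement that every end-extension-shaped finite subset of $C$ can be completed to put $n$ into $\Theta$. This is exactly where one wants the \emph{strong} decision of \cref{prop:Prikry} rather than mere genericity: for each $n$, the set $\+U_n = \{ S \in [\w]^\w : n \in \Theta(S) \}$ is $\Sigma^0_1$, hence Borel, so after the initial reduction we may further thin $C$ (using a fusion over all $n$, or just over $n \in A$, and noting that below $(\emptyset,C)$ genericity already forces $n \in \Theta(G)$ for each $n \in A$ and $n \notin \Theta(G)$ for each $n \notin A$) so that $(\emptyset, C)$ \emph{strongly} decides each $\+U_n$: for $n \in A$, every infinite $S \subseteq C$ has some finite end-extendable initial piece $\tau$ with $n \in \Theta(\tau)$; for $n \notin A$, no finite subset $\tau$ of $C$ has $n \in \Theta(\tau)$ (otherwise some generic would wrongly enumerate $n$). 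With strong deciding in hand the soundness and completeness arguments above go through without any genericity hypothesis on the particular subset $S$, which is precisely what uniformity demands.

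I should double-check one bookkeeping issue: the fusion is over countably many $\Sigma^0_1$ requirements (one per $n \in \w$), each handled by \cref{prop:Prikry} with a fixed finite part $\emptyset$, so the standard diagonal intersection of reservoirs (a decreasing $\w$-sequence $C = X_0 \supseteq X_1 \supseteq \cdots$ with the $n$-th requirement met by $X_n$, then take any infinite $C^* \subseteq^* X_n$ for all $n$, e.g.\ a pseudointersection) yields a single infinite $C^* \subseteq C$ strongly deciding every $\+U_n$; since $A \iS B$ and $C^* \in [B]^\w$, correctness on generics below $(\emptyset, C^*)$ is inherited, and the strong decisions have the right polarity. Then $C^*$ is the desired set and $\Psi$ the desired operator. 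By \cref{prop:uie_and_uiS} one could equivalently phrase the conclusion in terms of $\uie$, but the relative c.e.\ operator formulation is the most direct.
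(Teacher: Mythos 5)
Your overall strategy is exactly the paper's: force with global Mathias conditions below $(\emptyset,B)$ to get a condition $(F,X)$ and a relative c.e.\ operator $\Theta$ forcing $\Theta(G)=A$, then upgrade forcing to strong forcing via the strong Prikry property, at which point a single operator works on all infinite subsets of the reservoir. The genuine gap is in your fusion step. You apply \cref{prop:Prikry} separately to each $\Sigma^0_1$ set $\+U_n=\{S: n\in\Theta(S)\}$, obtaining $X_0\supseteq X_1\supseteq\cdots$, and then take a pseudointersection $C^*\subseteq^* X_n$, claiming that $(\emptyset,C^*)$ strongly decides every $\+U_n$. This does not follow: strong forcing by $(F,X_n)$ is a statement only about infinite sets $S$ with $F\subseteq S\subseteq F\cup X_n$, and an infinite subset of $C^*$ may contain finitely many elements outside $X_n$, so it is not compatible with $(F,X_n)$. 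Since $\Theta$ is a relative c.e.\ operator reading both positive and negative oracle information, knowing $n\in\Theta(S\cap X_n)$ does not give $n\in\Theta(S)$, so those finitely many stray elements genuinely matter; and the standard Mathias fusion with growing finite parts does not rescue this either, since the resulting set again has infinite subsets omitting the committed finite parts.

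The fix is what the paper actually does: apply \cref{prop:Prikry} once to the single Borel set $\{Z:\Theta(Z)=A\}$ (equivalently, to the Borel combination of all the $\+U_n$ with the correct polarities) to get one reservoir $Y\subseteq X$ with $(F,Y)$ strongly deciding it; the decision must be positive because $(F,Y)$ extends a condition forcing $\Theta(G)=A$. Then every infinite $Z\subseteq Y$ satisfies $\Theta(F\cup Z)=A$ outright, and your ``forget the reservoir'' operator $\Psi$ is unnecessary: $Z\mapsto\Theta(F\cup Z)$ is already a single relative c.e.\ operator witnessing $A\uiS Y$.
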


Note that again by considering $A\oplus \overline{A}$, \cref{thm:uniformization_theorem} follows. 

\begin{proof}
	The Mathias condition $(\emptyset, B)$ strongly forces that $A$ is c.e.~in $G$, where~$G$ denotes the generic. Thus, there is some relative c.e.\ operator~$\Theta$ and a condition $(F,X)$ extending $(\emptyset, B)$ (so $X\subseteq B$) which forces that $\Theta(G)=A$. By the strong Prikry property (\cref{prop:Prikry}), by shrinking~$X$ we may assume that $(F,X)$ strongly forces that $\Theta(G)=A$. Then $A \uiS X$: the map $Z\mapsto \Theta(F\cup Z)$ sends all infinite subsets of~$X$ to~$A$.
\end{proof}


We note that there is nothing special about Turing, indeed the proof applies to any reducibility defined by a countable collection of partial Borel functions. 

\subsubsection{Sets without introreducible subsets} 

Much of our focus is on finding introreducible subsets, but Mathias forcing also allows us to construct sets without introreducible subsets. 

\begin{proposition} \label{prop:no_subsets}
	Every infinite set has an infinite subset~$B$ satisfying: for all $C,D\in [B]^\w$, $C \nuiT D$. 
\end{proposition}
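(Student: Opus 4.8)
I would build $B$ as a single Mathias-generic subset of the given infinite set $A_0$, meeting countably many dense sets that diagonalize against all pairs $(\Phi_e, \Psi_i)$ of Turing functionals. The point is: for $C \uiT D$ to fail for \emph{all} $C, D \in [B]^\omega$, it suffices that for every functional $\Phi$ there is no condition below which some infinite $C \subseteq B$ and some infinite $D \subseteq B$ satisfy $\Phi(D) = C$ for all infinite subsets of $D$ (note $C \subseteq D$ is not required here, so I must handle arbitrary pairs, not just nested ones). So I index requirements by a single functional $\Phi$ and aim to make the final generic $B$ have the property that for no infinite $D \subseteq B$ is $\Phi$ a uniform reduction from $D$ to some infinite subset of $B$.

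\emph{Key steps.} First, set up global Mathias forcing as in the excerpt, with conditions $(F, X)$ and the strong Prikry property (\cref{prop:Prikry}) available. Second, for a fixed $\Phi$, consider the Borel set $\+U_\Phi = \{ Z \in [\w]^\w : \Phi(Z) \text{ is total and infinite} \}$; using \cref{prop:Prikry} I can shrink the reservoir of any condition so that it strongly decides membership in $\+U_\Phi$. If it strongly forces $G \notin \+U_\Phi$, then no infinite subset of the generic even computes an infinite set via $\Phi$, killing the requirement trivially. In the other case — the condition $(F,X)$ strongly forces $\Phi(G)$ total and infinite — I want to find an extension that forces $\Phi(D) \ne \Phi(D')$ for two infinite subsets $D, D'$ of the generic, or else that $\Phi(D)$ is not a subset of $B$ for some such $D$; this contradicts $D \uiT$ (anything). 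The mechanism: since $\Phi(G)$ is infinite for every infinite $G \subseteq F \cup X$, I can split $X$ into two infinite pieces $X_0 \sqcup X_1$ and ask whether $\Phi(F \cup X_0) = \Phi(F \cup X_1) = \Phi(F \cup X)$; if these ever disagree I have won immediately (two infinite subsets of the eventual $B$ on which $\Phi$ disagrees, so $\Phi$ is not a uniform reduction from $F \cup X$). If they always agree under every such splitting, then the common value $W = \Phi(F \cup X)$ is computed \emph{without} using the oracle beyond $F$ in an essential way — more precisely, a standard "the answer is stable under thinning the reservoir" argument shows $W$ is computable (or at least $W$ can be made to differ from every infinite subset of the generic by a further genericity requirement ensuring $B \ne W$). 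So I then add the requirement that $B$ omit this particular computable set $W$, which is a density requirement easily met by choosing the next element of $F$ outside $W$ if $W$ is coinfinite; if $W$ is cofinite, then any coinfinite $B$ (which genericity guarantees) already satisfies $W \not\subseteq B$.

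\emph{Organizing the requirements.} Enumerate all Turing functionals $\Phi_0, \Phi_1, \dots$. At stage $e$, given the current condition $(F, X)$, first invoke \cref{prop:Prikry} to make it strongly decide $G \in \+U_{\Phi_e}$; in the negative case do nothing further for $e$. In the positive case, search for an extension $(F', X')$ and a splitting $X' = X'_0 \sqcup X'_1$ witnessing $\Phi_e(F' \cup X'_0) \ne \Phi_e(F' \cup X'_1)$; if found, record this disagreement permanently (the eventual $B$ contains both $F' \cup X'_0$ and $F' \cup X'_1$ as infinite subsets). If no such extension exists, extract the stable computable value $W_e$ and ensure $W_e \not\subseteq B$ by a finite extension. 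Also interleave the requirements that $B$ is infinite and coinfinite. The final $B$ is the generic set; for each $e$, either $\Phi_e$ maps no infinite subset of $B$ to an infinite set, or $\Phi_e$ disagrees on two infinite subsets of $B$, or the range-candidate $W_e$ fails to be a subset of $B$ — in every case, for no pair $C, D \in [B]^\omega$ do we have $C \uiT D$ via $\Phi_e$, since $\uiT$ would require $\Phi_e(D') = C$ for \emph{every} infinite $D' \subseteq D \subseteq B$, in particular $C \subseteq B$, contradicting one of the three alternatives.

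\emph{Main obstacle.} The delicate point is the "stability implies computability (or at least avoidability)" step: I need that when $\Phi_e(F' \cup X'_0) = \Phi_e(F' \cup X'_1)$ for \emph{all} infinite splittings of \emph{all} reservoir extensions, the common value is simple enough to dodge with a finite extension of $B$. The cleanest route is probably to argue that in this case the value is in fact independent of the reservoir entirely — $\Phi_e(F' \cup Z) = \Phi_e(F' \cup Z')$ for all infinite $Z, Z' \subseteq X'$ — and then such a "reservoir-independent" value, being computable from $F'$ alone together with the fact that \emph{some} infinite set lies above $F'$, is computable, hence coinfinite-or-cofinite, hence avoidable. Making the passage from "stable under binary splits of every extension" to "fully reservoir-independent" airtight (a pigeonhole/amalgamation argument over finitely branching splits) is where the real care is needed; everything else is bookkeeping over the countably many requirements.
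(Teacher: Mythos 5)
There is a genuine gap at the heart of your proposed diagonalization, and it is more serious than the ``reservoir-independence'' step you flag at the end.

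\emph{The disagreement case does not survive.} When you discover a split $X' = X'_0 \sqcup X'_1$ with $\Phi(F'\cup X'_0) \neq \Phi(F'\cup X'_1)$, you write that ``the eventual $B$ contains both $F'\cup X'_0$ and $F'\cup X'_1$ as infinite subsets.'' It does not: the Mathias generic $B$ satisfies $F' \subseteq B \subseteq F'\cup X'$ but is a \emph{sparse} subset of $F'\cup X'$ (later requirements keep thinning the reservoir), so $B$ contains neither $X'_0$ nor $X'_1$ in full. The two sets on which $\Phi$ disagrees are not subsets of $B$, so nothing has been achieved toward the requirement. Moreover, even if one somehow secured a disagreement between two infinite $D_0, D_1 \subseteq B$, that only refutes $C\uiT D$ for those $D\subseteq B$ containing \emph{both} $D_0$ and $D_1$; it says nothing about the other $D \in [B]^\omega$. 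So this branch of your case analysis is wrong on two counts. A related but smaller slip appears in the first branch: strongly forcing $G\notin\+U_\Phi$ controls $\Phi(D)$ only for those $D\subseteq B$ that contain $F$; you need an extra step to handle $D$ that drop parts of $F$. And in the last branch, ``reservoir-independent'' gives at best $W\le_\Tur X'$, not $W$ computable---this happens not to matter, since you only need $W\not\subseteq B$ as a density requirement, but the justification you give is incorrect.

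The paper's proof is simpler and sidesteps all of this, and does not even need Galvin--Prikry. For a fixed $\Phi$ and condition $(F,X)$, consider for each $n>\max F$ the class $\+U_n=\{Z\in[X]^\omega : \Phi(Z,n)\!\downarrow=0\}$. If some $Y\in[X]^\omega$ and some $n>\max F$ have $[Y]^\omega\cap\+U_n=\emptyset$, extend to $(F,Y\setminus\{n\})$: then $n\notin G$, so for any $C,D\in[G]^\omega$ one has $C(n)=0$ while $\Phi(D\setminus F,n)\ne 0$, hence $\Phi(D\setminus F)\ne C$. Otherwise $(F,X)$ already works: given $C,D\in[G]^\omega$, pick any $n\in C$ with $n>\max F$ and apply the case hypothesis to $Y = D\setminus F\in[X]^\omega$ to get $Z\in[D\setminus F]^\omega$ with $\Phi(Z,n)\!\downarrow=0\ne 1=C(n)$. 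The crucial structural point, absent from your proposal, is that the witness $Z$ is chosen \emph{per pair} $(C,D)$ and lives inside $D$ itself; the forcing only commits to which of the two cases holds, not to any particular disagreement or common value. You should rework your argument around producing, for each pair $(C,D)$, a witness $Z\in[D]^\omega$ with $\Phi(Z)\neq C$, rather than trying to store a single global disagreement or range-candidate.
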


\begin{proof}
	We claim that a sufficiently Mathias generic~$G$ has the desired property; it is important to note that meeting only countably many dense sets suffices. (To get~$G$ to be a subset of a given set~$A$, we can start with the condition $(\emptyset, A)$.) Let~$\Phi$ be a Turing functional. Given any condition $(F,X)$, we will find an extension which strongly forces that for all $C,D\in [G]^\w$, there is some $Z\in [D]^\w$ such that $\Phi(Z)\ne C$, and so that~$\Phi$ does not witness that $C \uiT D$. 

	For each $n> \max F$ let $\+U_n = \left\{  Z\in [X]^\w \,:\,  \Phi(Z,n)\converge = 0  \right\}$. There are two cases. First, suppose that for some $n> \max F$ there is some $Y\in [X]^\w$ such that $[Y]^\w\cap \+U_n = \emptyset$. Then $(F,Y\setminus \{n\})$ is the desired extension, as in fact for all~$G$ compatible with $(F,Y\setminus \{n\})$, for all $C,D\in [G]^\w$ we have $\Phi(D)\ne C$. Otherwise, we claim that $(F,X)$ itself is as required: suppose that~$G$ is compatible with $(F,X)$, and let $C,D\in [G]^\w$. Let $n\in C$ be greater than~$\max F$. Then $D\setminus F$, which is an infinite subset of~$X$, has an infinite subset~$Z$ in $\+U_n$, and so $\Phi(Z)\ne C$. 
\end{proof}

\begin{corollary} \label{cor:no_subset}
	Every infinite set has an infinite subset~$B$ satisfying: for all $C,D\in [B]^\w$, $C \niT D$. In particular, every infinite set has an infinite subset~$B$ which has no introreducible subset. 
\end{corollary}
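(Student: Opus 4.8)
The plan is to show that the very same set $B$ produced by \cref{prop:no_subsets} already witnesses the corollary. So fix an infinite set, and let $B$ be an infinite subset of it such that $C \nuiT D$ for all $C,D\in [B]^\w$, as provided by \cref{prop:no_subsets}. I claim that in fact $C \niT D$ for all $C,D\in[B]^\w$. Suppose not; then there are $C,D\in [B]^\w$ with $C \iT D$. By \cref{thm:uniformization_theorem} (the uniformization theorem), there is some infinite $E\subseteq D$ with $C \uiT E$. But $E\subseteq D\subseteq B$, so $E\in [B]^\w$, and also $C\in[B]^\w$; thus $C \uiT E$ with both sets in $[B]^\w$, contradicting the choice of $B$. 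This establishes the first assertion.

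For the ``in particular'' clause: if some infinite $A\subseteq B$ were introreducible, then $A \iT A$, i.e.\ taking $C=D=A\in [B]^\w$ we would have $C \iT D$, contradicting what was just proved. Hence $B$ has no infinite introreducible subset. I do not expect any genuine obstacle here: all the work has already been done in \cref{prop:no_subsets} and \cref{thm:uniformization_theorem}, and the only point requiring a moment's care is to note that the uniformization theorem converts a (hypothetical) non-uniform introcomputation $C \iT D$ into a uniform one $C \uiT E$ while keeping the computing set $E$ inside $B$, which is exactly what makes the contradiction with \cref{prop:no_subsets} go through.
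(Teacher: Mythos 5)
Your proof is correct and is essentially the paper's own argument: the paper's proof is the one-liner that Theorem \ref{thm:uniformization_theorem} shows the set $B$ from Proposition \ref{prop:no_subsets} works, and you have simply spelled out the same deduction (uniformize a hypothetical $C \iT D$ to $C \uiT E$ with $E \subseteq D \subseteq B$, contradiction).
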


\begin{proof}
	\Cref{thm:uniformization_theorem} implies that the set~$B$ given by \cref{prop:no_subsets} is as required. 
\end{proof}

Soare \cite{Soare:higher} constructed a set without a subset of strictly higher degree. In fact, his construction gives an infinite set~$B$ satisfying: for all $C,D\in [B]^\w$, if $C \le_\Tur D$ then $C \subseteq^* D$. This implies \cref{cor:no_subset}. We added a proof, since it is simpler than Soare's; indeed, the proof of \cref{prop:no_subsets} does not even use the Galvin-Prikry theorem.

\subsection{Mathias and Spector-Gandy} 
\label{sub:mathias_and_spector_gandy}

We describe a variant of \emph{restricted} (countable) Mathias forcing which is also related to Spector-Gandy forcing (forcing with nonempty $\Sigma^1_1$ classes). The idea of modifying Mathias forcing to allow a collection of possible reservoirs was used by Liu in his separation of Ramsey's theorem for pairs from weak K\"onig's lemma \cite{Liu2012RT22}, and later also in \cite{Liu2015Cone,Patey2017Controlling}. 
Let~$\SSS$ be the following notion of forcing:
\begin{itemize}
	\item Conditions are pairs $(F,\+C)$, where $F$ is finite and $\+C\subseteq [(\max F,\infty)]^\w$ is nonempty, $\Sigma^1_1$, and closed downwards under~$\subseteq$: for all $X\in \+C$, $[X]^\w\subseteq \+C$. 
	\item A condition $(E,\+D)$ extends a condition $(F,\+C)$ if~$E$ extends~$F$, and for all $X\in \+D$, $X\cup (E\setminus F)\in \+C$.
\end{itemize}
If $(E,\+D)\le_{\SSS} (F,\+C)$ then $\+D \subseteq \+C$. Note that $(E,\+D)\le_{\SSS} (F,\+C)$ if and only if~$E$ extends~$F$ and for all $X\in \+D$ there is some $Y\in \+C$ such that $(E,X) \le_{\MM} (F,Y)$, where $\MM$ is Mathias forcing. 

\smallskip

If $\GG\subset \SSS$ is a sufficiently generic filter, then we let 
\[
	G[\GG] = \bigcup \left\{ F \,:\,  (\exists \+C)\,\, (F,\+C)\in \GG \right\}. 
\]
It is not difficult to see that~$G$ is infinite: for any condition $(F,\+C)$ we can take any $n> \max F$ such that $n\in X$ for some $X\in \+C$; then $(F\cup\{n\}, \+D)$ is an extension of $(F,\+C)$, where $\+D = \left\{ Y \subseteq (n,\infty) \,:\, Y\cup \{n\}\in \+C  \right\}$. 

We say that a set~$Z$ is \emph{compatible} with a condition $(F,\+C)$ if~$Z$ extends~$F$ and $Z\setminus F\in \+C$. We say that a condition $(F,\+C)$ \emph{strongly forces} a statement $\vphi(G)$ if $\vphi(Z)$ holds for every infinite set~$Z$ which is compatible with it. The following lemma says that if a condition strongly forces a statement then it forces that statement. 

\begin{lemma} \label{lem:Spector-Gandy}
	Every condition forces that~$G$ is compatible with it. 
\end{lemma}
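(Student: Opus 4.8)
The plan is to prove the easy half ($p$ forces $F\preceq G$) directly, and the substantive half ($p$ forces $G\setminus F\in\+C$) by building, alongside the generic, a branch of a tree projecting to $\+C$ which certifies $G\setminus F\in\+C$. Write $p=(F,\+C)$. For the easy half: every element of $G$ lies in the stem of some condition in the generic filter $\GG$, any such condition is compatible with $p$, and a common extension has a stem end-extending both, so every element of that stem below $\max F$ already lies in $F$; hence $F\preceq G$.

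The one place I would use downwards-closure of reservoirs is to get a convenient $\Sigma^1_1$ normal form for $\+C$. Fix a parameter $P$ and a $P$-computable tree $T$ on $2\times\w$ with $\+C=\pi[T]$ (reading sets as characteristic functions, $\pi$ the projection onto the first coordinate). Let $U$ be its \emph{fattening}: $(\sigma,\tau)\in U$ iff $(\sigma'',\tau)\in T$ for some $\sigma''\geq\sigma$ of the same length (pointwise domination). Then $U$ is a $P$-computable tree, \emph{monotone} in its first coordinate, and a K\"onig's-lemma argument (on the finitely branching tree of pointwise-larger finite strings) shows $\pi[U]\cap[\w]^\w=\{X\in[\w]^\w:\exists Z\supseteq X\ (Z\in\+C)\}$, which equals $\+C$ precisely because $\+C$ is closed downwards under $\subseteq$. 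The point of monotonicity is: if $(X,f)\in[U]$ and $Y\subseteq X$ is infinite then $(Y,f)\in[U]$ — membership via a fixed witness is inherited by infinite subsets, which the unmodified $T$ does not give.

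With $U$ in hand, for a condition $q=(E,\+D)\leq p$ and a string $\mu$ set $\+C[E,\mu]=\{Y\in[(\max E,\infty)]^\w:\exists g\succeq\mu,\ ((E\setminus F)\cup Y,g)\in[U]\}$; by monotonicity this is again a downwards-closed $\Sigma^1_1(P)$ class, hence a legal reservoir. Call $q$ \emph{committed} if $\+D\subseteq\+C[E,\mu_q]$, where $\mu_q$ is the $\le_{\mathrm{lex}}$-least $\mu$ of length $\max E+1$ with $((E\setminus F)\restriction|\mu|,\mu)\in U$ and $\+D\cap\+C[E,\mu]\neq\emptyset$. I would build the generic as a descending sequence $p=r_0\geq r_1\geq\cdots$ of committed conditions whose stems exhaust $G$ and whose lengths $\to\infty$; this is possible because committed conditions $(E,\+D)$ with $\max E\geq n$ are dense below $p$. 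Concretely: from any $q\leq p$ one reaches a committed condition by one reservoir-shrinking (intersect $\+D$ with $\+C[E,\mu_q]$, which contains an admissible witness); and one lengthens the stem of a committed condition by choosing $Y$ in its reservoir together with a $U$-branch $g$ for $(E\setminus F)\cup Y$, moving an initial block of $Y$ into the stem, taking the matching initial segment of $g$ as the new witness, intersecting the reservoir with $\+C[E',\text{new witness}]$, and re-shrinking to committed. All these intersections remain nonempty: the tail of $Y$ past the new stem elements survives, using monotonicity of $U$ and the fact that the first new stem element is $\min Y$, which keeps the relevant characteristic-function restrictions aligned.

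The crux — and the step I expect to be the only delicate one — is coherence of the committed witnesses: if $r_{i+1}\leq r_i$ are committed, then $\mu_{r_i}\preceq\mu_{r_{i+1}}$. Tracing a member of $r_{i+1}$'s reservoir through $r_{i+1}\leq r_i$ shows $\mu_{r_{i+1}}\restriction(\max E_{r_i}+1)$ is an admissible witness for $r_i$, so $\mu_{r_i}\leq_{\mathrm{lex}}\mu_{r_{i+1}}\restriction(\max E_{r_i}+1)$; conversely, $r_i$'s commitment $\+D_{r_i}\subseteq\+C[E_{r_i},\mu_{r_i}]$ forces $r_{i+1}$'s reservoir into $\+C[E_{r_{i+1}},\mu_{r_i}]$, so some length-$(\max E_{r_{i+1}}+1)$ extension of $\mu_{r_i}$ is an admissible witness for $r_{i+1}$, and $\le_{\mathrm{lex}}$-minimality of $\mu_{r_{i+1}}$ gives the reverse inequality; together these force $\mu_{r_i}=\mu_{r_{i+1}}\restriction(\max E_{r_i}+1)$. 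Transitively, $f:=\bigcup_i\mu_{r_i}\in\w^\w$ is well-defined; each stem $E_{r_i}$ is an initial segment of $G$, so $((G\setminus F)\restriction(\max E_{r_i}+1),\mu_{r_i})\in U$, and letting $i\to\infty$ gives $(G\setminus F,f)\in[U]$, i.e.\ $G\setminus F\in\pi[U]=\+C$. Hence $p$ forces that $G$ is compatible with $p$, and the construction is flexible enough to simultaneously meet any further countably many dense sets, so ``sufficiently generic'' is witnessed. Density and the closing limit argument are routine once the monotone tree $U$ is available; everything of substance is concentrated in the coherence step.
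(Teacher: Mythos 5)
Your proof is correct, but it takes a genuinely different route from the paper's at the crucial step of building a path through the projection tree alongside the generic. The paper works with the downward closure operation $\mathcal{A}\mapsto\mathcal{A}^{\supseteq}=\bigcup_{X\in\mathcal{A}}[X]^\w$ on $\Sigma^1_1$ classes and proves a single clean density lemma ($*$): it is dense below $(F,\mathcal{C})$ to have a condition that refines $(F,(\mathcal{C}_{\sigma_E,\tau})^{\supseteq})$ for \emph{some} $\tau$ with $(\sigma_E,\tau)\in T$. Coherence of the $\tau$'s then comes for free: at each application of ($*$) one works with the subtree $T_{\sigma_i,\tau_i}$, so the next witness extends the previous one by construction. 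You instead replace $T$ by a \emph{monotone} (pointwise-dominated) fattening $U$, for which $\pi[U]\cap[\w]^\w=\mathcal{C}$ by a K\"onig's lemma argument using the downward closure of $\mathcal{C}$; monotonicity of $U$ plays exactly the role that the $(\cdot)^{\supseteq}$ operation plays in the paper, namely making membership inheritable under passage to infinite subsets. But since you do not restrict to subtrees, you have to manufacture coherence by hand, which is where the ``committed condition'' machinery and the lex-least witness $\mu_q$ come in; the heart of your argument is the two-sided lex-minimality comparison showing $\mu_{r_i}=\mu_{r_{i+1}}\restriction(\max E_{r_i}+1)$. I checked this step (and the admissibility of $\mu_{r_{i+1}}\restriction(\max E_{r_i}+1)$ for $r_i$, and the existence of admissible witnesses, and the fact that a single reservoir-shrink suffices to reach a committed condition): it all goes through, and the rest of your sketch (density of committed conditions with long stems, interleaving with other dense sets, the limit passage to $(G\setminus F,f)\in[U]$) is routine once those pieces are in place. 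So your approach buys explicitness at the cost of noticeably more bookkeeping; the paper's version is shorter because choosing the ``right'' class $(\mathcal{C}_{\sigma_E,\tau})^{\supseteq}$ at each step makes the witness coherence automatic rather than something to be enforced by a minimality trick. Both are legitimate.
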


The argument is similar to the argument that a generic filter for Spector-Gandy forcing determines a generic real, that is, that the intersection of all the~$\Sigma^1_1$ sets in the filter is nonempty. We give the proof for completeness of presentation.

\begin{proof}
	For any class $\+A\subseteq [\w]^\w$ let $\+A^{\supseteq} = \bigcup_{X\in \+A}[X]^\w$ be the downward closure of~$\+A$ in $[\w]^\w$. If~$\+A$ is $\Sigma^1_1$ then so is~$\+A^{\supseteq}$. 

	Let $(F,\+C)$ be a condition; let $T$ be a computable tree defining a closed subset $[T]\subseteq 2^\w\times \w^\w$ such that $\+C = p[T]$. For $(\s,\tau)\in T$ let $T_{\s,\tau}$ be the subtree of~$T$ consisting of all pairs compatible with~$(\s,\tau)$, and let $\+C_{\s,\tau} = p[T_{\s,\tau}]$. 

	For a finite set~$E$ extending~$F$, let $\s_E$ be the string (of length $1+ \max E$) corresponding to the set $E\setminus F$. We show:
	\begin{description}
		\item[($*$)] The collection of conditions $(E,\+D) \le_{\SSS} (F,\+C)$ for which there is some~$\tau$ such that $(\s_E,\tau)\in T$ and $(E,\+D) \le_{\SSS} (F, ({\+C_{\s_E,\tau}})^\supseteq)$ 
		 is dense below $(F,\+C)$. 
	\end{description}

	To see this, let $(E,\+D)$ extend~$(F,\+C)$. Extension in~$\SSS$ implies that 
			$\+D \subseteq \left( \+C \cap [\s_E]^\preceq \right)^\supseteq$,
		where $[\s_E]^\preceq$ is the clopen subset of Cantor space determined by the string~$\s_E$. Now $\+C\cap [\s_E]^\preceq = \bigcup \{ \+C_{\s_E,\tau} \,:\, (\s_E,\tau)\in T\}$, and so
	\[
		\+D \subseteq \left( \bigcup\{ \+C_{\s_E,\tau} \,:\, (\s_E,\tau)\in T\} \right)^\supseteq = 
		\bigcup\{ ({\+C_{\s_E,\tau}})^\supseteq \,:\, (\s_E,\tau)\in T\},
	\]
	 so there is some~$\tau$ such that $(\s_E,\tau)\in T$ and $\+D\cap ({\+C_{\s_E,\tau}})^\supseteq$ is nonempty; it follows that the condition $(E,\+D\cap  ({\+C_{\s_E,\tau}})^\supseteq)$ extends both $(E,\+D)$ and $(F, ({\+C_{\s_E,\tau}})^\supseteq)$.

	 \medskip
	 
	 To prove the lemma, we need to show that if~$\GG$ is sufficiently generic then $G[\GG]$ is compatible with every condition in~$\GG$. Let $(F,\+C)$ be a condition, and let~$\GG$ be a sufficiently generic filter containing $(F,\+C)$; let $G = G[\GG]$. Let~$T$ be as above. By recursion, we build a strictly increasing sequence $(\s_0,\tau_0) \prec (\s_1,\tau_1) \prec \cdots$ of pairs in~$T$ such that $\bigcup_i \s_i = G\setminus F$ and for all~$i$, $(F, (\+C_{\s_i,\tau_i})^\supseteq) \in \GG$. We start with $\s_0 = \tau_0$ being the empty string. Suppose that we have already chosen~$(\s_i,\tau_i)$ as required. By ($*$) applied to the condition $(F, (\+C_{\s_i,\tau_i})^\supseteq)$ (and the tree $T_{\s_i,\tau_i}$), since~$\GG$ is sufficiently generic, we find some $(E,\+D)\in \GG$ with $\max E > |\s_i|$ extending $(F, (\+C_{\s_E,\tau})^\supseteq)$ for some~$\tau$ such that $(\s_E,\tau)\in T_{\s_i,\tau_i}$. Thus, $\tau\succ \tau_i$, so we choose $(\s_{i+1},\tau_{i+1}) = (\s_E,\tau)$, noting that $\s_E \prec G\setminus F$ as $(E,\+D)\in \GG$. Letting $f = \bigcup_i \tau_i$, we see that $(G\setminus F,f)\in [T]$, whence $G\setminus F \in \+C$, as required. 
\end{proof}

Suppose that $A \iS B$. The collection of sets $C\in [B]^\w$ witnessing \cref{prop:uniformization_theorem_for_c.e.} is $\Pi^1_1(A)$, and since such a set was obtained by forcing with unrestricted conditions, the argument does not give any reasonable bound on the complexity of such~$C$. Below, we will apply a basis theorem to the following stronger result. 

\begin{theorem} \label{thm:a_lot_of_uniformisation}
	If $A\iS B$ then there is a nonempty $\Sigma^1_1(B)$ class of sets $C\in [B]^\w$ satisfying $A \uiS C$. 
\end{theorem}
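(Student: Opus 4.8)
The plan is to recast the argument of \cref{prop:uniformization_theorem_for_c.e.} using the forcing~$\SSS$ from the previous subsection in place of unrestricted Mathias forcing, in order to keep track of complexity. We work below the condition $(\emptyset, [B]^\w)$; note $[B]^\w$ is $\Sigma^1_1(B)$ (indeed computable in $B$) and downward closed, so this is a legitimate condition. Since $A \iS B$, every set $C\in[B]^\w$ enumerates~$A$, so in particular every sufficiently generic $G$ for $\SSS$ below $(\emptyset,[B]^\w)$ satisfies ``$A$ is c.e.\ in $G$'' — here I would invoke \cref{lem:Spector-Gandy}: the condition $(\emptyset,[B]^\w)$ \emph{strongly} forces this, since every infinite set compatible with it is an element of $[B]^\w$, hence enumerates $A$, and strong forcing implies forcing. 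Therefore there is a relative c.e.\ operator $\Theta$ and a condition $(F,\+C)\le_{\SSS}(\emptyset,[B]^\w)$ forcing $\Theta(G)=A$.

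The key step is to pass to a condition that \emph{strongly} forces $\Theta(G)=A$; this is the analogue, for $\SSS$, of the strong Prikry property (\cref{prop:Prikry}) that was used for Mathias forcing. Forcing $\Theta(G)=A$ amounts to forcing, for each $n\notin A$, the statement ``$n\notin\Theta(G)$'' (the statements ``$n\in\Theta(G)$'' for $n\in A$ are automatically strongly forced once $F$ is chosen so that $n\in\Theta(F)$, as these are $\Sigma^1_1$ facts that cannot be un-forced). So it suffices to handle, for a single $n\notin A$, the Borel (indeed $\Pi^0_1$) set $\+U_n = \{Z : n\notin \Theta(F\cup Z)\}$ and find $\+D\subseteq\+C$ with $(F,\+D)\le_{\SSS}(F,\+C)$ strongly forcing $G\setminus F\in\+U_n$. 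Here I would argue: by Galvin--Prikry applied to an $X\in\+C$, either $[X]^\w\subseteq\+U_n^{\,\prime}$ (the ``good'' side, relativised appropriately to account for $F$) — in which case $\+D=[X]^\w$ works — or $[X]^\w$ avoids it, which is impossible because $(F,\+C)$ already forces $\Theta(G)=A$ and $n\notin A$. Iterating over all $n\notin A$: one has to take a ``fusion'' keeping the finite part $F$ fixed while intersecting the reservoir classes $[X_n]^\w$ over all $n\notin A$, which is again a $\Sigma^1_1(B)$ operation since we are intersecting a $B$-computable sequence of $\Sigma^1_1$ classes (here I use that $\overline A$, being $\Sigma^1_1$-controlled, does not destroy $\Sigma^1_1$-ness of the resulting class). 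The result is a condition $(F,\+D)$, with $\+D$ a nonempty $\Sigma^1_1(B)$ class, which strongly forces $\Theta(G)=A$.

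Finally I would conclude exactly as in \cref{prop:uniformization_theorem_for_c.e.}: for every infinite $Z\in\+D$, the set $F\cup Z$ is compatible with $(F,\+D)$, hence $\Theta(F\cup Z)=A$; and every infinite subset of $F\cup Z$ is of the same form $F\cup Z'$ with $Z'\in\+D$ (using downward closure of $\+D$), so the single enumeration operator $W\mapsto\Theta(F\cup W)$ witnesses $A\uiS F\cup Z$. Thus $\{F\cup Z : Z\in\+D,\ Z\text{ infinite}\}$ is a nonempty $\Sigma^1_1(B)$ class of sets $C\in[B]^\w$ with $A\uiS C$, as required.

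The main obstacle I anticipate is the fusion step: carrying out the iteration over the (merely $\Sigma^1_1$, not hyperarithmetic) set $\overline A$ while (i) keeping the reservoir class nonempty throughout and (ii) verifying that the end product is still $\Sigma^1_1(B)$ rather than something more complex. The nonemptiness is what \cref{lem:Spector-Gandy} is really for — it guarantees that no stage of the fusion can collapse the reservoir to $\emptyset$, since at each stage the condition still forces the (consistent) statement $\Theta(G)=A$. The complexity bookkeeping should come down to the standard fact that a $B$-effective intersection of $\Sigma^1_1$ classes indexed along a $\Sigma^1_1(B)$ set is $\Sigma^1_1(B)$.
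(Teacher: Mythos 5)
Your setup is right and you have correctly located the crux, but the resolution you propose for the crux does not work. The fusion step ``intersecting the reservoir classes $[X_n]^\w$ over all $n\notin A$'' produces the class $[\bigcap_n X_n]^\w$, and the intersection $\bigcap_n X_n$ of a decreasing sequence of infinite sets can perfectly well be finite or empty; nothing in the construction prevents this. \Cref{lem:Spector-Gandy} cannot rescue you here: it says that a \emph{generic filter} for $\SSS$ yields a real compatible with every condition in the filter, but your fusion is a deterministic limit of reservoirs over a fixed finite part --- it is neither a condition of $\SSS$ nor a generic filter, so the lemma does not apply to it. (Indeed the whole point of allowing downward-closed $\Sigma^1_1$ reservoirs in $\SSS$, rather than single Mathias reservoirs, is to survive exactly this kind of limit; a countable intersection of classes of the form $[X_n]^\w$ does not survive it.) The complexity claim is also unsupported: the sets $X_n$ come from Galvin--Prikry, which gives no definability bound whatsoever, so ``a $B$-computable sequence of $\Sigma^1_1$ classes'' is not something you have. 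A smaller error: for $n\in A$ the statement $n\in\Theta(G)$ is \emph{not} automatically strongly forced by choosing $F$ with $n\in\Theta(F)$, since $F$ is finite and $A$ is infinite; the positive direction needs its own argument.

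The paper resolves all of this at once by never intersecting the reservoirs. It runs essentially your iteration to get $Y_0\supseteq Y_1\supseteq\cdots$ with $(F,Y_n)$ strongly deciding the $n$-th requirement, but then takes the \emph{diagonal} set $\tilde X=\left\{ \min Y_n \,:\, n\in\w \right\}$, which is always infinite, and observes that $\tilde X$ belongs to the class $\+E$ of all $Z\in\+C$ such that for every $n$ and every finite $E\subset Z$ omitting the first $n$ elements of $Z$, $n\in\Phi(F\cup E)$ implies $n\in A$. This class is defined intrinsically (arithmetically in $A\oplus B$ on top of membership in $\+C$), with no reference to the sets $Y_n$, so it is $\Sigma^1_1(B)$; it is downward closed; and the matching uniform operator is the ``tail'' enumeration sending $C$ to the set of $n$ witnessed by some finite $E\subset C$ omitting the first $n$ elements of $C$. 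Soundness of this operator is the defining condition of $\+E$, and completeness for $n\in A$ is proved by applying the strong Prikry property to the tail of $C$ past its first $n$ elements (which lies in $\+C$ by downward closure). Replacing your intersection by this diagonal-plus-intrinsic-class device is exactly what is missing from your argument.
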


\begin{proof}
	We use $\SSS^B$, the notion of forcing discussed above relativised to~$B$: the conditions $(F,\+C)$ allow~$\+C$ to be $\Sigma^1_1(B)$ rather than merely~$\Sigma^1_1$. 

	The condition $(\emptyset,[B]^\w)$ forces that $G\subseteq B$, and so forces that $A$ is c.e.\ in~$G$. Therefore there is a condition $(F,\+C)$ extending $(\emptyset,[B]^\w)$ which for some relative c.e.\ operator~$\Phi$, forces that $\Phi(G)=A$. 

	For every $n$, let
	\[
		\+U_n = \left\{ Z\in 2^\w \,:\,  n\in A \Iff n\in \Phi(Z) \right\}.
	\]

	\begin{claim} \label{clm:a_lot_of_unif}
		For every~$n$, there is no $Y\in \+C$ such that the condition $(F,Y)$ strongly forces (in Mathias forcing) that $G\notin \+U_n$.
	\end{claim}

	\begin{proof}
	Suppose, for a contradiction, that the claim fails for some~$n$ and~$Y$. 

	There are two cases. First, suppose that $n\notin A$. Then $(F,Y)$ strongly forces (again in Mathias forcing) that $n\in \Phi(G)$. Let~$E$ be an initial segment of~$Y$ extending~$F$ such that $n\in \Phi(E)$. Then 
	 \[
	 (E, \left\{ Z\subseteq (\max E,\infty) \,:\,  Z\cup (E\setminus F)\in \+C   \right\})
	 \]
	 is a condition in~$\SSS^B$ which extends $(F,\+C)$ and strongly forces (in $\SSS^B$) that $\Phi(G)\ne A$; it therefore forces $\Phi(G)\ne A$, contradicting the assumption that $(F,\+C)$ forces the opposite. 

	 \smallskip
	 
	 Next, suppose that $n\in A$. So $(F,Y)$ strongly forces that $n\notin \Phi(G)$. Then 
	 \[
		\+D = 
		 \left\{ Z\in \+C \,:\, (\forall E\in [Z]^{<\w})\,\, n\notin \Phi(F\cup E)       \right\}
	\]
	is $\Sigma^1_1(B)$ and so $(F,\+D)$ is a condition in $\SSS^B$ extending $(F,\+C)$ and strongly forcing that $\Phi(G)\ne A$, which again is impossible. 		
	\end{proof}
	
	By the strong Prikry property of Mathias forcing, for every $X\in \+C$ and each~$n$ there is some $Y\in [X]^\w$ such that $(F,Y)$ strongly forces (with respect to Mathias forcing) that $G\in \+U_n$. Iterating, starting with any $X\in \+C$ obtain a sequence $X \supset Y_0 \supset Y_1 \supset Y_2 \supset \cdots$ with $(F,Y_n)$ strongly forcing that $G\in \+U_n$, and also ensure that $\min Y_{n+1} > \min Y_n$. Let $\tilde X = \left\{ \min Y_n \,:\,  n\in \w \right\}$; then $\tilde X \in [X]^\w$ and is an element of the class~$\+E$ of sets $Z\in \+C$ satisfying: for all~$n$, for every finite $E\subset Z$ which omits the first~$n$ elements of~$Z$, if $n\in \Phi(F\cup E)$ then $n\in A$. This class is $\Sigma^1_1(B)$ ($A$ is c.e.\ in~$B$, so is $\Delta^1_1(B)$). 

	We claim that for every $C\in \+E$ we have $A \uiS C$. Indeed, $\+E$ is downward closed, and we exhibit a uniform procedure for enumerating~$A$ from any $C\in \+E$. For all~$C$, let $\Theta(C)$ be the collection of~$n$ such that for some finite $E\subset C$ omitting the first~$n$ elements of~$C$, $n\in \Phi(F\cup E)$. Then for all $C\in \+E$ we have $A = \Theta(C)$. The fact that $C\in \+E$ implies that $\Theta(C)\subseteq A$. In the other direction, given $n\in A$, let~$X$ be the set obtained from~$C$ by removing the first~$n$ elements. Then $X\in \+C$, so there is some $Y\in [X]^\w$ such that $(F,Y)$ strongly forces that $n\in \Phi(G)$; in particular, $n\in \Phi(F\cup Y)$, so some finite subset~$E$ of~$Y$ will witness that $n\in \Theta(C)$. 
\end{proof}

As usual, we conclude:

\begin{corollary} \label{thm:a_lot_of_uniformisation_for_Turing}
	If $A\iT B$ then there is a nonempty $\Sigma^1_1(B)$ class of sets $C\in [B]^\w$ satisfying $A \uiT C$. 
\end{corollary}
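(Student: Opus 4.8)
The plan is to deduce this from the enumeration version, \cref{thm:a_lot_of_uniformisation}, by the standard device of replacing $A$ with $A\oplus\overline A$. Two facts from earlier in the section make this routine. First, for any infinite set $X$, $A \iT X$ if and only if $A\oplus\overline A \iS X$ (this was used already in the proof of \cref{thm:transitivity_for_iT_and_ie}). Second, $A\oplus\overline A \uiS X$ if and only if $A \uiT X$: indeed, as noted after \cref{prop:uie_and_uiS}, $A\uiT X$ holds exactly when a single functional computes $A$ from all infinite subsets of $X$ using only positive information, which is precisely what $A\oplus\overline A \uiS X$ says.

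So, assuming $A \iT B$, I would first record that $A\le_\Tur B$ (apply the hypothesis to the subset $B$ itself), hence $A\oplus\overline A \le_\Tur B$ and in particular $A\oplus\overline A$ is c.e.\ in $B$; this is exactly what is needed to run \cref{thm:a_lot_of_uniformisation} with $A\oplus\overline A$ in the role of ``$A$''. Next I would note $A\oplus\overline A \iS B$, which is immediate from $A\iT B$ by the first fact. Applying \cref{thm:a_lot_of_uniformisation} to the pair $(A\oplus\overline A,\,B)$ then produces a nonempty $\Sigma^1_1(B)$ class $\+E$ of sets $C\in[B]^\w$ with $A\oplus\overline A \uiS C$. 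Finally, by the second fact, $A\oplus\overline A \uiS C$ gives $A\uiT C$ for every $C\in\+E$, so $\+E$ is the desired class.

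There is no real obstacle: all the work is in \cref{thm:a_lot_of_uniformisation}, and what remains is bookkeeping. The only point worth a sentence of care is the complexity of the class: \cref{thm:a_lot_of_uniformisation} yields a class that is $\Sigma^1_1$ relative to the set on the right, here $B$, and since the parameter $A\oplus\overline A$ is computable from $B$ it contributes nothing, so the bound $\Sigma^1_1(B)$ is exactly as claimed.
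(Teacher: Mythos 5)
Your proof is correct and is exactly the paper's intended argument: the paper gives this as a one-line ``As usual, we conclude'' corollary of \cref{thm:a_lot_of_uniformisation}, relying on the standard replacement of $A$ by $A\oplus\overline{A}$ together with the observation that $A\iT X$ iff $A\oplus\overline{A}\iS X$ and $A\uiT X$ iff $A\oplus\overline{A}\uiS X$. Your extra remark that $A\le_\Tur B$ (so the parameter $A\oplus\overline{A}$ does not increase the $\Sigma^1_1(B)$ complexity) is a sensible bit of bookkeeping, though it is already implicit in the hypothesis $A\oplus\overline{A}\iS B$ by taking $C=B$.
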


Below we will use the following:

\begin{corollary} \label{cor:Gandy_basis}
	If $A \iS B$ then there is some $C\in [B]^\w$ such that $A \uiS C$ and $\w_1^C \le \w_1^B$. 
\end{corollary}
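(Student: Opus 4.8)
The plan is to combine Theorem~\ref{thm:a_lot_of_uniformisation} with a suitable basis theorem for nonempty $\Sigma^1_1(B)$ classes. By Theorem~\ref{thm:a_lot_of_uniformisation}, since $A \iS B$, there is a nonempty $\Sigma^1_1(B)$ class $\+E$ of sets $C \in [B]^\w$ with $A \uiS C$. What we need is a member of this class which does not push up the ordinal $\w_1^B$; this is exactly what the Gandy basis theorem provides.

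Concretely, recall the Gandy basis theorem (relativised to $B$): every nonempty $\Sigma^1_1(B)$ class of reals has a member $C$ such that $\w_1^C = \w_1^B$ and moreover $C$ is low for $\w_1$ over $B$ in the stronger sense that $C$ does not compute (even together with $B$) a well-ordering of order type $\w_1^B$; for our purposes we only need $\w_1^C \le \w_1^B$ (and hence $=$, since trivially $\w_1^B \le \w_1^C$ is false in general — rather $B \le_\Tur C$ fails, so we just assert $\w_1^C \le \w_1^B$ as in the statement). Applying this to the class $\+E$ from Theorem~\ref{thm:a_lot_of_uniformisation} yields $C \in [B]^\w$ with $A \uiS C$ and $\w_1^C \le \w_1^B$, which is precisely the conclusion.

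There is essentially no obstacle here beyond quoting the right theorem: the only thing to check is that the Gandy basis theorem applies, i.e.\ that $\+E$ is genuinely $\Sigma^1_1(B)$ and nonempty, both of which are asserted by Theorem~\ref{thm:a_lot_of_uniformisation}. One should also note that the Gandy basis theorem is usually stated for $\Sigma^1_1$ classes in Baire or Cantor space; since $[B]^\w$ is a $\Pi^0_1(B)$ (indeed $\Delta^1_1(B)$) subset of Cantor space, a $\Sigma^1_1(B)$ subclass of it is still $\Sigma^1_1(B)$ as a subset of $2^\w$, so the theorem applies without modification. I would therefore present the proof as a two-line deduction: invoke Theorem~\ref{thm:a_lot_of_uniformisation} to get the class, then invoke the Gandy basis theorem (with a citation, e.g.\ to Sacks' book on higher recursion theory) to extract the desired $C$.
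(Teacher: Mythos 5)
Your proposal is correct and follows exactly the paper's own route: invoke \cref{thm:a_lot_of_uniformisation} to get a nonempty $\Sigma^1_1(B)$ class of suitable $C$'s, then apply the relativised Gandy basis theorem. One small clarification for precision: the Gandy basis theorem gives a $C$ in the class with $\w_1^{B\oplus C} = \w_1^B$ (not directly $\w_1^C = \w_1^B$, as your first formulation suggests), from which $\w_1^C \le \w_1^{B\oplus C} = \w_1^B$ follows immediately; this one-line deduction replaces your somewhat muddled parenthetical about whether equality holds.
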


The proof is a simple application of the Gandy basis theorem~\cite{Gandy:basis}, which in relativised form says that every nonempty $\Sigma^1_1(B)$ class contains a set~$C$ with $\w_1^{B\oplus C} = \w_1^B$. Note that $\w_1^B\ge \w_1^A$ since $A$ is c.e.\ in~$B$.


\section{Completeness} 
\label{sec:completeness}

In this section we prove \cref{thm:main_final_completeness_theorem}. The argument is elaborate, and so we approach it gradually: we prove two weaker theorems with proofs of increasing complexity, each introducing another element of the final proof. Before we do this, though, we use the basic technique to prove \cref{thm:failure_of_improving_enumerability}\ref{item:failure_left}.

\subsection{Flexibility in constructing introreducible sets} 
\label{sub:flexibility_in_constructing_introreducible_sets}

In the proof of \cref{thm:failure_of_improving_enumerability}\ref{item:failure_left}, we need to construct sets~$A$ and~$B$ with $A \uiS B$ but $C \nleq_T^i B$ for all infinite $C \subseteq A$. By \cref{prop:uie_and_uiS}, this is equivalent to $A \uie B$. We do this by fixing in advance the enumeration functional~$\Phi$ witnessing $A \uie B$. This functional is defined to be ``maximally flexible'': no matter how we have already determined~$A$ and~$B$ so far, we can extend them with freedom to behave with respect to~$\Phi$ as we wish. 

\begin{proof}[Proof of \cref{thm:failure_of_improving_enumerability}\ref{item:failure_left}]
	As discussed, we build sets~$A$ and~$B$ such that $A \uie B$ but that $C\niT B$ for all $C\in [A]^\w$. 

	\medskip
	
	To define the enumeration functional~$\Phi$, we fix a sequence $(X_{i})_{i\in \w}$ of uniformly computable sets satisfying the following: 
	\begin{itemize}
		\item For every finite $E\subset \w$, the set
		\[
			\bigcap_{i\in E} X_i \cap \bigcap_{i\notin E} (\w\setminus X_i)
		\]
		is infinite. 
	\end{itemize}
	For example, we can choose~$X_i$ to be the collection of numbers divisible by the $(i+1)\tth$ prime number. 

	We then define~$\Phi$ as follows: the axioms of~$\Phi$ say that for all~$i$, the singleton subsets of $X_{i}$ suffice to enumerate~$i$. That is, for all~$Z$, $i\in \Phi(Z)$ if and only if $Z\cap X_{i}\ne \emptyset$. 
	
	Now, to get sets~$A$ and~$B$ such that $A\uie B$ via~$\Phi$, it suffices to ensure, for all~$i$, that: 
	\begin{sublemma}
		\item if $i\notin A$ then $B\cap X_{i} = \emptyset$; and 
		\item if $i\in A$ then $B \subseteq^* X_{i}$. 
	\end{sublemma}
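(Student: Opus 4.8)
The plan continues the set-up of the excerpt. With $\Phi$ fixed, I would build $A$ and $B$ together by a finite-extension argument that keeps (a) and (b) in force throughout — so that $\Phi$ witnesses $A \uie B$, and hence $A \uiS B$ by \cref{prop:uie_and_uiS} — while meeting, for each Turing functional, a requirement ensuring that no infinite $C\subseteq A$ has $C\iT B$. First one checks (routinely) that (a) and (b) really do give $\Phi(C)=A$ for every infinite $C\in[B]^\w$: if $i\in A$ then $B\subseteq^* X_i$, so the infinite set $C$ meets $X_i$ and $i\in\Phi(C)$; if $i\notin A$ then $B\cap X_i=\emptyset$, so $C\cap X_i=\emptyset$ and $i\notin\Phi(C)$ (one uses that $A$, hence $C$, is infinite). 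One should also note that (a) and (b) are consistent with $B$ being infinite, using the displayed property of the $X_i$: for every finite $A_0$ the set $\bigcap_{i\in A_0}X_i\cap\bigcap_{i\notin A_0}(\w\setminus X_i)$ is infinite, so $B$ can always be enlarged. Accordingly the conditions are pairs $(\alpha,\beta)$ with $\alpha\in 2^{<\w}$ approximating $\chi_A$ and $\beta\in[\w]^{<\w}$ approximating $B$, subject to the side condition that every element of $\beta$ lies in $\bigcap_{\alpha(i)=1}X_i\cap\bigcap_{\alpha(i)=0}(\w\setminus X_i)$; an extension $(\alpha',\beta')$ additionally requires every element of $\beta'\setminus\beta$ to lie in $X_i$ for each $i$ with $\alpha(i)=1$. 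The side condition secures (a) and this last clause secures (b); it is dense to lengthen $\alpha$, to enlarge $\beta$, and to throw a new index into $A$, so a sufficiently generic filter yields infinite $A$ and $B$ satisfying (a) and (b).

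For the diagonalisation it suffices, by \cref{thm:uniformization_theorem}, to ensure for each functional $\Gamma$ that $\Gamma$ does not witness $C\uiT D$ for any infinite $C\subseteq A$ and infinite $D\subseteq B$, since $C\iT B$ would produce such a $\Gamma$ and $D$. Here is where the ``maximal flexibility'' of $\Phi$ is exploited. A first point is that no single infinite subset of $B$ can serve as a universal diagonal witness: because $A\le_e D$ for every infinite $D\subseteq B$, each such $D$ already computes an infinite subset of $A$, so the subset of $B$ that fails to compute a given threatened $C$ must be chosen depending on $C$. The mechanism for $\Gamma$ at a condition $(\alpha,\beta)$ is that $\Gamma(D,x)$ reads only finitely much of $D$ and a uniform introcomputation pins its outputs down already on small initial data; so, having approximated how $\Gamma$ behaves on a suitably generic continuation of $B$, one reads off an index $x$ that $\Gamma$ would force into $C$, and — since the finite condition $(\alpha,\beta)$ constrains only finitely many indices while $C$ is infinite — chooses such an $x$ still free to be excluded from $A$, then declares $x\notin A$ and $x\notin B$, which is compatible with (a) and (b) and kills the threat.

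I expect the main obstacle to be carrying out that last manoeuvre uniformly against all threats at once. The requirement quantifies over all infinite $C\subseteq A$ and all infinite $D\subseteq B$, so it is genuinely $\Pi^1_1$; neither $C$ nor $D$ is determined at a finite stage, and the interaction with the fact that $A$ is enumeration-reducible to every infinite subset of $B$ means a bare finite-extension diagonalisation will not obviously close. Instead one has to organise the possible ``bad'' reservoirs $D\subseteq B$ along a tree, track how they split as initial segments grow, defeat them level by level, and certify by an ordinal that only finitely many indices are forced out of $A$ at each stage so that $A$ stays infinite. This tree-and-ordinal bookkeeping is, as the excerpt indicates, the same technique used in the proof of \cref{thm:main_final_completeness_theorem}.
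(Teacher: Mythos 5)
Your first half is right and matches the paper: conditions (a) and (b) do guarantee $\Phi(C)=A$ for every infinite $C\subseteq B$, and the finite-condition forcing you describe is essentially the first two coordinates of the paper's conditions. (One small repair: for it to be dense to declare $i\notin A$, the side condition must require the elements of $\beta$ to avoid $X_i$ for \emph{all} $i$ not yet placed in $A$, including $i\ge|\alpha|$; this is how the paper states its clause, and it is exactly what choosing new elements of $B$ from $\bigcap_{i\in\s}X_i\cap\bigcap_{i\notin\s}(\w\setminus X_i)$ achieves.)

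The gap is in the diagonalisation, which is the actual content of the proof, and your closing paragraph concedes it: you correctly observe that no single infinite subset of $B$ can serve as a universal witness, but the ``tree-and-ordinal bookkeeping'' you then invoke is neither carried out nor what the paper does here --- the technique this proof shares with \cref{sec:completeness} is the maximally flexible functional $\Phi$, not the ordinal machinery, and the paper's argument is a pure finite-extension argument. The missing idea is to add a \emph{third} generic object: a Cohen-generic infinite subset $D$ of $B$, built simultaneously as a coordinate $\rho$ of the conditions, and to diagonalise against the countably many \emph{pairs} of Turing functionals $(\Psi,\Gamma)$, ensuring that $\Psi(B)$ and $\Gamma(D)$ are never both total and equal to a common infinite subset of $A$. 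This suffices because any infinite $C\subseteq A$ with $C\iT B$ would be computed by both $B$ and $D$, so some pair $(\Psi,\Gamma)$ would violate the requirement. Each such requirement is then met by finite-extension amalgamation, exploiting the flexibility of $\Phi$: given a condition $p$ forcing $\Psi(B)$ total and infinite and $\Gamma(D)$ total and contained in $A$, find $i>|\s^p|$ and $q\le p$ with $\Psi(\tau^q,i)\converge=1$; then pad, i.e.\ pass to $(\s^p\conc 0^k,\tau^p\conc 0^k,\rho^p)$ for large $k$, which keeps $i$ out of $A$ while leaving the $D$-coordinate untouched, so some further extension $r$ forces $\Gamma(\rho^r,i)\converge=0$; finally merge $q$ and $r$ --- their $B$-parts occupy disjoint intervals and the union of their $A$-parts remains consistent with conditions (a) and (b) --- to obtain a single extension of $p$ forcing $\Psi(B,i)\ne\Gamma(D,i)$. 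Without some such device the requirement you formulated remains genuinely $\Pi^1_1$ and the construction does not close.
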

	We obtain such~$A$ and~$B$ by forcing with finite conditions: a pair $(\s,\tau)$ determines initial segments of~$A$ and~$B$; it is admissible if it does not violate~(a); and to ensure~(b), for all~$i$ which~$\s$ ensures are in~$A$, all future additions to~$B$ beyond~$\tau$ must be in $X_{i}$. We then also construct a Cohen subset~$D$ of~$B$ and ensure that no infinite subset of~$A$ is computable from both~$B$ and~$D$. 

	Of course, we can construct all of these in one go, so the notion of forcing we describe adds all sets at once. For strings $\s,\tau\in 2^{<\w}$, we use $\s\preceq \tau$ to denote string extension, but we also use the strings to denote the finite sets they determine: we write $i\in \s$ if $i<|\s|$ and $\s(i)=1$; we write $\s\subseteq \tau$ if for all~$i$, $i\in \s$ implies $i\in \tau$, and so on. 

	The forcing notion~$\PP$ consists of triples $(\s,\tau,\rho)\in (2^{<\w})^3$ satisfying:
	\begin{orderedlist}
		\item for all $i\notin \s$, $\tau \cap X_i = \emptyset$;
		\item $\rho\subseteq \tau$. 
	\end{orderedlist}
	If $p = (\s,\tau,\rho)$ is a condition then we write $\s^p = \s$, $\tau^p = \tau$, and $\rho^p = \rho$. A condition~$q$ extends a condition~$p$ if:
	\begin{orderedlist}
		\item $\s^p\preceq \s^q$, $\tau^p\preceq  \tau^q$, and  $\rho^p\preceq  \rho^q$; and
		\item for all $i\in \s^p$, $\tau^q \setminus \tau^p\subset X_i$. 
	\end{orderedlist}
	Observe that this is indeed a partial ordering. A sufficiently generic filter~$\GG$ determines the sets $A[\GG] = \bigcup \left\{ \s^p \,:\,  p\in \GG \right\}$, $B[\GG] = \bigcup \left\{ \tau^p \,:\,  p\in \GG \right\}$ and $D[\GG] = \bigcup \left\{ \rho^p \,:\,  p\in \GG \right\}$. 

	First, observe that the empty condition forces that~$A$ is infinite; for every condition~$p$, $(\s^p\conc 1, \tau^p,\rho^p)$ is an extension of~$p$. The empty condition also forces that~$B$ and~$D$ are infinite: for every condition~$p$, the ``freeness'' property of the sets~$X_i$ allows us to choose some $k>|\tau^p|$ in $X_i$ for all $i\in \s^p$ but outside~$X_i$ for all other~$i$'s, and so $(\s^p,\tau^p\cup \{k\}, \rho^p\cup \{k\})$ is an extension of~$p$. Further, note that for every condition~$p$ and every~$k$, $(\s^p\conc 0^k, \tau^p\conc 0^k, \rho^p)$ is an extension of~$p$. 

	As explained above, the empty condition forces that $A \uie B$ via~$\Phi$. Now, given two Turing functionals~$\Psi$ and~$\Gamma$, we show that the empty condition forces that it is not the case that $\Psi(B)= \Gamma(D)$ are total and equal an infinite subset of~$A$. 

	To see this, let~$p$ be any condition; we may assume that~$p$ forces that:
	\begin{orderedlist}
		\item $\Psi(B)$ is total and infinite; and
		\item $\Gamma(D)$ is total and a subset of~$A$. 
	\end{orderedlist}
	By~(i), there is some $i> |\s^p|$ and a condition $q\le_\PP p$ such that $\Psi(\tau^q,i)\converge = 1$. Let~$k > |\tau^q|,i$; as mentioned, $(\s^p\conc 0^k, \tau^p\conc 0^k,\rho^p)$ is a condition extending~$p$, and so by~(ii), we can find an extension~$r$ of that condition such that $\Gamma(\rho^r,i)\converge = 0$, noting that $i\notin \s^r$.

	Because $\tau^r \cap [|\tau^p|,|\tau^q|) = \emptyset$, we can take a string $\tau^*\succeq \tau^q$ such that $\tau^* = \tau^q \cup \tau^r$. It follows that $\rho^r\subseteq \tau^*$. We also let~$\s^*$ be a string extending~$\s^p$ satisfying $\s^* = \s^q\cup \s^r$. Then $(\s^*,\tau^*,\rho^r)$ is a condition, extends~$p$, and forces that $\Psi(B,i)\ne \Gamma(D,i)$. 
\end{proof}

\subsection{A first completeness result} 
\label{sub:a_first_completeness_result}

We turn to the proof of~\cref{thm:main_final_completeness_theorem}. We show the following:

\begin{proposition} \label{prop:main_completeness_proposition}
	Uniformly, given a linear ordering~$\+L$, we can construct a set~$A$ which is $\Pi^0_1(\+L)$, such that:
	\begin{itemize}
	 	\item If~$\+L$ is well-founded, then~$A$ is uniformly introreducible \emph{relative to~$\+L$}: there is a functional~$\Phi$ such that for every infinite $Z\subseteq A$, $\Phi(Z,\+L) = A$. 
	 	\item If~$\+L$ is ill-founded, then~$A$ is not introreducible relative to~$\+L$.
	 \end{itemize}
\end{proposition}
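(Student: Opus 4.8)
The plan is to relativise the entire argument to $\+L$: we regard $\+L$ as a computable linear order on $\w$, build $A$ as a $\Pi^0_1$ set via a computable $\subseteq$-decreasing approximation $(A_s)_{s\in\w}$ with $A_0=\w$ and $A=\bigcap_s A_s$, and define the uniform reduction $\Phi$ alongside the construction; since every step is explicitly computable from $\+L$, feeding $\+L$ back in gives the statement uniformly, and $A$ is $\Pi^0_1(\+L)$ by fiat. The engine is a \emph{conditional self-modulus}: we want $A$ arranged so that for every infinite $Z\subseteq A$ the principal function $p_Z$, together with $\+L$, majorises the stages by which the construction has finished modifying the coordinates that matter for reconstructing $A$; then the c.e.-in-$\+L$ enumeration of $\overline A$ can be ``clocked'' by $Z$, and $A$ becomes uniformly introreducible relative to $\+L$. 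The whole difficulty is to make this clocking succeed \emph{exactly} when $\+L$ is well-founded.

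The architecture I would use distributes the coding along the tree $T_{\+L}$ of finite strictly $<_{\+L}$-descending sequences — a computable tree which is well-founded iff $\+L$ is — assigning to each node $\sigma\in T_{\+L}$ a block $I_\sigma\subseteq\w$, with the $I_\sigma$ pairwise disjoint and laid out so that every branch meets infinitely many blocks and blocks nearer the root occur earlier. On $I_\sigma$ we place Dekker-style redundant data: $A\cap I_\sigma$ is meant to code a long initial segment of the characteristic function of $A$ on the blocks preceding $I_\sigma$. However, this data is committed only gradually, \emph{capped by the current value of the stage-by-stage rank approximation} $r_s(\ell)$ of the last entry $\ell$ of $\sigma$: while $r_s(\ell)$ is still climbing, $I_\sigma$ carries a provisional value and coordinates are enumerated out of $A$ as the cap is revised. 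Interleaved with this, for each Turing functional $\Psi$ and each branch $b$ of $T_{\+L}$ we reserve a small permanent ``header'' $H_\sigma\subseteq A\cap I_\sigma$ on every $\sigma\prec b$ and, in bounded increments at each node, diagonalise to force $\Psi(Z_b\oplus\+L)\neq A$, where $Z_b:=\bigcup_{\sigma\prec b}H_\sigma$; by design the full diagonalisation along $b$ is completed precisely when $b$ is infinite. Finally, the lengths and positions of the $I_\sigma$ are fixed by a recursion down $T_{\+L}$ so that any subset of $A$ reaching $I_\sigma$ is automatically sparse enough that its principal function has passed the last stage at which $I_\sigma$ changed — this sparsity is the self-modulus trick, and it is what turns $p_Z$ into the required clock.

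For the verification when $\+L$ is well-founded: every branch of $T_{\+L}$ is finite, no diagonalisation is ever completed, and one shows by transfinite induction along the ranks that the content of each $I_\sigma$ becomes permanent (the cap on an infinite-rank node eventually exceeds the relevant coding length, so its ``recipe'' stabilises at a $\+L$-detectable finite stage; finite-rank nodes stabilise by the induction hypothesis on the predecessor blocks). Now let $Z\subseteq A$ be infinite. Since the blocks are finite, $Z$ meets blocks arbitrarily far out in the layout; since $p_Z$ has outrun the relevant stabilisation stages, $Z$ may legitimately use $\+L$ to decode the committed Dekker segment on each such block, recovering arbitrarily long initial segments of $A$ and hence $A$ itself. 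The decoding does not depend on $Z$, so it is a single functional $\Phi$ with $\Phi(Z,\+L)=A$ for every infinite $Z\subseteq A$.

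For the verification when $\+L$ is ill-founded: fix a genuine descending path $P=(\ell_0>_{\+L}\ell_1>_{\+L}\cdots)$. Along the nodes $\sigma\prec P$ the approximations $r_s(\ell_i)$ climb without bound, the blocks $I_\sigma$ ($\sigma\prec P$) never commit, and the diagonalisation along $P$ is carried out infinitely often against every $\Psi$; the headers $H_\sigma$ give an infinite $Z_P\subseteq A$, and the completed diagonalisation shows directly (a finite-extension argument, using that at the $P$-blocks we never lose control of $A$) that $A\nle_\Tur Z_P\oplus\+L$, so $A$ is not introreducible relative to $\+L$. The main obstacle throughout — and the reason a flat ``one block per element of $\+L$'' construction fails — is exactly that rank approximations need not stabilise at a natural number once infinite ordinals appear (e.g. for well-founded $\+L$ of order type $\w+\w$), so ``settled'' cannot be read off in a single sweep; layering the Dekker coding and the diagonalisation along $T_{\+L}$, with only a bounded amount of each done at every node, is what makes $\Pi^0_1(\+L)$-ness, well-founded introreducibility, and ill-founded non-introreducibility simultaneously compatible, and this balancing act is the heart of the proof.
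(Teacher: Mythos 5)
Your architecture is genuinely different from the paper's, but it contains a gap that I don't think can be repaired as stated. The engine of your construction is a ``stage-by-stage rank approximation $r_s(\ell)$'' whose limiting behaviour is supposed to tell the construction how to treat the block $I_\sigma$: in your well-founded verification you say that for a node of infinite rank the cap ``eventually exceeds the relevant coding length, so its recipe stabilises,'' while in your ill-founded verification you say that along a descending path ``the approximations $r_s(\ell_i)$ climb without bound'' and ``the blocks never commit.'' These are the same limiting behaviour ($r_s(\ell)\to\infty$) assigned opposite outcomes. No arithmetic-in-$\+L$ approximation can separate elements of infinite ordinal rank from elements of the ill-founded part --- that separation is exactly the $\Pi^1_1$-complete fact the theorem is encoding --- so any construction whose dynamics branch on it cannot be carried out. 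The paper sidesteps this entirely: the construction of $A$ is identical in both cases (a priority argument meeting $\+L$-computable dense sets), and well- versus ill-foundedness is detected only in the \emph{verification}, via a static combinatorial device. Each $y\in A$ carries a label $r(y)\in\+L$ (membership in a computable partition $X_\beta$) and codes, via a second partition $Y_F$, the finite set $A\cap[0,\max]$ below it; the decoding functional $\Phi$ acts only on pairs $x<y$ in the oracle with $r(x)\le_{\+L} r(y)$. Well-foundedness of $\+L$ guarantees every infinite $Z\subseteq A$ contains arbitrarily large such pairs (an infinite set cannot be strictly $<_{\+L}$-decreasing in label), which gives totality; a self-consistency condition on the forcing gives correctness. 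Your ``sparsity beats settling time'' layout has the additional circularity that for a co-c.e.\ set the settling stage of $I_\sigma$ depends on non-computable convergence events in the diagonalisation, so the positions of the blocks cannot be fixed in advance to outrun it.

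The second missing ingredient is in the ill-founded case. You assert that the diagonalisation along a true descending path $P$ is ``completed precisely when $b$ is infinite'' and then closed off ``directly'' by a finite-extension argument. But which nodes of $T_{\+L}$ extend to infinite descending sequences is $\Sigma^1_1$-complete, so the construction cannot know where to place effort that will accumulate along a real path, and conversely the bounded diagonalisation increments you perform at \emph{every} node still occur when $\+L$ is well-founded, where you give no argument that these infinitely many $A$-changes are compatible with the coding. The paper's resolution of exactly this tension is its most delicate step: it first replaces $\+L$ by the Kleene--Brouwer ordering of the tree of double descending sequences against a Harrison ordering, so that the well-founded part of $\+L$ is not hyperarithmetic; it then proves that the relevant dense sets $\+E_\beta$ are met for every well-founded $\beta$, and concludes by \emph{overspill} that some $\+E_\beta$ with $\beta$ ill-founded is also met, which is what produces the single subset $Z$ with $A\nleq_{\Tur}Z\oplus\+L$. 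Your proposal has no counterpart to this overspill mechanism, and without it the non-introreducibility half does not go through.
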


This suffices:

\begin{proof}[Proof of~\cref{thm:main_final_completeness_theorem}, given~\cref{prop:main_completeness_proposition}]
	Given a linear ordering~$\+L$, let~$A$ be the set given by~\cref{prop:main_completeness_proposition}. Also, uniformly obtain a uniformly introreducible set~$R$ Turing equivalent to~$\+L$ (for example, a Dekker set --- the set of finite initial segments of some real coding~$\+L$). Let $\pi\colon \w\to R$ be an $\+L$-computable bijection; let $B = \pi[A]$. 

	If~$\+L$ is ill-founded, then there is some infinite $S\subseteq A$ such that $A \nle_\Tur S\oplus \+L$; then $\pi[S]$ is an infinite subset of~$B$ which does not compute~$B$ (even relative to~$\+L$). On the other hand, if~$\+L$ is well-founded, then given an infinite $S\subseteq B$, we can first uniformly compute~$R$, and so~$\+L$, and then with~$\+L$ compute~$\pi^{-1}[S]$, then~$A$, and then~$B$. 
\end{proof}

As we build toward the proof of \cref{prop:main_completeness_proposition}, we will first give a proof of a weaker completeness result, introducing important ingredients of the proof. For the rest of the section, to avoid excessive notation, we assume that the linear ordering~$\+L$ is computable; the argument fully rleativises. 

\begin{proposition} \label{prop:completeness1}
	Given a computable, infinite linear ordering~$\+L$ we can effectively obtain a~$\Delta^0_2$ set~$A$ which is uniformly introreducible if and only if~$\+L$ is well-founded. 
\end{proposition}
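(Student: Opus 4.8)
The plan is to build $A$ by a $\Delta^0_2$ approximation driven by the attempt to climb $\+L$. The idea, following the Dekker/self-modulus intuitions from the introduction, is that $A$ should be (Turing equivalent to) something like a "timed" coding along $\+L$: we want infinite subsets of $A$ to be able to recover $A$ exactly when $\+L$ is well-founded, because a descending sequence in $\+L$ should be exploitable to destroy a putative introreduction. Concretely, I would arrange $A = \bigoplus_{\ell \in \+L} A_\ell$ (a computable column decomposition indexed by the field of $\+L$), where each column $A_\ell$ is built so that, if an infinite subset $Z \subseteq A$ "misses" too much of the columns below $\ell$, then on column $\ell$ we inject a disagreement that $Z$ cannot predict. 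The approximation is $\Delta^0_2$ because at each stage we guess the current $\le_\+L$-least element that still looks problematic, and this guess changes finitely often along any branch but can change infinitely often if $\+L$ is ill-founded.

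The key steps, in order, are: (1) Fix a computable list of all Turing functionals $(\Phi_e)$; the requirement $R_e$ says: if $\+L$ is well-founded then $\Phi_e$ is not a counterexample to uniform introreducibility via the designated functional — actually it is cleaner to designate a single functional $\Gamma$ witnessing uniform introreducibility in the well-founded case and use the $R_e$'s only to diagonalize in the ill-founded case. So: designate $\Gamma$ which, from any infinite $Z \subseteq A$, reads off enough of $A$ by a self-modulus argument (the amount of $Z$ seen bounds the stage to which the $\Delta^0_2$ approximation to $A$ has settled, exactly as in the settling-time example). (2) Build the approximation $(A_s)$ so that the "settling modulus" of $A \restriction n$ is dominated by the principal function of $A$ itself on a tail — this is the positive direction and makes $\Gamma$ work whenever the approximation stabilizes columnwise, which happens iff $\+L$ is well-founded (an infinite descending sequence forces infinitely many columns to be re-injured). (3) For the negative direction, when $\+L$ is ill-founded, use an infinite descending sequence $\ell_0 >_\+L \ell_1 >_\+L \cdots$ to define an infinite $Z \subseteq A$ that omits, from each column $A_{\ell_k}$, precisely the elements that witnessed the disagreements, so that $Z$ cannot recover $A$ — and check that $Z$ is genuinely infinite and genuinely a subset.

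The main obstacle I expect is step (2)/(3) interaction: making the coding simultaneously (a) recoverable from *every* infinite subset in the well-founded case — not just sparse ones — and (b) defeatable in the ill-founded case. The Dekker-set trick handles "recoverable from every infinite subset" but is too rigid to be defeatable; the self-modulus trick is defeatable (thin out past the modulus) but a priori only handles *sparse* subsets. The resolution, which is the heart of the construction, is to combine them: code $A$ so that an arbitrary infinite subset $Z$ automatically contains arbitrarily long initial segments of the *relevant column structure* (Dekker-style), which then lets $Z$ compute a function dominating the settling modulus of $A$ (self-modulus style), hence compute $A$; but the well-foundedness of $\+L$ is exactly what guarantees the settling modulus is total/finite. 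When $\+L$ is ill-founded the settling modulus along the descending branch is "$+\infty$", and the descending sequence lets us build $Z$ avoiding the Dekker-segments on the bad columns, breaking both mechanisms at once. I would also need the uniformity-in-$\+L$ bookkeeping to be routine (it is, since everything is primitive-recursive in $\+L$ plus the $\emptyset'$-oracle for the approximation), and I would invoke nothing beyond the basic forcing/self-modulus facts already set up in Sections 2–3.
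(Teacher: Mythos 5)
Your high-level intuition---that ranks along $\+L$ should gate the recovery of $A$, and that an infinite descending sequence should be exploitable to build a bad subset---matches the paper's, but the concrete mechanism you propose does not work. The central problem is that your positive and negative mechanisms are in direct conflict. If, as in your step (2), the settling modulus of the $\Delta^0_2$ approximation to $A$ is dominated by $p_A$ on a tail, then \emph{every} infinite $Z\subseteq A$ satisfies $p_Z\ge p_A$, hence dominates the modulus and computes $A$: the self-modulus mechanism cannot be defeated by passing to subsets (thinning only makes the principal function larger), so $A$ would be uniformly introreducible regardless of $\+L$. Your escape route is to let the approximation fail to settle columnwise when $\+L$ is ill-founded, but then $A$ is not $\Delta^0_2$ in that case, contradicting the statement being proved. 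The paper avoids this by making the coding conditional on a combinatorial event rather than on domination: each $x\in\w$ carries a rank $r(x)\in\+L$ (via a partition $(X_\beta)$) and each $y$ also codes a finite set (Dekker-style, via a second partition $(Y_F)$), but the designated functional $\Phi$ accepts the code carried by $y$ only when witnessed by some $x<y$ in the oracle with $r(x)\le r(y)$. Well-foundedness guarantees every infinite subset contains arbitrarily late non-descending pairs of ranks, so $\Phi$ is total on all subsets of $A$; ill-foundedness lets one pick an infinite $Z\subseteq A$ with strictly descending ranks, on which $\Phi$ converges nowhere. Consistency of the codes with $A$ is arranged not by a modulus but by taking $A$ to be $1$-generic for the computable partial order of finite sets that are self-consistent with respect to $\Phi$.

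Second, even granting a bad subset for the designated functional, you have only refuted that your $\Gamma$ witnesses uniform introreducibility; to conclude $A\nuiT A$ you must, for \emph{every} functional $\Psi$, exhibit an infinite subset on which $\Psi$ fails. Your requirements $R_e$ gesture at this but supply no mechanism, and the construction cannot know during its $\Delta^0_2$ run whether $\+L$ is ill-founded. In the paper this is a verification argument rather than a construction step: assuming $\Psi(Y)=A$ for all $Y\in[A]^\w$, the descending-rank set $Z$ is used to show that the c.e.\ set of conditions $F$ with $\Psi(F,n)\converge\ne F(n)$ is dense along the generic filter, contradicting $1$-genericity. Without a substitute for both of these ideas---a coding whose activation, not whose speed, is tied to well-foundedness, and a genericity (or equivalent) argument handling arbitrary $\Psi$---your outline does not close.
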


Thus, the set of  $\Delta^0_2$-indices for introreducible sets is $\Pi^1_1$-complete (with $m$-reductions).

\begin{proof}
	We may assume that the universe of~$\+L$ is~$\w$, but we denote elements of~$\+L$ with lowercase Greek letters. 

	We start with uniformly computable sets $(X_\beta)_{\beta\in \+L}$ and $(Y_F)_{F\in [\w]^{<\w}}$ satisfying:
	\begin{orderedlist}
		\item $(X_\beta)_{\beta\in \+L}$ forms a partition of~$\w$, and so does $(Y_F)_{F\in [\w]^{<\w}}$; 
		\item For every $\beta\in \+L$ and $F\in [\w]^{<\w}$, $X_\beta \cap Y_F$ is infinite. 
	\end{orderedlist}
	For $x\in \w$ we let $r(x)$ be the unique~$\beta$ such that $x\in X_\beta$ (we think of $r(x)$ as a ``rank'' of~$x$). We write $x \le_r y$ if $r(x) \le r(y)$, and $x <_r y$ if $r(x) < r(y)$. We then define the functional~$\Phi$ as follows:
	\begin{itemize}
		\item For a set~$Z$ and $n\in \w$, if there are $x,y\in Z$ such that $n<x<y$ and $x \le_r y$, then we set $\Phi(Z,n) = F(n)$, where~$F$ is the unique finite set such that $y\in Y_F$. 
	\end{itemize}

	We will build a $\Delta^0_2$ set~$A$ such that if~$\+L$ is well-founded then $A \uiT A$ by~$\Phi$. The first thing to notice is that~$\Phi$ only uses positive information from its oracle. This is not surprising, as \cref{prop:uie_and_uiS} implies that $A \uiT B$ if and only if $A\oplus \overline{A} \uie B$. The second thing to notice is that there are many sets~$Z$ on which~$\Phi$ is inconsistent: for some~$n$ we have $\Phi(Z,n)=0$ and $\Phi(Z,n)=1$. In building~$A$, we will need to ensure that $\Phi(A)$ is consistent (and so $\Phi(Z)$ is consistent for all $Z\subseteq A$). Thirdly, we note that given a finite set~$E$ we can compute the partial (possibly multivalued) function~$\Phi(E)$; we have $\dom \Phi(E) \subset \max E$ by the requirement that a convergence of $\Phi(E,n)$ can only be given by elements $x,y\in E$ with $n<x<y$. 

	\medskip
	
	We use a notion of forcing~$\PP= \PP_{\+L}$: the conditions in~$\PP$ are finite sets~$E$ which are \emph{self-consistent} with respect to~$\Phi$, meaning that $\Phi(E)$ is consistent (uni-valued) and agrees with~$E$: for all $n$, if $\Phi(E,n)\converge$ then $\Phi(E,n)= E(n)$. The conditions in~$\PP$ are ordered by end-extension. This notion of forcing is computable. It is not empty: any singleton is an element of~$\PP$, because $\Phi(\{x\},n)\diverge$ for all~$x$ and~$n$.

	The freeness property of the sets~$X_\beta$ and~$Y_F$ implies:

	\begin{claim} \label{clm:extension_in_P}
		If $E\in \PP$ and $y> \max E$ is in $Y_E$ then $E\cup \{y\}\in \PP$. 
	\end{claim}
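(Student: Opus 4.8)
The plan is to check directly the two defining requirements of membership in $\PP$ for the set $E' = E\cup\{y\}$: that $\Phi(E')$ is uni-valued, and that it agrees with the characteristic function of $E'$. The first thing I would record is a range restriction. Every convergence $\Phi(E',n)\converge$ is witnessed by a pair $x<z$ in $E'$ with $n<x<z$, so $n<\max E'$; and since $y>\max E$ we have $y=\max E'$ and $E'(n)=E(n)$ for every $n<\max E$. Hence for every $n$ in the domain of $\Phi(E')$ we have $E'(n)=E(n)$, and it suffices to show that whenever $\Phi(E',n)$ takes a value via some witness, that value equals $E(n)$.

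Next I would run a short case analysis on a witnessing pair $x<z$ in $E'$ (with $r(x)\le r(z)$) for a convergence $\Phi(E',n)$, recalling that the value produced is $F(n)$, where $F$ is the unique finite set with $z\in Y_F$. Since $y=\max E'$ and $x<z$, the element $x$ lies in $E$, and therefore $n<x\le\max E<y$. If also $z\in E$, then $(x,z)$ was already a witnessing pair for a convergence of $\Phi(E,n)$, and self-consistency of $E$ gives $F(n)=E(n)$. If instead $z=y$, then by the hypothesis $y\in Y_E$ we have $F=E$, so again $F(n)=E(n)$. In both cases the produced value is $E(n)=E'(n)$, which shows simultaneously that $\Phi(E')$ is uni-valued and that it agrees with $E'$; hence $E'\in\PP$.

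I do not expect a genuine obstacle: the statement is essentially a sanity check that $\PP$ and $\Phi$ were set up compatibly, the crucial design feature being that the $Y$-index of a freshly added element is exactly the condition to which it is appended, so adding it cannot create a new disagreement. The only care needed is in tracking the inequalities $n<x<z\le\max E<y$ to confirm that the new element $y$ can only ever play the role of the larger member of a witnessing pair and never perturbs the characteristic function on the (finite) domain of $\Phi$. The freeness property of the $X_\beta$ and $Y_F$ is what makes the hypothesis of the claim realizable (it guarantees arbitrarily large $y\in Y_E$), but it plays no role in the verification itself.
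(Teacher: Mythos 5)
Your proof is correct and follows essentially the same argument as the paper: you track the witnessing pair $x<z$, observe that $x$ must lie in $E$ so that $n<\max E$ and $(E\cup\{y\})(n)=E(n)$, and then split on whether $z\in E$ (use self-consistency of $E$) or $z=y$ (use $y\in Y_E$, so the $Y$-index is exactly $E$). The only cosmetic point is that your first paragraph asserts $E'(n)=E(n)$ for $n$ in the domain of $\Phi(E')$ before you have actually established $n<\max E$ (that follows only once you note $x\in E$ in the second paragraph), but the reasoning is all there.
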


	\begin{proof}
		Let~$E$ and~$y$ satisfy the assumption. Let $n\in \w$ and suppose that $\Phi(E\cup \{y\},n)\converge=i$; we need to show that $i = (E\cup \{y\})(n)$. Let $w,z\in E\cup \{y\}$ cause the convergence $\Phi(E\cup \{y\},n)\converge=i$: $w,z\in E\cup \{y\}$, $n<w<z$, $w\le_r z$ and $z\in Y_H$ for some finite set~$H$ such that $H(n)=i$. Now $w\le \max E$, so $n< \max E$, so $E(n) = (E\cup \{y\})(n)$. If $z\in E$ then $\Phi(E,n)\converge = i$ and must equal~$E(n)$, as $E\in \PP$. Otherwise $z = y$ and so $H = E$. 
	\end{proof}
	
	\smallskip

	Let~$A= A[\GG]$ be the subset of~$\w$ given by a filter~$\GG$ which is 1-generic for~$\PP$ (meets or avoids each c.e.\ subset of~$\PP$). Such~$A$ can be chosen to be~$\Delta^0_2$, uniformly in~$\+L$. Since every $Y_E$ is infinite, \cref{clm:extension_in_P} implies that~$A$ is infinite. Also, for all $W\subseteq A$ and all~$n$, if $\Phi(W,n)\converge$ then $\Phi(W,n) = A(n)$. 

	\smallskip

	If $\+L$ is well-founded, then $\Phi(Z)$ is total for all~$Z\in [A]^\w$. For let $Z\in [A]^\w$. The increasing enumeration of~$Z$ (or any of its tails) cannot be strictly decreasing for~$<_r$, so there are arbitrarily large $x<y$ in~$Z$ such that $x \le_r y$. Such~$x$ and~$y$ guarantee that $\Phi(Z,n)\converge$ for all $n<x$. 

	\smallskip
	
	Suppose that~$\+L$ is ill-founded. Fix some infinite $\+L$-decreasing sequence $\seq{\beta_i}_{i\in \w}$. The freeness property of the sets~$X_\beta$ and~$Y_E$, together with the 1-genericity of~$\GG$ and \cref{clm:extension_in_P}, imply that for every $\beta\in \+L$, $A\cap X_\beta$ is infinite. Thus, we can choose a sequence $x_0 < x_1 < \cdots$ of elements of~$A$ such that $x_i\in X_{\beta_i}$. Let $Z = \{ x_i\,:\, i\in \w\}$. The point of course is that for $x,y\in Z$ with $x<y$ we have $x>_r y$ and so $\{x,y\}$ does not give a $\Phi$-computation; indeed, $\Phi(Z)$ is defined on no input. 

	To show that $A \nuiT A$ we need to consider functionals other than $\Phi$. Suppose, for a contradiction, that for some functional~$\Psi$ we have $\Psi(Y) = A$ for all $Y\in [A]^\w$. Consider the c.e.\ set 
	\[
		\+D_\Psi = \left\{ F\in \PP \,:\,  (\exists n)\,\, \Psi(F,n)\converge \ne F(n)  \right\}. 
	\]
	If $F\in \+D_\Psi\cap \GG$ then as $F\prec A$ we get $\Psi(A)\ne A$. On the other hand, we show that every condition in~$\GG$ is extended by some condition in~$\+D_\Psi$, which will contradict the 1-genericity of~$\GG$. Let $E\in \GG$; let $y = \min (Z\setminus E)$ (note that $Z\setminus E = Z\cap (\max E,\infty)$ since $Z\subseteq A$ and $E\prec A$).  Let $Y = E\cup Z\setminus \{y\}$. As $Y\subseteq A$, by the assumption on~$\Psi$, $\Psi(Y,y)\converge = A(y) = 1$. There is therefore some finite $F\prec Y$ such that $\Psi(F,y)\converge = 1$. Since $E\prec Y$ we may assume that $E\prec F$. Since $y\notin Y$ we have $\Psi(F,y)\ne F(y)$. So~$F$ is the desired extension of~$E$ in~$\+D_\Psi$, once we show that $F\in \PP$. Let $n\in \w$, and suppose that $\Phi(F,n)\converge$, using a pair $w,z\in F$ with $w< z$ and $w \le_r z$. Then $w\in E$, so $n < \max E$, so as $F\subset A$ we have $\Phi(F,n) = A(n) = E(n) = F(n)$ as required. 
	
	 \smallskip
	 
	 We remark that the proof does \emph{not} show that if $\+L$ is ill-founded then for every functional~$\Psi$, $\GG\cap \+D_\Psi$ is nonempty. After all, that would show that $A \nle_\Tur A$. That~$\+D_\Psi$ is dense along~$\GG$ was proved only under the assumption that~$\Psi$ witnesses that $A \uiT A$. 
\end{proof}

\subsection{Removing uniformity} 
\label{sub:removing_uniformity}

To obtain the desired proposition, we need to improve the proof above in the following ways:
\begin{enumerate}
	\item Make~$A$ co-c.e.\ rather than $\Delta^0_2$; and
	\item Ensure that if $\+L$ is ill-founded, then $A \niT A$ rather than only $A \nuiT A$. 
\end{enumerate}

We now explain how to do the second. 

\begin{proposition} \label{prop:completeness2}
 	Given a computable, infinite linear ordering~$\+L$ we can uniformly obtain~$\Delta^0_2$ set~$A$ such that:
 	\begin{itemize}
 		\item if $\+L$ is well-founded then $A$ is uniformly introreducible; 
 		\item if $\+L$ is ill-founded then~$A$ is not introreducible. 
 	\end{itemize}
 \end{proposition}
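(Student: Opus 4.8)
The plan is to leave the forcing $\PP_{\+L}$ of \cref{prop:completeness1} — finite $\Phi$-self-consistent sets ordered by end-extension — completely untouched, and only enlarge the list of requirements met by the generic construction of $A = A[\GG]$. As there, the freeness of $(X_\beta)_{\beta\in\+L}$ and $(Y_F)_{F\in[\w]^{<\w}}$ together with \cref{clm:extension_in_P} will keep $A$ and every $A\cap X_\beta$ infinite and let $A$ be taken $\Delta^0_2$ uniformly in $\+L$; in the well-founded case nothing in the construction changes, so $A$ is uniformly introreducible via $\Phi$ for the usual reason (every infinite $W\subseteq A$ has, cofinally often, $x<y$ in $W$ with $x\le_r y$, so $\Phi(W)=A$). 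Thus the only task is to upgrade $A\nuiT A$ to $A\niT A$ when $\+L$ is ill-founded.

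To do this I would add, for each Turing functional $\Psi_e$, the single requirement
\[
 \mathcal{N}_e\colon\qquad \Psi_e(Z)\ne A \quad\text{for every infinite } Z\subseteq A \text{ on which } r \text{ strictly } <_{\+L}\text{-decreases.}
\]
This is vacuous when $\+L$ is well-founded, so it is compatible with introreducibility; and if the construction meets every $\mathcal{N}_e$, then when $\+L$ is ill-founded I can fix, non-uniformly, an infinite $<_{\+L}$-descending sequence $\langle\gamma_i\rangle_{i\in\w}$ and, using that each $A\cap X_{\gamma_i}$ is infinite, choose $z_0<z_1<\cdots$ with $z_i\in A\cap X_{\gamma_i}$; then $Z=\{z_i\}$ is an infinite subset of $A$ along which $r$ strictly decreases, so $A\nle_\Tur Z$, i.e.\ $A\niT A$. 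The point of formulating things via $\mathcal{N}_e$ rather than building a witness $Z$ directly is that the construction of $A$ is uniform in $\+L$ and so cannot tell whether, or along which path, $\+L$ has an infinite descent; it must therefore pre-empt \emph{every} candidate $Z$, the genuine witness being extracted only afterwards.

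Meeting $\mathcal{N}_e$ I would do by a strategy imitating the density argument at the end of the proof of \cref{prop:completeness1}. At a stage with current finite condition $E$, search for a $<_r$-decreasing finite set $\tau$ that may legally be adjoined to $A$ above $E$ and an input $n$ with $\Psi_e(\tau,n)\converge$, and then use the remaining slack to install a disagreement that persists along \emph{every} $<_r$-decreasing continuation: if $\Psi_e(\tau,n)\converge=1$ for some $n$ above all of $E\cup\tau$, force $n\notin A$ by end-extending past $n$ (keeping $\Phi$-self-consistency via \cref{clm:extension_in_P}) and commit, at all later stages, to keeping out of $A$ any element that would complete a $<_r$-decreasing $\tau'\subseteq A$ with $\Psi_e(\tau',n)\converge=1$; if only the value $0$ is available at a fresh input, arrange for the disputed point to enter $A$ (routing it through the tail of the $<_r$-decreasing set under construction, which lies in $A$); and if $\Psi_e(\tau,n)$ already contradicts the committed part of $E$, the requirement is met outright. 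Interleaving these strategies over $e$, $n$ and the countably many relevant $\tau$ with the infinitude requirements produces the desired $\Delta^0_2$ set.

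The hard part will be making this final argument airtight: one must show that the disagreements can genuinely be made \emph{permanent} — that after all the finitely-many-at-a-time forbiddings there is no infinite $<_r$-decreasing $Z\subseteq A$ on which $\Psi_e$ is total and agrees with $A$ — and that the accumulated restrictions never obstruct the infinitude requirements or $\Phi$-self-consistency. This is precisely where the freeness of $(X_\beta)$ and $(Y_F)$ is essential, exactly as \cref{clm:extension_in_P} is in \cref{prop:completeness1}: however many elements we have agreed to keep out of $A$ by a given stage, every bin $X_\beta$ and every $Y_F$ still has infinitely many admissible members, so there is always room to satisfy the outstanding requirements. The delicate bookkeeping of these commitments — which inputs are being protected, which sets are forbidden, and why this remains consistent — is the technical core of \cref{prop:completeness2}; once it is in place, the passage to \cref{prop:main_completeness_proposition} will only require redoing the construction with $A$ co-c.e.\ instead of $\Delta^0_2$.
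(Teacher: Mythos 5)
The well-founded half of your plan and the general shape of the ill-founded half (extract the witness $Z$ along a $<_r$-decreasing sequence only \emph{after} the construction) agree with the paper. The gap is in how you propose to meet $\mathcal{N}_e$. That requirement quantifies over \emph{all} infinite $<_r$-decreasing $Z\subseteq A$, of which there are continuum many, and a total $\Psi_e$ may need its disagreement with $A$ at a different input for each such $Z$; a single finite-extension action per $(e,n)$ cannot serve them all. Your proposed commitment is also pointing the wrong way: if you arrange $n\notin A$ and then forbid every element that would complete a decreasing $\tau'$ with $\Psi_e(\tau',n)\converge=1$, then every decreasing $Z\subseteq A$ \emph{not} passing through your chosen $\tau$ has $\Psi_e(Z,n)\in\{0,\diverge\}$, and $0=A(n)$ is agreement, not disagreement. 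Moreover the set of elements to be forbidden is determined by the adversarial $\Psi_e$, not by the bins $X_\beta,Y_F$, so it can be cofinal in some $X_\beta$ and collide with the infinitude requirements; the freeness of the $X_\beta$ and $Y_F$ gives you room to \emph{add} elements consistently with $\Phi$ (\cref{clm:extension_in_P}), but says nothing about being able to excise everything $\Psi_e$ touches. Finally, recall the remark at the end of the proof of \cref{prop:completeness1}: unconditional density of diagonalization sets would prove $A\nle_\Tur A$, so any correct argument must make the diagonalization conditional on a hypothesis about $\Psi_e$ (e.g.\ that it is total on the candidate $Z$), and your sketch has no such conditioning.

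What is missing is a mechanism for a $\Delta^0_2$ construction, uniform in $\+L$, to diagonalize along descending sequences it cannot see. The paper supplies three ingredients you would need analogues of: (i) preprocessing $\+L$ (via the tree of double descending sequences through a Harrison ordering) so that its well-founded part is not hyperarithmetic; (ii) a two-step forcing $\QQ$ whose conditions carry, alongside $E\in\PP$, a finite tree $\vphi$ assigning to \emph{each} finite descending sequence $\s$ its own candidate finite set $\vphi(\s)\subseteq E$ of matching ranks, so that the diagonalization against $\Psi$ is performed separately at each node rather than once per functional; and (iii) an overspill argument showing that the dense sets $\+E_{\s,\beta,\Psi}$ are met for some $\beta$ in the ill-founded part, after which a sufficiently generic descending branch $(\beta_i)$ yields the single $Z$ on which $\Psi(Z)\ne A$. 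Without (i) and (iii) your construction has no way to certify that any of its finitely many committed disagreements lies along a genuine infinite descent, and without (ii) it has no way to serve the distinct descending sequences with distinct witnesses. Pushing your strategy to the point where these issues are resolved essentially reconstructs the paper's proof.
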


\begin{proof}
	We extend the technique of the previous proof. Given~$\+L$, we will use the same sets $X_\beta$	and~$Y_F$ and use them to define the same functional~$\Phi$. We again let~$\PP$ denote the collection of finite sets~$E$ such that $\Phi(E)$ is consistent with~$E$, ordered by end-extension. The set~$A$ will again be 1-generic for~$\PP$.  

	This time, if $\+L$ is ill-founded, we need to find a single set $Z\in [A]^\w$ which does not compute~$A$. Recall that for $x\in \w$, $r(x)$ is the unique~$\beta$ such that $x\in X_\beta$. The set~$Z$ will again be $\{x_0 < x_1 < x_2 < \cdots\}$ where $x_0 >_r x_1 >_r x_2 >_r \cdots$, and so will be defined using some infinite decreasing sequence $(\beta_i)$ from~$\+L$. Because we will need to ensure that $\Psi(Z)\ne A$, this time we will need greater control in the choice of the sequence $(\beta_i)$ and the set~$Z$. Indeed, to actively diagonalise, we will need to work with initial segments of~$Z$ during the construction of~$A$. This is difficult, because any $(\beta_i)$ may be very complicated, certainly out of the grasp of $\emptyset'$. What we do with~$\emptyset'$ is to construct a tree of possible choices for initial segments of~$Z$ for any possible initial segment of $(\beta_i)$. 

	Another new ingredient is a use of an overspill argument to show that certain conditions exist. For that, we will need the well-founded part of~$\+L$ to not be hyperarithmetical. This is standard: given~$\+L$, we can replace it by the linear ordering obtained by the Kleene-Brouwer ordering of the tree of \emph{double descending sequences}, the tree of pairs $(\s,\tau)$ where $|\s|=|\tau|$, $\s$ is a descending sequence in~$\+L$, and $\tau$ is a descending sequence in some fixed Harrison linear ordering (a computable ill-founded linear ordering with no hyperarithmetic descending sequences). If $\+L$ is well-founded then the tree of double descending sequences is well-founded; if $\+L$ is ill-founded then the tree is ill-founded but any path gives an infinite descending sequence in the Harrison linear ordering and so cannot be hyperarithmetic. 

	\medskip
	
	We extend the notion of forcing~$\PP$ to be a 2-step iteration that starts with~$\PP$ and then adds a tree of subsets of the generic for~$\PP$. Combined, we define the notion of forcing~$\QQ$ as follows. The conditions are pairs $(E,\vphi)$, where $E\in \PP$, and $\vphi$ is a finite partial function satisfying:
	\begin{orderedlist}
		\item $\dom \vphi$ is a subtree of $\w^{<\w}$ consisting of descending sequences in~$\+L$. 
		\item For every $\s = \seq{\beta_1,\beta_2,\dots, \beta_k}\in \dom \vphi$, $\vphi(\s)$ is a finite set $F\subseteq E$, $F = \{x_1< x_2 < \cdots < x_k\}$ with $r(x_i)= \beta_i$ for $i=1,2,\dots, k$. 
		\item For $\s\preceq \tau$ in $\dom \vphi$, $\vphi(\tau)$ is an end-extension of $\vphi(\s)$. 
	\end{orderedlist}
	The ordering on~$\QQ$ is given by extension in both coordinates: $(F,\psi)$ extends $(E,\vphi)$ if $E\prec F$, $\dom \vphi\subseteq \dom \psi$ and $\psi\rest{\dom \vphi} = \vphi$. This notion of forcing is computable. 

	\medskip
	
	Let $\GG\subset \QQ$ be a 1-generic filter (computable from~$\emptyset'$). We define
	 $A = A[\GG] = \bigcup \left\{ E \,:\,  (\exists \vphi)\,\,(E,\vphi)\in \GG \right\}$ and
	  $f = f[\GG] = \bigcup \left\{ \vphi \,:\,  (\exists E)\,\,(E,\vphi)\in \GG \right\}$. 
	  Both~$A$ and~$f$ are~$\Delta^0_2$, and $A$ is 1-generic for~$\PP$; it is therefore infinite. Similarly, $\dom f$ is the tree~$T_{\+L}$ of all finite descending sequences in~$\+L$. We give the details. For every $\s\in T_{\+L}$ we show that the collection of conditions $(F,\psi)\in \QQ$ such that $\s\in \dom \psi$ is dense in~$\QQ$, and then appeal to the 1-genericity of~$\GG$. We prove this by induction on $|\s|$. Let $\s\in T_{\+L}$ such that $\s = \s^{-}\conc \seq{\beta}$. By induction, let $(E,\vphi)\in \QQ$ such that $\s^-\in \dom \vphi$. Choose some $y \in X_\beta \cap Y_E$ with $y > \max E$, $y >  \max \range \vphi(\s^-)$; then $E\cup \{y\}\in \PP$ (\cref{clm:extension_in_P}). Define~$\psi$ by extending~$\vphi$ to be defined on~$\s$, letting $\psi(\s) = \vphi(\s^-)\cup \{y\}$. Then the condition $(E\cup \{y\}, \psi)$ is an extension of $(E,\vphi)$ in~$\QQ$.

	  \smallskip
	  
	 If $\+L$ is well-founded, then as in the previous proof, $\Phi$ shows that $A \uiT A$. Suppose then that $\+L$ is ill-founded. Let~$\SSS \subset T_{\+L}$ be the subtree of $\s\in T_{\+L}$ which are entirely contained in the ill-founded part of~$\+L$. We take an infinite descending sequence $(\beta_i)$ in~$\+L$ which is \emph{sufficiently generic} for the partial ordering~$\SSS$. Of course, this partial ordering is complicated (may have degree $\+O$, the complete~$\Pi^1_1$ set) and~$(\beta_i)$ will need a Turing jump or two beyond that. We let $Z = Z [(\beta_i)] = \bigcup \left\{ f(\s) \,:\,  \s\prec (\beta_i) \right\}$. This is an infinite subset of~$A$. We will show that $Z\nge_\Tur A$. 

	 \smallskip

	 Fixing a Turing functional~$\Psi$, we need to show that $\Psi(Z)\ne A$. We show that $\force_{\SSS} \Psi(Z)\ne A$, and appeal to the genericity of $(\beta_i)$. Let
	 \[
	 	\+D = \left\{ \tau\in \SSS \,:\,   \Psi(f(\tau)) \perp A \right\},
	 \]
	 where $\Psi(F)\perp A$ if $\Psi(F,n)\converge \ne A(n)$ for some~$n$. Let $\s\in \SSS$ and suppose that $\s$ forces in~$\SSS$ that $\Psi(Z)$ is total; we find an extension of~$\s$ in~$\+D$. 
	 Let 
	 \[
	 	\+E = \left\{ (F,\theta) \in \QQ \,:\,  (\exists \tau\in \dom \theta)\,\,\tau\in \SSS \andd \tau\succeq \s \andd 
	 		\Psi(\theta(\tau))\perp F	 \right\}.
	 \]
	 We will show that $\GG\cap \+E\ne \emptyset$. This suffices: if $(F,\theta)\in \GG\cap \+E$, witnessed by~$\tau$, then~$\tau$ is an extension of~$\s$ in~$\+D$. 

	 \begin{claim} \label{clm:E_is_dense}
	 	$\+E$ is dense along~$\GG$. 
	 \end{claim}

	 \begin{proof}
		Let $(E,\vphi)\in \GG$; we find an extension of $(E,\vphi)$	in~$\+E$. Let $\beta > \s(0)= \max \range \s$ (we may assume that~$\+L$ has no last element). Since $\GG$ is 1-generic, by extending (and by \cref{clm:extension_in_P}), we may assume that there is some $y> \max E$ in~$X_\beta$ such that $E \cup \{y\}\prec A$. 

		Since $\s\force_{\SSS} ``\Psi(Z)\text{ is total}"$, there is some $\tau\succeq \s$ in~$\SSS$ such that $\Psi(f(\tau),y)\converge$. If $\Psi(f(\tau),y)=0$ then $\GG\cap \+E$ is nonempty. Suppose, then, that $\Psi(f(\tau),y) = A(y) = 1$. 

		Let $F = E\cup f(\tau)$. Note that $\beta\notin \range \tau$ (as $\beta > \s(0) = \tau(0)$ and~$\tau$ is descending in~$\+L$) so $y\notin f(\tau)$; so $y\notin F$. Also, since $E\prec A$ and $f(\tau)\subset A$, $E \prec F$. 

		We first argue that $F\in \PP$. Let~$n$ such that $\Phi(F,n)\converge=i$; let $z,w\in F$ generate this computation, so again $w<z$, $w\le_r z$. Since $F\subset A$, we have $i = A(n)$. For any $s,t\in f(\tau)$ we have $s<t \then s>_r t$, so we cannot have $w,z\in f(\tau)$. Hence $n<w \le \max E$; since $E\prec A$ and $E\prec F$, we have $A(n) = E(n) = F(n)$. 

		Hence $F$ extends~$E$ in~$\PP$. Now define~$\theta$ extending~$\vphi$ by setting $\theta(\rho) = f(\rho)$ for all $\rho\preceq \tau$. Note that~$\theta$ indeed extends~$\vphi$ as~$f$ extends~$\vphi$. To show that $(E,\theta)\in \QQ$ (and so extends $(E,\vphi)$) it suffices to check that for all $\rho\in \dom \theta$, $f(\rho)\subseteq F$. If $\rho\preceq \tau$ then this is by the definition of~$F$. If $\rho\in \dom \vphi$, then as $(E,\vphi)$ is a condition, we have $f(\rho)\subseteq E\subseteq F$. 

		Finally, since $y\notin F$ and $\Psi(\theta(\tau),y)=1$, it is seen that $(F,\theta)\in \+E$, and so is the required extension.
	 \end{proof}
	 
	The set $\+E$ is very complicated (as its definition refers to~$\SSS$, and so to the ill-founded part of~$\+L$), so we cannot directly appeal to the 1-genericity of~$\GG$ and density of~$\+E$ along~$\GG$ to show that $\GG\cap \+E$ is nonempty. We use overspill to overcome this problem. For any $\beta\in \+L$, let 
	\[
		\+E_\beta = \left\{ (F,\theta) \in \QQ \,:\,  (\exists \tau\in \dom \theta)\,\, \tau\succeq \s \andd 
	 		\Psi(\theta(\tau))\perp F \andd \min \range \tau >\beta   \right\}. 
	\]
	If $\beta$ is \emph{well-founded} (is in the well-founded part of~$\+L$), then $\+E\subseteq \+E_\beta$, and so by \cref{clm:E_is_dense}, $\+E_\beta$ is dense along~$\GG$. Each set~$\+E_\beta$ is computable, and so for well-founded~$\beta$ we have $\GG\cap \+E_\beta\ne \emptyset$. The property ``$\+E_\beta \cap \GG \ne \emptyset$'' is an arithmetic property of~$\beta$. Since the well-founded part of~$\+L$ is not even hyperarithmetic, it must be the case that for some ill-founded~$\beta$, $\+E_\beta\cap \GG\ne\emptyset$. But for ill-founded~$\beta$ we have $\+E_\beta\subseteq \+E$, which ends the proof. 
\end{proof}

\subsection{A priority argument} 
\label{sub:a_priority_argument}

As discussed, to prove \cref{prop:main_completeness_proposition}, we now need to show how to make~$A$ co-c.e.\ rather than merely~$\Delta^0_2$. We cannot expect to obtain~$A$ from a 1-generic filter for~$\PP$ since 1-generics do not compute noncomputable c.e.\ sets. However, we don't need the full power of 1-genericity to push through the preceding proof. We carefully list the specific dense sets that we aim to meet, and using a priority argument, we construct a just sufficiently generic set~$A$ which can be made co-c.e.

\begin{proof}[Proof of \cref{thm:main_final_completeness_theorem}]
	Building on the previous proofs, we use all of the ingredients: the sets $X_\beta$ and~$Y_F$, the functional~$\Phi$, and the partial orderings~$\PP$, $\QQ$ and~$\SSS$. We perform a finite-injury priority argument to build~$\GG$ (and hence~$A$ and~$f$). There are two types of requirements:
	\begin{description}

		\item[$P_{\s}$] For every $\s\in T_{\+L}$ (the tree of finite $\+L$-descending sequences), ensure that $\s\in \dom f$. 

		\item[$Q_{\s,\beta,\Psi}$] For every nonempty $\s\in T_{\+L}$, $\beta\in \+L$ and Turing functional~$\Psi$, let \[
		\+E_{\s,\beta,\Psi} = 
			\left\{  (F,\theta) \in \QQ \,:\,  (\exists \tau\in \dom \theta)\,\, \tau\succeq \s \andd 
	 		\Psi(\theta(\tau))\perp F \andd \min \range \tau >\beta     \right\}.
	\]  The requirement aims to make $\GG\cap \+E_{\s,\beta,\Psi}$ nonempty. 
	\end{description}

	We computably order all requirements in order-type~$\w$. We let~$R_e$ denote the $e\tth$ requirement on our list. The stronger requirements are those which appear earlier on the list. We index objects associated with a requirement by its location on the list. For example, if~$R_e$ is the requirement $P_{\s}$ then we write $\s_e = \s$. For brevity, if~$R_e$ is a $Q$-requirement then we write~$\+E_e$ for $\+E_{\s_e,\beta_e,\Psi_e}$. We make sure to order our requirements so that for every~$e$, if~$R_e$ is a $P$-requirement and $|\s_e|\ge 2$ then there is some $k<e$ such that~$R_k$ is a $P$-requirement and $\s_k = (\s_e)^-$ (the sequence obtained from~$\s_e$ by removing the last entry). 

	\medskip
	
	During the construction, at any stage~$s$, a $P$-requirement may be either \emph{waiting} or \emph{satisfied}; whereas a $Q$-requirement may be either \emph{waiting}, \emph{ready} or \emph{satisfied}. 

	At each stage~$s$ we will define a sequence $p_{0,s} \ge_\QQ p_{1,s} \ge_\QQ p_{2,s} \ge_\QQ \cdots \ge_\QQ p_{m(s),s}$ for some $m(s)\in \w$; we will write $p_{e,s} = (E_{e,s}, \vphi_{e,s})$. At stage~$s$ we let 
	\[
		A_s = E_{m(s),s} \cup (\max E_{m(s),s},\infty).
	\]
	
	We ensure that the following holds during the construction at every stage~$s$:
	\begin{sublemma}
		\item If~$R_e$ is not \emph{waiting} at stage~$s$ then $p_{e+1,s}$ is defined (i.e., $e<m(s)$). 
		\item If~$R_e$ is a $P$-requirement and~$p_{e+1,s}$ is defined, then $\s_e\in \dom \vphi_{e+1,s}$.
	\end{sublemma}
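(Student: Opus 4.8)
The plan is to establish (a) and (b) as properties maintained by the construction, by describing the stage action carefully enough that both become manifest. The key is that (a) is essentially \emph{definitional}: we will arrange the construction so that the only way $R_e$ leaves the \emph{waiting} state is by an action that simultaneously \emph{defines} (or redefines) $p_{e+1,s}$ as a genuine $\QQ$-extension of $p_{e,s}$. So I would first lay out, for each requirement type, precisely what it means to act: when a $P$-requirement $R_e$ (for $\s_e$) acts, it passes from \emph{waiting} to \emph{satisfied}, and its action is to choose a suitable witness $y$ and set $p_{e+1,s} = (E_{e,s}\cup\{y\},\psi)$ where $\psi$ extends $\vphi_{e,s}$ with $\psi(\s_e) = \vphi_{e,s}(\s_e^-)\cup\{y\}$ --- exactly the extension produced in the density argument for $\PP_\s$ in the proof of \cref{prop:completeness2}. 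When a $Q$-requirement $R_e$ acts it moves \emph{waiting}$\to$\emph{ready} (recording some $p_{e+1,s}$ extending $p_{e,s}$ and opening a search for a diagonalizing $\tau$) and later \emph{ready}$\to$\emph{satisfied} (again possibly redefining $p_{e+1,s}$ to absorb the witness into $F$, as in \cref{clm:E_is_dense}). In every one of these sub-cases the newly set $p_{e+1,s}$ is defined and is a $\QQ$-extension of $p_{e,s}$, so $e < m(s)$; conversely while $R_e$ is \emph{waiting} we simply never define $p_{e+1,s}$ (and truncate the list there if it was destroyed by a higher-priority injury). This gives (a).

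For (b): whenever $R_e$ is a $P$-requirement and $p_{e+1,s}$ is defined, $R_e$ is not \emph{waiting}, so by the description above $R_e$ has acted at some stage $t\le s$, and at that stage its action set $p_{e+1,t}$ with $\s_e\in\dom\vphi_{e+1,t}$. The remaining point is \emph{persistence}: if $p_{e+1,s}$ is still defined at the later stage $s$, then $\dom\vphi_{e+1,s}\supseteq\dom\vphi_{e+1,t}$, because the only way $p_{e+1,\cdot}$ changes without being destroyed is by being extended in $\QQ$ (by lower-priority actions), and $\QQ$-extension never shrinks $\dom\vphi$. If $p_{e+1,\cdot}$ \emph{is} destroyed (a higher-priority injury), then $R_e$ is re-initialized to \emph{waiting} and $p_{e+1}$ becomes undefined, so the hypothesis of (b) fails at that stage; when $R_e$ next acts it re-establishes $\s_e\in\dom\vphi_{e+1}$ afresh. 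Here I would invoke the ordering condition on the requirement list --- that $(\s_e)^-=\s_k$ for some $P$-requirement $R_k$ with $k<e$ --- to guarantee that at the stage $R_e$ wishes to act, $p_{k+1,s}$ is already defined with $\s_e^-\in\dom\vphi_{k+1,s}$, so there genuinely is a $\vphi_{e,s}(\s_e^-)$ to extend; this is what lets the $P$-action go through and is why (b) for $R_k$ feeds into the action of $R_e$.

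I would then run these as a simultaneous induction on stages (with a sub-induction on $e$ at each stage), checking that each elementary action preserves both (a) and (b) and that the finite-injury bookkeeping (a higher-priority action at position $<e$ truncates $m(s)$ to below $e$ and resets all $R_{e'}$ with $e'\ge e$ to \emph{waiting}) also preserves them vacuously. The main obstacle I anticipate is not (a), which is bookkeeping, but making the \emph{persistence} claim airtight: one must be sure that the only modifications to an already-defined $p_{e+1,s}$ are $\QQ$-extensions coming from lower-priority activity (never a lateral replacement that could drop an entry of $\dom\vphi$), and that the re-initialization protocol cleanly undefines $p_{e+1}$ rather than leaving a stale value around. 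Pinning down exactly which stage-events can touch $p_{e+1,s}$, and confirming each is monotone on $\dom\vphi$ (or else a full reset), is the delicate part; everything else is a direct reading-off from the density constructions already carried out in \cref{prop:completeness2}.
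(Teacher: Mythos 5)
Your proposal is correct and matches the paper's own verification, which is likewise a routine induction on stages: each action of a requirement $R_e$ either leaves $p_{e+1}$ untouched, redefines it while explicitly putting $\s_e$ into $\dom\vphi_{e+1}$, or undefines it while resetting $R_e$ to \emph{waiting}, and the ordering condition on the list together with (a) and (b) for the predecessor requirement guarantees $\s_e^-\in\dom\vphi_{e,s}$ so the $P$-action is well-defined. The only cosmetic difference is that in the paper lower-priority activity leaves $p_{e+1,s}$ literally unchanged rather than $\QQ$-extending it, which makes your persistence step even more immediate.
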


	At stage~$s$, a requirement~$R_e$ \emph{requires attention} if it is either \emph{waiting}, or if it is a $Q$-requirement which is \emph{ready}, and there is some condition $q = (F,\theta)\in \+E_{e}$ extending~$p_{e,s}$ which is discovered by stage~$s$ and such that $F\subset A_s$.

	\smallskip
	
	Here is the construction: we start with $n(0)=0$ and $p_{0,0}$ being the empty condition (so $A_0 = \w$). At stage~$s$, let~$R_e$ be the strongest requirement which requires attention at that stage. There is such a requirement since all but finitely many requirements are \emph{waiting} at stage~$s$. Note that $e\le m(s)$, so $p_{e,s}$ is defined. We set $m(s+1)= e+1$. For $k < e$ we let $p_{k,s+1} = p_{k,s}$, and the status of~$R_k$ at stage~$s+1$ is the same as its status at stage~$s$. We define $E_{e+1,s+1}$ and set~$R_e$'s new status as follows.

	\begin{orderedlist}

	 	\item Suppose that $R_e$ is a $P$-requirement. If $\s_e\in \dom \vphi_{e,s}$ then we set $p_{e+1,s+1} = p_{m(s),s}$. Otherwise, write $\s_e = \tau\conc{\gamma}$ with $\gamma \in \mathcal{L}$ (recall that~$\s_e$ is nonempty).  Choose some $y > \max E_{m(s),s}$ in~$X_\gamma\cap Y_{E_{e,s}}$. We set $E_{e+1,s+1} = E_{e,s}\cup \{y\}$. We define $\vphi_{e+1,s+1}$ extending $\vphi_{e,s}$ by setting $\vphi_{e+1,s+1}(\s) = \vphi_{e,s}(\tau)\cup \{y\}$. 
	 	We set~$R_e$ to be \emph{satisfied} at stage~$s+1$. 

	 	\item Suppose that~$R_e$ is a $Q$-requirement, and that~$R_e$ is \emph{waiting} at stage~$s$. Choose some $y> \max E_{m(s),s}$ with $y\in X_\gamma\cap Y_{E_{e,s}}$ for some $\gamma > \max \range \s_e$. We set $p_{e+1,s+1} = (E_{e,s}\cup \{y\}, \vphi_{e,s})$. We declare~$R_e$ to be \emph{ready} at stage~$s+1$. 

	 	\item Suppose that~$R_e$ is a $Q$-requirement, and that~$R_e$ is \emph{ready} at stage~$s$. Let $q = (F,\theta)$ witness that~$R_e$ requires attention at stage~$s$. By adding an element to~$F$, we may assume that $\max F > \max E_{m(s),s}$. Set $p_{e+1,s+1}= q$, and declare~$R_e$ to be \emph{satisfied} at stage~$s+1$. 
	 \end{orderedlist} 

	 Finally, for all $k>e$, we set~$R_k$ to be \emph{waiting}. This completes the description of the construction.   

	 \medskip
	 
	 In beginning the verification, we check that the construction can be performed as described. It is not difficult to verify, by induction on the stages, that~(a) and~(b) above hold at every stage. We also observe that whenever defined, it is indeed the case that $p_{e,s}\in \QQ$. For this, we mostly use \cref{clm:extension_in_P}, to check that $E_{e+1,s+1}\in \PP$ (where~$R_e$ receives attention at stage~$s$). In case that~$R_e$ is a $P$-requirement and $\s_e\notin \dom \vphi_{e,s}$, note that by our ordering of the requirements, and by~(b), $\tau = {\s_{e}}^-$ is indeed in~$\dom \vphi_{e,s}$. Also, $\vphi_{e,s}(\tau)\subseteq E_{e,s}$ and $y > \max E_{m(s),s} \ge \max E_{e,s}$ so $\vphi_{e+1,s+1}(\s_e)$ is of the required form. 

	 \smallskip
	 
	 Next, we observe that for all~$s$, $A_{s+1}\subseteq A_s$. Mainly, this is because $\max E_{m(s+1),s+1} \ge \max E_{m(s),s}$. Also, in~(i) and~(ii), if $E_{m(s+1),s+1}\ne E_{m(s),s}$, then we have $E_{m(s+1),s+1} = E_{e+1,s+1} = E_{e,s}\cup \{y\}$ where $y> \max E_{m(s),s}$ (so $y\in A_s$) and $E_{e,s}\subseteq E_{m(s),s} \subset A_s$. In~(iii) the instructions require $E_{e+1,s+1}\subset A_s$. 

	 \smallskip
	 
	 The construction is finite injury; every requirement is eventually either permanently \emph{satisfied}, or is permanently \emph{ready} but never later \emph{satisfied}. For every~$e$, the sequence $(p_{e,s})_{s\in \w}$ stabilizes, and we denote the stable value by $p_e = (E_e,\vphi_e)$. We have $p_{e+1}\le_\QQ p_e$. We let~$\GG$ be the filter generated by the sequence $(p_e)$, i.e., $\GG = \left\{ q\in \QQ \,:\,  (\exists e)\,\,q\ge_\QQ p_e \right\}$. We let $A = A[\GG]$ and $f = f[\GG]$ defined as in the previous construction. We have $A = \bigcap_s A_s$. 

	 \smallskip
	 
	 Since~$A$ is obtained from a filter of~$\PP$, for every $Z\subseteq A$ and every~$n$ such that $\Phi(Z,n)\converge$ we have $\Phi(Z,n) = A(n)$. 

	 \smallskip
	 
	 By the instruction, and the fact that every requirement is injured only finitely many times, we see that every $P$-requirement is met. Hence $\dom f = T_{\+L}$. This also implies that~$A$ is infinite: For every $\gamma\in \+L$, there are infinitely many sequences $\s\in T_{\+L}$ whose last element is~$\gamma$, and so in fact $A\cap X_\gamma$ is infinite. 

	 \smallskip
	 
	 If~$\+L$ is well-founded, then the argument from the proof of \cref{prop:completeness1} shows that $A\uiT A$ via~$\Phi$. Suppose that~$\+L$ is ill-founded. We define~$\SSS$ as in the previous proof, choose a sufficiently generic $(\beta_i)$ through~$\SSS$, and let $Z = Z[(\beta_i)] = \bigcup \left\{ f(\s) \,:\,  \s\prec (\beta_i) \right\}$. 

	 Let~$\Psi$ be a functional, let~$\s\in \SSS$ which forces that $\Psi(Z)$ is total. As in the previous proof, we find an extension~$\tau$ of~$\s$ in~$\SSS$ such that $\Psi(f(\tau))\perp A$.

	 \begin{claim}
	 	For every well-founded~$\beta\in \+L$, $\GG\cap \+E_{\s,\beta,\Psi}$ is nonempty. 
	 \end{claim}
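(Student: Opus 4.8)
The plan is to combine the standard finite‑injury bookkeeping with the density argument of \cref{clm:E_is_dense}, adapted to the priority setting.

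Write $R_e = Q_{\s,\beta,\Psi}$. A routine induction along the priority list shows that every requirement acts only finitely often (a $P$-requirement acts at most once after all stronger requirements have stopped acting, and a $Q$-requirement at most twice, passing from \emph{waiting} to \emph{ready} and then from \emph{ready} to \emph{satisfied}); hence there is a stage $s_0$ after which $R_e$ is never injured and $p_{k,s}=p_k$ for all $k\le e$ and $s\ge s_0$. If $R_e$ is already \emph{satisfied} by stage $s_0$, then its witnessing condition is $p_{e+1}\in\GG$ and we are done. Otherwise, at the first stage past $s_0$ at which $R_e$ requires attention (no stronger requirement does, at stages $>s_0$) it acts via case (ii), becoming \emph{ready} with $p_{e+1}=(E_e\cup\{y\},\vphi_e)$, where $r(y)=\gamma$ for some $\gamma>\max\range\s$. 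As $\gamma$ lies strictly $\+L$-above every rank occurring in $\s$, while every rank occurring in $Z$ is $\le\max\range\s$, the element $y$ cannot belong to the diagonalising set $Z$, so $y$ is an eligible diagonalisation witness.

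The heart of the matter is to show that $R_e$ is eventually \emph{satisfied} via case (iii); for then its witnessing condition $q=(F,\theta)$ equals $p_{e+1}$ at all subsequent stages, so $q\in\GG\cap\+E_{\s,\beta,\Psi}$, as required. Suppose not. Then from some stage on $R_e$ is permanently \emph{ready}, so $A$ permanently extends $(E_e\cup\{y\},\vphi_e)$; since all $P$-requirements are met, $\dom f = T_{\+L}$, each value $f(\rho)$ is eventually stable and contained in $A$, and $E_e = A\restriction\max E_e$. Now mimic \cref{clm:E_is_dense}: since $\s\in\SSS$ forces in $\SSS$ that $\Psi(Z)$ is total, there is a finite $\tau\succeq\s$ in $\SSS$ with $\Psi(f(\tau),y)\converge$; because $\tau\in\SSS$ all of its ranks are ill-founded and hence exceed the well-founded $\beta$, so $\min\range\tau>\beta$. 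Set $F=E_e\cup f(\tau)\cup\{y\}$ if $\Psi(f(\tau),y)=0$, and $F=E_e\cup f(\tau)$ if $\Psi(f(\tau),y)=1$; in both cases $\Psi(\theta(\tau))\perp F$ at input $y$, where $\theta$ extends $\vphi_e$ by $\theta(\rho)=f(\rho)$ for $\rho\preceq\tau$. The verifications in \cref{clm:E_is_dense} (using $f(\tau)\subseteq A$, $E_e = A\restriction\max E_e$, and that ranks strictly descend along $f(\tau)$) show $q=(F,\theta)\in\QQ$, $q\le_\QQ p_e$, and $q\in\+E_{\s,\beta,\Psi}$. Finally $F\subseteq A$, so $F\subseteq A_s$ for every $s$, and $q$ is discovered at some finite stage; at the first such stage past $s_0$ the requirement $R_e$ requires attention and, being the strongest such requirement, acts via case (iii) --- contradicting that it is never \emph{satisfied}.

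I expect the main obstacle to be this coordination step: the ``true'' $\tau$ is defined through $\SSS$-forcing, whose complexity is tied to the ill-founded part of $\+L$, so the construction cannot search for it directly; the point is rather that \emph{some} eligible witness --- the $q$ built above --- ends up realised inside $A$, so the blind search in the definition of ``requires attention'' necessarily succeeds. Everything else is routine. As in \cref{prop:completeness2}, this claim is then fed into an overspill argument: the statement that $Q_{\s,\beta,\Psi}$ is eventually satisfied is an arithmetic condition on $\beta$, so since the well-founded part of $\+L$ is not hyperarithmetic it must hold for some ill-founded $\beta$ as well, and from such a $\beta$ (together with $\min\range\tau>\beta$ forcing $\tau\in\SSS$) one recovers the required $\tau$ with $\Psi(f(\tau))\perp A$, completing the case where $\+L$ is ill-founded.
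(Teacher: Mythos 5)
Your argument tracks the paper's proof quite closely: finite-injury bookkeeping to obtain a stable $p_e$ and a stage $s_0$ after which $R_e$ is never injured, then a mimicry of the density argument from \cref{clm:E_is_dense} to show $R_e$ must eventually act via case (iii), using that $F\subseteq A\subseteq A_t$ guarantees the "requires attention" search finds a witness. The one genuine divergence is your treatment of the case $\Psi(f(\tau),y)=0$. The paper handles it more directly: choosing $k>e$ with $\tau\in\dom\vphi_k$, it observes $\Psi(\vphi_k(\tau),y)=0\ne E_k(y)=1$ (since $y\in E_{e+1}\subseteq E_k$), so $p_k$ is already in $\GG\cap\+E_{\s,\beta,\Psi}$, finishing that branch immediately. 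You instead construct $F=E_e\cup f(\tau)\cup\{y\}$ and feed the same machinery. That works, but citing "the verifications in \cref{clm:E_is_dense}" to conclude $F\in\PP$ is slightly hasty: that verification rests on the fact that no two elements of $f(\tau)$ form a $\Phi$-axiom because the ranks along $f(\tau)$ strictly descend, whereas adding $y$ (whose rank $\gamma$ exceeds every rank along $\tau$) introduces new candidate pairs $(w,y)$ with $w<y$ and $w\le_r y$. To close the gap one must also note that $y=\min(A\setminus E_e)$: at the stage $R_e$ becomes ready the construction sets $A_{s+1}\cap(\max E_e,y)=\emptyset$, so any such $w\in F$ in fact lies in $E_e$, and the usual computation $\Phi(F,n)=A(n)=E_e(n)=F(n)$ still goes through. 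With that observation supplied, your unified version is correct; the paper's split into two cases avoids needing it.

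One small tangential remark: your parenthetical "every rank occurring in $Z$ is $\le\max\range\s$" is not quite accurate as a claim about the diagonalizing set $Z$ (the generic descending sequence $(\beta_i)$ need not extend $\s$), but this is not used in the proof of the claim; what is used, and is correct, is that $y$'s rank exceeds every rank along $\tau$, hence $y\notin f(\tau)$.
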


	 \begin{proof}
	 	Fix such~$\beta$, and let~$e$ be such that $R_e = Q_{\s,\beta,\Psi}$. Suppose, for a contradiction, that $\GG\cap \+E_{e} = \emptyset$. 

	 	Let~$s$ be the last stage at which~$R_e$ is \emph{waiting} and receives attention. If~$R_e$ receives attention after stage~$s$ then it chooses $p_{e+1,s+1}\in \+E_e$; since it is not later initialised (set back to a \emph{waiting} state by some stronger~$R_k$), we have $p_{e+1} = p_{e+1,s+1}$. Hence, our assumption for contradiction implies that~$R_e$ does not receive attention after stage~$s$. We will show that~$R_e$ requires attention after stage~$s$; this will contradict the fact that~$R_e$ is not initialised after stage~$s$. 

	 	For all $t\ge s$ we have $p_{e,t} = p_e$ (so $E_{e,t} = E_e \prec A$). At stage~$s$ we define $E_{e+1,s+1} = E_{e,s} \cup \{y\}$ for some~$y$. Since $\s\force_{\SSS} ``\Psi(Z)\text{ is total}"$, there is some $\tau\in \SSS$ extending~$\s$ such that $\Psi(f(\tau),y)\converge$. As in the previous proof, note that~$y$ was chosen so that $y\notin f(\tau)$ as it is $>_r x$ for all $x\in f(\tau)$. 

	 	Since~$R_e$ does not receive attention after stage~$s$, for all $t>s$ we have $p_{e+1,t} = p_{e+1}$, so $y\in A$. Let $k>e$ be sufficiently large so that $\tau\in \dom \vphi_k$. If $\Psi(f(\tau),y) = 0$ then $p_k\in \+E_e$; hence, by our assumption for contradiction, we have $\Psi(f(\tau),y) = A(y)=1$. 

	 	We again let $F = E_e \cup f(\tau)$. Then $F\subset A$, so $F\subset A_t$ for all~$t$. Again define~$\theta$ by extending $\vphi_{e}$ by setting $\theta(\rho) = f(\rho)$ for all $\rho\preceq \tau$. Then the argument proving \cref{clm:E_is_dense} shows that $(F,\theta)\in \QQ$, extends~$p_e$, and is an element of~$\+E_e$. At a large enough stage $t>s$ we discover this condition, and as $F\subset A_t$, we see that~$R_e$ requires attention at stage~$t$, which leads to the desired contradiction.
	 \end{proof}
	 
	 We end the proof as above: since the well-founded part of~$\+L$ is not hyperarithmetic, $\GG$ intersects $\+E_\beta$ for some ill-founded~$\beta$, which gives the desired extension of~$\s$ in~$\SSS$; the genericity of $(\beta_i)$ yields $\Psi(Z)\ne A$. This completes the proof of \cref{prop:main_completeness_proposition}, and so of~\cref{thm:main_final_completeness_theorem}. 
\end{proof}


\section{Obtaining introreducible subsets} 
\label{sec:improving_enumerability}

In this section we find introreducible subsets of given introenumerable sets. We will start with a proof of \cref{thm:improving_enumerability_one_sided}\ref{item:noncollapsing}, and then elaborate on its technique.

\subsection{When all ordinals are computable} 
\label{sub:when_all_ordinals_are_computable}

Suppose that $A \uie B$, via some enumeration functional~$\Phi$. How can we tell if a given~$n$ is in~$A$? Consider the tree of finite subsets~$E$ of~$B$ which do not enumerate~$n$ using~$\Phi$, ordered by end-extension. If $n\in A$ then this subtree must be well-founded, and so every subset has an ordinal rank. Otherwise, it is the full infinitely-splitting tree of all finite subsets of~$B$. In the next proposition, we show that if the ordinal ranks are all computable, then with the aid of a sufficiently fast-growing function and a subset of~$B$, we can use this to compute whether~$n$ is in~$A$ or not.

\begin{proposition} \label{prop:noncollapsing}
	Suppose that $A \uie B$ and $\w_1^B = \wock$. Then there is some $C\in [B]^\w$ which is $\Delta^1_1(B)$ and such that $A \uiT C$. 
\end{proposition}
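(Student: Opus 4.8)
The plan is to use the machinery already set up in Lemma~\ref{lem:3.9}: it suffices to produce some $C \in [B]^\w$ with $C \in \Delta^1_1(B)$ and $A \uiT C$, because then by that lemma we obtain a $\Delta^1_1(B)$ set $C' \in [C]^\w \subseteq [B]^\w$ with $C' \uiT C'$; but actually here we want the conclusion $A \uiT C$ itself, so I only need to find such a $C$. Fix an enumeration functional $\Phi$ witnessing $A \uie B$ (by \cref{prop:uie_and_uiS} this exists). For each $n$, consider the tree $T_n$ of all finite $E \in [B]^{<\w}$, ordered by end-extension, such that $n \notin \Phi(E)$. If $n \in A$ then $T_n$ is well-founded (every cofinal branch would give an infinite subset $S \subseteq B$ with $n \notin \Phi(S) = A$, contradiction), and since $T_n \le_\Tur B$ and $\w_1^B = \wock$, the ordinal rank $\mathrm{rk}(T_n)$ is a $B$-computable ordinal, hence below some fixed $\alpha < \wock$ — wait, the ranks need not be bounded uniformly, so I instead work with the function $h$ below.

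The key step: define $h \colon \w \to \w$ by letting $h(n)$ be large enough to ``resolve'' $T_n$ along the principal function of $B$. More precisely, the idea is that if $n \in A$, because $T_n$ is well-founded there is a bound on how long an end-extension chain of finite subsets of $B$ avoiding the enumeration of $n$ can be; but chains can be long, so instead I use the standard trick: thin $B$ to a subset whose principal function dominates a function $f \equiv_\Tur B'$ (or more) that is a modulus for the relevant $\Pi^1_1(B)$ information. Concretely, since $A$ is c.e.\ in $B$, $A$ is $\Delta^1_1(B)$; better, the statement ``$n \in A$'' is equivalent to ``$T_n$ is well-founded'', and since $\w_1^B = \wock$ the set $\{(n,m) : \mathrm{rk}(T_n) \le m\}$-type information is $\Pi^1_1(B)$ with all witnessing ordinals $B$-computable, so there is a function $f \equiv_\Tur \+O^B$, or really $f \in \Delta^1_1(B)$, and a Turing functional $\Gamma$ such that for every $g \ge f$, $\Gamma(B, g) = A$: run through finite $E \subseteq B$ of ``size/max bounded by $g(n)$'' and check whether all of them, when extended within $B \rest_{g(n)}$, are forced to enumerate $n$. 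The cleanest route: $A^{(\text{something})}$ — rather, use that $A$ has a $B$-computable modulus via the well-foundedness ranks, exactly as in the proof of \cref{lem:3.9} but now with $\Phi$ replacing the jump. Then take $C \subseteq B$ to be the subset whose principal function is $p_B \circ f$; then $C \in \Delta^1_1(B)$, $C \le_\Tur \+O^B$, and any infinite $D \subseteq C$ has principal function majorizing $p_B \circ f \ge f$ and is a subset of $B$, so $D$ uniformly enumerates $A$ via $\Phi$ and uniformly majorizes $f$, hence $D$ uniformly computes $A$ via $\Gamma$: given $n$, compute $p_D(n+1) > p_{p_B \circ f}(n)$... — more carefully, the $n$th element of $D$ is $\ge (p_B \circ f)(n) \ge f(n)$ (using $p_B(k) \ge k$), so $D$'s principal function dominates $f$, and we feed this to $\Gamma$ together with $\Phi$-enumeration of $A$ from $D$.

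The main obstacle is making precise the claim that the well-foundedness ranks being $B$-computable yields a genuine $B$-computable (indeed $\Delta^1_1(B)$) \emph{modulus} function $f$ together with a functional $\Gamma$ with $\Gamma(B,g) = A$ for all $g \ge f$ — i.e., converting ``all ranks are small'' into domination data. The point is: for $n \in A$, let $f(n)$ be least such that every $E \in [B \cap f(n)]^{<\w}$ with $n \notin \Phi(E)$ has the property that no extension $E' \supseteq E$ with $E' \in [B \cap f(n)]^{<\w}$ strictly increases... this doesn't quite terminate; the correct formulation uses that $T_n$ restricted to subsets of an initial segment of $B$ is finite, and well-foundedness of $T_n$ means: there exists $m$ such that for every infinite $S \subseteq B$, some finite initial segment of $S$ contained in $\{0,\dots,m\}$ already enumerates $n$ — and since $T_n$ is finitely branching once we bound the max element, König's lemma gives that well-foundedness of $T_n$ is equivalent to $[T_n] = \emptyset$, a $\Sigma^1_1(B)$, indeed $\Pi^0_1(B)$-in-the-right-sense statement resolved by a sufficiently large bound. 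I expect to phrase $f$ as: $f(n) = $ least $m$ such that $T_n$ has no element all of whose ``one-step $B$-extensions below $m$'' are again in $T_n$ and reach depth... — in short, $f \le_\Tur \+O^B$ and is a modulus for $A$, and then the argument is identical to \cref{lem:3.9}. The hard part is really just this bookkeeping; everything downstream (forming $C$, checking $C \in \Delta^1_1(B)$, checking uniformity) is routine and parallels \cref{lem:3.9} verbatim.
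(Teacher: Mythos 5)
Your high-level setup is on the right track: you correctly fix the enumeration functional $\Phi$, consider the trees $T_n$ of finite subsets of $B$ not yet enumerating $n$, use $\w_1^B = \wock$ to conclude the ranks are computable ordinals, and plan to thin $B$ to a sparse $\Delta^1_1(B)$ subset $C$ whose sparseness lets any $D\in[C]^\w$ decode $A$. This is also what the paper does. But the step you flag as ``really just bookkeeping'' is in fact the heart of the argument, and your proposed mechanism for it does not work.

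Two concrete problems. First, the functional you describe has the form $\Gamma(B,g)=A$ for $g\ge f$; but an infinite subset $D\subseteq C$ does not compute $B$, so $D$ cannot supply the oracle $\Gamma$ needs. Saying you will instead ``feed $\Gamma$ the $\Phi$-enumeration of $A$ from $D$'' changes the oracle, and there is no reason the same $f$ is a modulus for that enumeration: the settling time of the $D$-enumeration depends on $D$.

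Second, and more fundamentally, a single modulus function $f\colon\w\to\w$ cannot control when the enumeration from $D$ settles, because the relevant ``depth'' is not a natural number but an ordinal. With $\erank{n}{\emptyset}\le\delta$ for some fixed $\delta<\wock$, each additional element appended to a finite subset strictly decreases the rank, but when $\delta$ is transfinite the number of steps needed depends on which elements of $D$ you take, and there is no a priori finite bound $f(n)$ that works for all sufficiently sparse $D$. (Your attempted formulation ``$f(n)=$ least $m$ such that $T_n$ has no element all of whose one-step $B$-extensions below $m$\dots'' is trying to extract a finite bound from a transfinite well-foundedness rank, which is exactly where this breaks.) The paper's solution is the missing ingredient: fix a notation-like computable presentation of $\delta+1$, with uniformly computable cofinal sequences $\gamma[0]<\gamma[1]<\cdots$ through each nonzero $\gamma\le\delta$, and define a \emph{deficiency function} $f(\gamma,n,E)$ with the property that $\erank{n}{E}<\gamma$ implies $\erank{n}{E}\le\gamma[f(\gamma,n,E)]$. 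If $p_C$ dominates a ($\Delta^1_1(B)$) deficiency function, then from any $D\in[C]^\w$ one can compute a strictly decreasing sequence of ordinal bounds $\delta=\beta_0>\beta_1>\cdots$ via $\beta_{i+1}=\beta_i[p_D(\beta_i,n,D_{i+1})]$, which terminates by well-foundedness at some $\beta_j=0$, at which point $n\in A\iff n\in\Phi(D_j)$. This iterated descent through the ordinal notation is precisely what your sketch lacks, and without it the argument does not close.
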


Using this proposition we can prove Theorem \ref{thm:improving_enumerability_one_sided}, which weakens the hypothesis $A \uie B$ to the non-uniform $A \ie B$: if $A \ie B$ and $\omega_1^B = \wock$, then there is an infinite $C \subseteq B$ such that $A \uiT C$.

\begin{proof}[Proof of \cref{thm:improving_enumerability_one_sided}\ref{item:noncollapsing}, given \cref{prop:noncollapsing}]
	Suppose that $A \iS B$ and $\w_1^B = \wock$. By \cref{cor:Gandy_basis}, find some $D\in [B]^\w$ with $\w_1^D \le \w_1^B$ (so $\w_1^D = \wock$) and $A \uiS D$. By \cref{prop:uie_and_uiS}, $A \uie D$. Now apply \cref{prop:noncollapsing} to~$A$ and~$D$. 
\end{proof}

This proposition also implies that if~$A$ is uniformly introenumerable and $\w_1^A = \wock$, then $A$ has a uniformly introcomputable subset. For if~$A$ is uniformly introenumerable, then by~\cref{prop:uie_and_uiS}, $A\uie A$; apply \cref{prop:noncollapsing} to~$A$ and~$A$ to get some $C\in [A]^\w$ such that $A \uiT C$ and $C\in \Delta^1_1(A)$; now apply~\cref{lem:3.9}. To get \cref{thm:improving_enumerability}, we will follow this argument, but will need to replace the assumption $\w_1^B = \wock$ in \cref{prop:noncollapsing} with the assumption that~$A$ is introenumerable, which will take some work.

\begin{proof}[Proof of \cref{prop:noncollapsing}]
	Suppose that $A \uie B$ via an enumeration functional~$\Phi$. By a standard time-trick, we assume that the map $E\mapsto \Phi(E)$ (for finite sets~$E$) is computable (if at a late stage~$s$ we see that $n\in \Phi(E)$, we instead enumerate~$n$ with use all the extensions of~$E$ which have elements greater than~$s$). 

	\smallskip

	We introduce notation for the rank of a finite subset of~$E$ that will be modified later in the paper. For $n\in \w$, $E\in [B]^{<\w}$ and ordinal~$\alpha$ we define the relation $\erank{n}{E} \ge \alpha$ by recursion on~$\alpha$. For all $n$ and~$E$ we have $\erank{n}{E} \ge 0$. Suppose that $\alpha\ge 1$. 
	\begin{itemize}
		\item $\erank{n}{E} \ge 1$ if $n\notin \Phi(E)$. 
		\item For $\alpha>1$, $\erank{n}{E}\ge \alpha$ if for all~$\beta<\alpha$ there is some $y\in B$, $y>\max E$ such that $\erank{n}{E\cup \{y\}}\ge \beta$. 
	\end{itemize}
	We then let $\erank{n}{E} = \alpha$ if $\erank{n}{E} \ge \alpha$ but $\erank{n}{E}\nge \alpha+1$; if $\erank{n}{E} \ge \alpha$ for every ordinal $\alpha$, then we write $\erank{n}{E} = \infty$. 

	To unpack: $\erank{n}{E} = 0$ if and only if $n\in \Phi(E)$; $\erank{n}{E} = \beta+1$ if and only if for every $y\in B$ greater than~$\max{E}$ we have $\erank{n}{E\cup\{y\}} \le \beta$ but for some such~$y$ we have $\erank{n}{E\cup\{y\}} = \beta$; for a limit ordinal~$\gamma$, $\erank{n}{E} = \gamma$ if and only if for all $y\in B$, $y>\max E$ we have $\erank{n}{E\cup\{y\}}<\gamma$ but $\{\erank{n}{E\cup\{y\}}\,:\, y\in B, y>\max E\}$ is unbounded in~$\alpha$.

	\smallskip

	We presented this inductive definition, because it is the one that will be modified later; however we can also formalise this as a tree rank. For each $n$ (thought of as either an element of $A$ or its compliment), let~$T_n$ be a subtree of the tree of finite subsets $E$ of~$B$, ordered by end-extension, with the leaves of~$T_n$ being the minimal sets~$E$ (again under end-extension) satisfying $n\in \Phi(E)$. If $E\in T_n$ (in particular, if $n\notin \Phi(E)$ or $E$ is a leaf of $T_n$) then~$\erank{n}{E}$ is the rank of~$E$ on the tree~$T_n$. Using the fact that $\Phi$ witnesses that $A \uie B$, it follows that~$T_n$ is well-founded if and only if $n\in A$. For completeness, we give an argument using the inductive definition.

	In fact, we observe that for all~$n$, $n\in A$ if and only if $\erank{n}{\emptyset}<\infty$ if and only if for all $E\in [B]^{<\w}$, $\erank{n}{E}< \infty$. If~$n\notin A$ then by induction on~$\alpha$ we show that for all $E\in [B]^{<\w}$, $\erank{n}{E}\ge \alpha$; the main step is showing this for $\alpha=1$, which follows from $n\notin \Phi(B)$. In the other direction, suppose that $\erank{n}{E}=\infty$ for some $E\in [B]^{<\w}$; then there is some $y>\max E$ in~$B$ such that $\erank{n}{E\cup \{y\}} = \infty$, as $B\setminus E$ is a set and not a proper class. Repeating, we build an infinite set~$Z$ such that $n\notin \Phi(Z)$ (as $\erank{n}{F} = \infty >0$ for all $F\prec Z$) which shows that $n\notin A$. 

	\medskip
	
	Since $A$ is c.e.\ in~$B$, we have $A\in \Delta^1_1(B)$, in particular $\w_1^{A\oplus B} = \w_1^B = \wock$. For every $n\in A$, $\erank{n}{\emptyset} < \wock$ as it is the tree rank of the tree~$T_n$, which is $B$-computable. Moreover, the set $\left\{ \erank{n}{\emptyset} \,:\, n\in A  \right\}$ is bounded below~$\wock$. To see this, let $T_A$ be the tree which is the disjoint sum of the trees~$T_n$ for $n\in A$ (add a root below the roots of all of these trees); the tree~$T_A$ is $B'$-computable, and so its rank is $B$-computable. We fix some computable ordinal~$\delta$ such that for all $n\in A$, $\erank{n}{\emptyset}\le  \delta$. 

	Further, we fix some computable presentation of~$\delta+1$ which is \emph{notation-like}, and identify every ordinal $\beta \le \delta$ with the natural number coding this ordinal in our computable presentation. Notation-like means: 
	\begin{orderedlist*}
		\item The set of limit $\gamma \le \delta$ is computable; 
		\item The successor function on~$\delta$ is computable. 
	\end{orderedlist*}
	This implies that the function taking a successor ordinal $\beta\le \delta$ to its predecessor is also computable, and that for every limit $\gamma\le \delta$ we can (uniformly) compute a cofinal $\w$-sequence $\gamma[0]< \gamma[1] < \cdots$ in~$\gamma$. For successor ordinals $\gamma \le \delta$ let $\gamma[k] = \gamma-1$ for all~$k$. 

	\smallskip
	
	We observe that after fixing the presentation of~$\delta$, the function~$\lambda$ is $\Delta^1_1(B)$; this is because this is the ranking function for the tree~$T_A$. 
	
	\medskip
	
	We say that a function $f\colon \w\to \w$ is a \emph{deficiency} function if:  
	\begin{itemize}
		\item For every nonzero $\gamma\le \delta$, $n\in A$ and $E\in [B]^{<\w}$,
		 \[
		 \erank{n}{E}<\gamma \Then \erank{n}{E} \le \gamma[f(\gamma,n,E)]. 
		 \]
	\end{itemize}
	Since~$\lambda$ is $\Delta^1_1(B)$, there is a $\Delta^1_1(B)$ deficiency function. Further (and this is the main point), any function majorising a deficiency function is itself a deficiency function. Now we choose some $C\in [B]^\w$ such that $p_C$ (the principal function of~$C$) is a deficiency function; since there is a $\Delta^1_1(B)$ deficiency function, we can choose~$C$ to be $\Delta^1_1(B)$. We show that $A \uiT C$. Indeed, we show that if $D\in [B]^\w$ and~$p_D$ is a deficiency function, then $A\le_\Tur D$ uniformly; this holds for every $D\in [C]^\w$. 

	\smallskip
	
	Fix such~$D$. Given $n\in \w$, by recursion on $i=0,1,\dots$ we compute a decreasing sequence of ordinals $\beta_0 > \beta_1 > \cdots$ as follows:
	\begin{itemize}
		\item $\beta_0 = \delta$; 
		\item if $\beta_i>0$ then $\beta_{i+1} = \beta_i[p_D(\beta_i,n,D_{i+1})]$, where~$D_i$ consists of the first~$i$ elements of~$D$. 
	\end{itemize}
	This sequence must of course halt at some~$j$ such that $\beta_j = 0$. We claim that $n\in A \Iff n\in \Phi(D_j)$, which shows how to compute~$A$ from~$D$ (recall that we assumed that the relation $n\in \Phi(E)$ is computable rather than merely c.e.). 

	Certainly, if $n\notin A$ then $n\notin \Phi(D)$ so $n\notin \Phi(D_j)$. Suppose, then, that $n\in A$. We want to show that $n\in \Phi(D_j)$, equivalently, that $\erank{n}{D_j} = 0$. By induction on $i\le j$ we show that $\erank{n}{D_i}\le \beta_i$. 

	For $i=0$, by choice of~$\delta$, we have $\erank{n}{D_0} = \erank{n}{\emptyset} \le \delta = \beta_0$. 

	For the induction step, suppose that $i<j$ and that $\erank{n}{D_i}\le \beta_i$. We note that $D_{i+1}$ is a proper extension of~$D_i$, and so $\erank{n}{D_{i+1}} < \erank{n}{D_i}$, so $\erank{n}{D_{i+1}} < \beta_i$. Now since~$p_D$ is a deficiency function, $\erank{n}{D_{i+1}} < \beta_i$ implies $\erank{n}{D_{i+1}} \le \beta_i[p_D(\beta_i,n,D_{i+1})] = \beta_{i+1}$. 
	This ends the proof. 
\end{proof}

\subsection{The general case} 
\label{sub:the_general_case}

As we discussed, we want to prove \cref{prop:noncollapsing}, but remove the assumption that $\w_1^B = \wock$. By \cref{thm:failure_of_improving_enumerability}\ref{item:failure_right}, this cannot always be done. It is instructive to think about what part of the proof of \cref{prop:noncollapsing} fails without the assumption. What was special about the ordinal~$\delta$ was that it is \emph{computable}, rather than merely $B$-computable. This was implicitly used in the computation process we described using a sufficiently sparse set $D\in [B]^\w$: it has access to our computable copy of~$\delta$. Since there is no reason to assume that~$D$ computes~$B$, we may have $\w_1^D < \w_1^B$; but even if $\w_1^D \ge \w_1^B$ and $\delta$ is $D$-computable, different subsets~$D$ of~$B$ may not be able to agree on a single copy of~$\delta$. The notion of deficiency function was highly dependent on the choice of cofinal sequences $(\beta[k])$, and so of our copy of~$\delta$. 	

Instead of the impossible, we will prove:

\begin{proposition} \label{prop:improving:main}
	Suppose that $A \uie B$. Then there are $X\in [A]^\w$ and $Y\in [B]^\w$ such that $X \uiT Y$. Further, we can choose $X,Y\in \Delta^1_1(B)$. 
\end{proposition}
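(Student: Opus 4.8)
The plan is to push the rank-and-deficiency machinery of \cref{prop:noncollapsing} as far as it will go, and to repair its single point of failure---that an infinite subset of~$B$ need not compute~$B$, hence need not have access to a presentation of the ordinal bound~$\delta$ (indeed by \cref{thm:failure_of_improving_enumerability}\ref{item:failure_right} we cannot hope to compute all of~$A$)---by weakening the conclusion from ``compute~$A$'' to ``compute a carefully chosen infinite $X\subseteq A$'', and by laying the ordinal data down \emph{inside} the set~$Y$ during a $\Delta^1_1(B)$ construction rather than assuming it is available to the oracle. So, first I would fix an enumeration functional~$\Phi$ witnessing $A\uie B$, time-tricked so that $E\mapsto\Phi(E)$ is computable, and define the rank $\erank{n}{E}$ over finite $E\subseteq B$ exactly as in \cref{prop:noncollapsing}, recording that $\lambda\in\Delta^1_1(B)$, that $\delta:=\sup\{\erank{n}{\emptyset}:n\in A\}$ is a $B$-computable ordinal, and that a $B$-computable notation-like presentation of $\delta+1$, with its successor and fundamental-sequence apparatus $\gamma\mapsto\gamma[k]$, is available to the construction (though not to the eventual oracle~$D$).

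Next I would run a recursion, carried out below some $B$-computable jump $B^{(\alpha)}$ (so that the resulting sets land in $\Delta^1_1(B)$), building $Y=\{y_0<y_1<\cdots\}\subseteq B$ and $X=\{x_0<x_1<\cdots\}\subseteq A$ together. Having committed an initial block of~$Y$ by stage~$k$, I would use the $B$-computable rank data to select the next element $x_k\in A$ to enumerate into~$X$ together with a \emph{certificate}: a finite $\subseteq$-increasing chain of finite subsets of~$B$, $F_0\subsetneq F_1\subsetneq\cdots\subsetneq F_\ell\ni x_k$ starting inside the current block, matched with a strictly descending run of notations $\delta\ge\gamma_0>\gamma_1>\cdots>\gamma_\ell=0$, so that at each step the deficiency inequality $\erank{x_k}{F_i}<\gamma_i$ forces $\erank{x_k}{F_{i+1}}\le\gamma_i[k_i]=\gamma_{i+1}$ for the recorded index~$k_i$ (using that a proper finite extension strictly lowers the rank), whence $\erank{x_k}{F_\ell}=0$, i.e.\ $x_k\in\Phi(F_\ell)\subseteq A$. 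I would then fold $F_\ell$, together with enough padding, into the next block of~$Y$; the sparseness of~$Y$ is chosen so that the principal function of \emph{every} infinite $D\subseteq Y$ dominates a deficiency function, and the padding is arranged so that the entire certificate package for each~$x_k$ reappears cofinally often along~$Y$ and in a form---``any sufficiently large finite subset of the padded region does the job''---that is not destroyed by passing to an arbitrary infinite subset of~$Y$. Checking that~$X$ and~$Y$ come out infinite and $\Delta^1_1(B)$ is then routine bookkeeping.

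The witnessing functional~$\Theta$ is the obvious one: on an infinite oracle $D\subseteq Y$, to produce the $k\tth$ element of~$X$, search~$D$ for certificate packages of the form above, verifying validity purely computably (the notation apparatus is carried along inside the package and $E\mapsto\Phi(E)$ is computable), and output the $k\tth$ distinct certified value, in increasing order. Soundness: a valid package for~$x$ forces, by the deficiency argument of \cref{prop:noncollapsing} threaded through the recorded chain, that $x\in A$, and by construction exactly the elements of~$X$ carry such packages; completeness: the robustness padding guarantees that every~$x_k$'s package occurs inside every infinite $D\subseteq Y$. Hence $\Theta(D)=X$ for all $D\in[Y]^\w$, that is $X\uiT Y$, and the $\Delta^1_1(B)$ choices made throughout give $X,Y\in\Delta^1_1(B)$; one can also arrange $Y\le_\Tur X$ during the recursion, so that \cref{lem:3.9} may later be applied in deriving \cref{thm:improving_enumerability}.

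I expect the main obstacle to be exactly the robustness-under-subsets requirement on the certificate packages: the same finite configuration must simultaneously be recognizable by a fixed computable check, genuinely certify membership---which is where the deficiency inequality has to be threaded correctly through a finite descending chain rather than invoked once---and survive restriction to \emph{every} infinite $D\subseteq Y$, not merely to~$Y$ itself. Reconciling the last two points---a subset~$D$ may drop most of a block, so ``the'' witnessing finite set cannot be a single prescribed set but must be any large enough finite subset of a deliberately fattened region, while~$p_D$ is still kept above the relevant deficiency function---is the delicate part, and it is precisely here that the construction goes beyond \cref{prop:noncollapsing}, replacing its assumed computable notation system for~$\delta$ by one supplied internally along~$Y$.
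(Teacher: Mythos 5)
Your diagnosis of the obstruction is correct (the ordinal bound $\delta$ is only $B$-computable, and an infinite subset of $Y$ need not compute $B$), as is the decision to settle for an infinite $X\subseteq A$ rather than all of $A$. But the mechanism you propose for circumventing the obstruction---laying the notation system down inside $Y$ as ``certificate packages''---cannot work, and it is exactly at the point you flag as delicate that a genuinely different idea is required. The functional $\Theta$ witnessing $X\uiT Y$ is a single Turing functional that must succeed on every $D\in [Y]^\w$, and by \cref{prop:uie_and_uiS} it can effectively use only positive information, i.e., finite subsets of $D$. Such a functional receives data through only two channels: the numerical values of the elements it sees, and their largeness. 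You cannot code the ($B$-computable but not computable) notation apparatus into the values, because $Y$ must be a subset of the given $B$ and you do not get to choose which numbers $B$ contains; and you cannot code it into positions or gap patterns, because an arbitrary $D\subseteq Y$ scrambles both, leaving only the domination $p_D\ge p_Y$. Likewise no specific finite package $F$ lies inside every $D$ (take $D=Y\setminus F$), and the fallback ``any sufficiently large finite subset of a padded region'' fails because $D$ may meet each region in a single point. So the oracle-side computation is forced back to the shape of the procedure in \cref{prop:noncollapsing}---descend through ranks using the first $i$ elements of $D$ itself---and then it again needs to know how far each step descends, i.e., it needs the notation system, which is precisely what is unavailable. (Note also that soundness of your certificates is trivial and needs no ordinal chain: any finite $F\subseteq B$ with $x\in\Phi(F)$ already gives $x\in\Phi(B)=A$ by monotonicity of enumeration operators; the entire difficulty is completeness, where your scheme breaks.)

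The paper's proof replaces the absolute descent through a fixed copy of $\delta$ by a \emph{relative} one: given $n$, the procedure first uses $A\uie Y$ to produce a reference element $m=f_Z(n)$ known to lie in $A$, and then runs the rank descents for $n$ and for $m$ against each other in an alternating race, halting when $m\in\Phi(F_j)$ (which must happen since $m\in A$) and answering according to whether $n\in\Phi(E_j)$. Correctness requires only that rank \emph{comparisons}, not absolute rank values, be preserved under one-step extension by large elements, so no notation system is ever needed on the oracle side. Making this work requires two further ingredients absent from your outline: a modified rank $\frk{Z}{n}{E}$ that ignores finite differences, and a transfinite winnowing construction of a rank-minimal $R\in[B]^\w$ on which the comparisons are stable under passing to subsets and on which comparative deficiency functions exist. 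The set $X$ is then $\left\{ n\in A \,:\, \srk{n}{\emptyset}\le\srk{n^*}{\emptyset} \right\}$ for a suitable $n^*\in A$ (or $X=A$ in the favorable case), which is where the weakening to a subset of $A$ actually comes from; in your outline the choice of $X$ plays no essential role.
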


Let us explain why this suffices. The proposition implies:

\begin{proposition} \label{prop:1.6_strengthened}
	Suppose that~$A$ is introenumerable and that $A \uie B$ . Then there is some $C\in [B]^\w$ such that $C\in \Delta^1_1(B)$ and $A\uiT C$. 
\end{proposition}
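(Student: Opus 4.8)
The plan is to deduce this from Proposition \ref{prop:improving:main} together with the transitivity/uniformization machinery already developed. Apply Proposition \ref{prop:improving:main} to $A$ and $B$: we obtain $X \in [A]^\w$ and $Y \in [B]^\w$ with $X \uiT Y$ and $X, Y \in \Delta^1_1(B)$. The key point is that, since $A$ is introenumerable, $X \subseteq A$ means $A$ is c.e.\ relative to $X$ (via $A \iS A$), and in fact we want to leverage that $A$ itself can be enumerated from the small set we build. First I would observe that because $A \iS A$ and $X \in [A]^\w$, we have $A \le_e X$ (or $A$ is c.e.\ in $X$); combined with $X \uiT Y$ we would like to conclude $A \uiT Y$ (up to passing to a further subset).

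More carefully: the composition we need is ``$A$ is c.e.\ in $X$'' with ``$X$ is uniformly introcomputable from $Y$''. From $X \uiT Y$ we get $X \le_\Tur Z$ uniformly for every $Z \in [Y]^\w$, hence (since $A$ is $X$-c.e., so $A \le_e X \oplus \overline{X}$) $A$ is uniformly c.e.\ in every $Z \in [Y]^\w$, i.e.\ $A \uiS Y$. By Proposition \ref{prop:uie_and_uiS}, $A \uie Y$. Now we are in the hypothesis of Proposition \ref{prop:noncollapsing}-style reasoning, but without the non-collapsing assumption — so instead I would iterate Proposition \ref{prop:improving:main} once more, this time applied to $A$ and $Y$: this gives $X' \in [A]^\w$, $Y' \in [Y]^\w \subseteq [B]^\w$ with $X' \uiT Y'$ and $X', Y' \in \Delta^1_1(B)$ (noting $Y \in \Delta^1_1(B)$ so $\Delta^1_1(Y) \subseteq \Delta^1_1(B)$). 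The subtlety is that a single application replaces $A$ by a subset $X'$, which is not what we want — we want to compute $A$ itself, uniformly, from subsets of some $C \subseteq B$.

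The cleaner route, which I expect is the intended one, is: apply Proposition \ref{prop:improving:main} to get $X \uiT Y$ with $X \in [A]^\w$, $Y \in [B]^\w$, both $\Delta^1_1(B)$. Since $A$ is introenumerable and $X \subseteq A$ is infinite, $A \iS X$, hence (Theorem \ref{thm:transitivity_for_iT_and_ie}, or rather $A \iS X \uiT Y$ gives $A \iS Y$ by \cref{thm:transitivity_for_ie}-type reasoning, as $A \oplus \overline{A}$ is c.e.\ in $X$ which is Turing-below every infinite subset of $Y$). So $A \iS Y$, and moreover uniformly: since $X \uiT Y$ is uniform and $A$ is c.e.\ in $X$ via a fixed reduction (from $A \iS X$, pick the operator forced by some condition — but uniformity across all subsets of $X$ requires care). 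To get genuine uniformity I would instead invoke \cref{thm:a_lot_of_uniformisation}: from $A \iS Y$ we get a nonempty $\Sigma^1_1(Y)$ (hence $\Sigma^1_1(B)$) class of $C \in [Y]^\w$ with $A \uiS C$; applying the Gandy basis theorem as in \cref{cor:Gandy_basis} picks such a $C$ with $\omega_1^C \le \omega_1^Y \le \omega_1^B$. But this still need not give $\omega_1^C = \wock$.

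So the genuinely correct argument must be: apply Proposition \ref{prop:improving:main} to $A$ and $B$, obtaining $X \uiT Y$ with $X \in [A]^\w$, $Y \in [B]^\w$, $X, Y \in \Delta^1_1(B)$. Now $A$ is introenumerable and $X \subseteq A$, so $A$ is c.e.\ in $X$; since $X$ is uniformly introcomputable from $Y$, $A$ is uniformly c.e.\ — more precisely, $A \oplus \overline A \le_\Tur X$-computations are uniform over $[Y]^\w$ — in every infinite subset of $Y$, giving $A \uie Y$. The hard part, and what I expect to be the main obstacle, is then arguing that one more round suffices to land a $\Delta^1_1(B)$ set: apply Proposition \ref{prop:improving:main} a second time to $A$ and $Y$ to get $X'' \uiT Y''$; but we need $A$, not $X''$. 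The resolution is that Proposition \ref{prop:improving:main}'s proof (using ranks of the enumeration tree) should, under the extra hypothesis that $A$ is introenumerable, actually yield $A$ itself on the left — in other words, \emph{this very proposition} is presumably proved not as a corollary of \ref{prop:improving:main} but by the same rank argument carried out with the introenumerability of $A$ substituted for the non-collapsing hypothesis, so that the bound $\delta$ on $\{\lambda(n;\emptyset) : n \in A\}$ is controlled not because it is $\wock$ but because $A$'s own subsets can recompute the ranking data. Thus my plan reduces to: \textbf{(1)} reduce to $A \uie Y$ with $Y \in \Delta^1_1(B)$ via one application of \ref{prop:improving:main} plus introenumerability; \textbf{(2)} rerun the rank argument of \ref{prop:noncollapsing}/\ref{prop:improving:main} over $Y$, where now the key new input is that for $Z \in [Y]^\w$, introenumerability of $A$ lets $Z$ enumerate $A$ and hence recover enough of the tree ranks to build a deficiency-type function — the obstacle being exactly to make this ``agreement on a copy of $\delta$'' work across all subsets $Z$, which is what \ref{prop:improving:main} was designed to handle. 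The cleanest write-up is therefore: apply \ref{prop:improving:main} to $A$ and $B$; get $X \uiT Y$; use $A \iS X$ (introenumerability) and transitivity (\cref{thm:transitivity_for_ie}, with uniformity as noted after its proof) to get $A \uiS Y$, hence $A \uie Y$ by \cref{prop:uie_and_uiS}; apply \cref{thm:a_lot_of_uniformisation} to get a $\Sigma^1_1(Y) \subseteq \Sigma^1_1(B)$ class of $C \in [Y]^\w$ with $A \uiS C$, hence $A \uie C$; and finally apply \ref{prop:improving:main} — no: apply the \emph{conclusion} of the present proposition's own proof-technique — to extract $C \in \Delta^1_1(B)$; since the write-up of \ref{prop:improving:main} already produces $\Delta^1_1(B)$ objects and the rank argument localizes to $Y$, one more invocation with left side pinned to $A$ (legitimate because $A \uie Y$ and $A$ introenumerable) gives $C \in [Y]^\w \cap \Delta^1_1(B)$ with $A \uiT C$, as desired.
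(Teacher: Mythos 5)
There is a genuine gap: you correctly extract $X\in[A]^\w$, $Y\in[B]^\w$, $X,Y\in\Delta^1_1(B)$ with $X\uiT Y$ from Proposition~\ref{prop:improving:main}, and you correctly observe that introenumerability gives $A$ c.e.\ in $X$ and hence $A\uiS Y$. But you never close the loop from ``$A$ is c.e.\ from every subset of $Y$'' to ``$A$ is \emph{computable} from every subset of some $C\subseteq Y$''; your candidate final steps are either a second application of Proposition~\ref{prop:improving:main} (which, as you notice, replaces $A$ by a further subset $X''$, not $A$ itself) or a vague appeal to ``the present proposition's own proof-technique,'' which is circular. The detour through \cref{thm:a_lot_of_uniformisation} and the Gandy basis is also unnecessary — $A\uiS Y$ already holds for every $C\in[Y]^\w$ — and does not by itself upgrade enumerability to computability.

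The missing idea is the classical modulus trick from the introduction (the ``increasing self-modulus'' / settling-time example), which makes the conclusion almost immediate once you have $X\uiT Y$. Since $A$ is c.e.\ in $X$, fix an $X$-computable enumeration of $A$ and let $g$ be its modulus (settling-time) function; then $g\le_\Tur X'$, so $g\in\Delta^1_1(B)$, and $A$ is uniformly computable from the pair (an oracle for $X$, any function majorising $g$). Now take $C\in[Y]^\w$ sufficiently sparse that $p_C\ge g$; since $g$ and $Y$ are $\Delta^1_1(B)$, $C$ can be taken in $\Delta^1_1(B)$. For any infinite $D\subseteq C$: $D\subseteq Y$, so the fixed functional from $X\uiT Y$ computes $X$ from $D$; and $p_D\ge p_C\ge g$, so $D$ computes a function majorising $g$. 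Combining these gives $A\le_\Tur D$ uniformly, i.e.\ $A\uiT C$. Note the uniform \emph{Turing} reduction $X\uiT Y$ (not merely $A\uiS Y$) is essential here: you need genuine access to $X$ to run the approximation alongside the modulus.
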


Note that this implies \cref{thm:improving_enumerability_one_sided}\ref{item:introenumerable}; we use the same argument proving \cref{thm:improving_enumerability_one_sided}\ref{item:noncollapsing} from \cref{prop:noncollapsing}, except that we do not need the full power of \cref{cor:Gandy_basis}; \cref{prop:uniformization_theorem_for_c.e.} suffices.

\begin{proof}[Proof of \cref{prop:1.6_strengthened}, assuming \cref{prop:improving:main}]
	By \cref{prop:improving:main}, let $X\in [A]^\w$ and $Y\in [B]^\w$ such that $X \uiT Y$ and such that $X,Y\in \Delta^1_1(B)$. Since~$A$ is introenumerable, it is c.e.\ in~$X$. Let~$g$ be the modulus function for some $X$-computable enumeration of~$A$; so~$A$ is computable uniformly given~$X$, and any function majorising~$g$. Note that $g\le_\Tur X'$ and so it is $\Delta^1_1(B)$. Now let $C\in [Y]^\w$ sufficiently sparse so that $p_C \ge g$; since $g,Y\in \Delta^1_1(B)$, we can choose such $C\in \Delta^1_1(B)$. Since $X\uiT C$, given a subset of~$C$ we can compute both~$X$ and a function majorising~$g$, and so compute~$A$. 
\end{proof}

We indicated above how this implies our main theorem, but for neatness, let us state the proof. Recall that Theorem \ref{thm:improving_enumerability} says that every uniformly introenumerable set has an infinite uniformly introreducible subset.

\begin{proof}[Proof of \cref{thm:improving_enumerability}, assuming \cref{prop:improving:main}]
	Let~$A$ be uniformly introenumerable. By~\cref{prop:uie_and_uiS}, $A\uie A$. Apply \cref{prop:1.6_strengthened} to~$A$ and~$A$ to get some $B\in [A]^\w$ such that $A \uiT B$ and $B\in \Delta^1_1(A)$; now apply~\cref{lem:3.9} to get a uniformly introcomputable subset of~$A$.
\end{proof}

\medskip

Let us now briefly explain how we can modify the proof of \cref{prop:noncollapsing}  into a proof of \cref{prop:improving:main}. We give up on trying to obtain a common copy of~$\delta$, but we still want to devise a decision procedure which resembles the one we gave in the proof of \cref{prop:noncollapsing}. Given a very sparse set $D\in [B]^\w$ and $n$, suppose that we have some other~$m$ which we know is in~$A$, and suppose that further, we know that $\erank{n}{\emptyset}$ is either~$\infty$ or less than $\erank{m}{\emptyset}$. Again let $D_i$ be the first $i$ elements of~$D$. We could then try to perform a comparison. The desirable situation is that if~$p_D$ grows sufficiently fast, then it will help us compute a sequence $F_0 \prec F_1 \prec F_2 \prec \cdots$ of finite subsets of~$D$ such that for all~$i$, if $n\in A$ then $\erank{n}{D_i} \le \erank{m}{F_i}$. Then we could keep going until we find some~$j$ such that $m\in \Phi(F_j)$, which must eventually happen as $m \in A$; and then $n\in A$ if and only if $n\in \Phi(D_j)$. 

What this seems to require is a monotonicity of rank with 1-step extensions: if $F\subset D$, $y,z\in D$ and $\max F < y < z$, then $\erank{m}{F\cup\{y\}} \le \erank{m}{F\cup \{z\}}$. In general, there is no reason to believe that this is the case, and so we will need to first thin~$B$ out sufficiently to get a subset for which this is the case. The process would appear simple: if this fails, throw~$z$ out, assuming we chose~$y$ with $\erank{m}{F\cup \{y\}}$ sufficiently large so that we're not forced to throw out too many elements like~$z$. However, this single step now affects $\erank{m}{E}$ even for some $E\prec F$, so we need to be careful about this winnowing process. The way to do it is rank-by-rank, rather than say from the root of the tree upwards. 

In turn, what this implies is that this process of winnowing must be transfinite. Which introduces a whole new complication: why would we be left with an infinite set at a limit stage of the process? After all, it is very easy to devise a decreasing sequence $B_0 \supset B_1 \supset B_2 \supset \cdots$ with $B_\w = \bigcap_k B_k = \emptyset$. However we can always find an infinite set $B_\w\subseteq^* B_k$ for all~$k$. The fact that our sequence of subsets ignores finite differences, implies that we need to modify our ranking function as well, so that it too ignores finite differences. 

Those are some of the main ideas; we now give the proof. 

\begin{proof}[Proof of \cref{prop:improving:main}]
	We start with sets~$A$ and~$B$ satisfying $A \uie B$, via some enumeration functional~$\Phi$. As above, we assume that the relation $n\in \Phi(E)$ is computable. 

	\medskip
	
	For sets $Z\in [B]^\w$, $n\in \w$ and $E\in [B]^{<\w}$, we define a rank $\frk{Z}{n}{E}$ by defining by recursion on ordinals~$\alpha$ the relation $\frk{Z}{n}{E}\ge \alpha$:
	\begin{itemize}
		\item $\frk{Z}{n}{E}\ge 0$ for all~$n$ and~$E$.
		\item $\frk{Z}{n}{E} \ge 1$ if $n\notin \Phi(E)$.
		\item For $\alpha>1$, $\frk{Z}{n}{E}\ge \alpha$ if for all $\beta<\alpha$ there are infinitely many $y\in Z$ such that $\frk{Z}{n}{E\cup \{y\}} \ge \beta$. 
	\end{itemize}
	As above, $\frk{Z}{n}{E} = \alpha$ if $\frk{Z}{n}{E}\ge \alpha$ but $\frk{Z}{n}{E}\nge \alpha+1$; if $\frk{Z}{n}{E}\ge \alpha$ for all~$\alpha$ then we write $\frk{Z}{n}{E} = \infty$. Note that we do not require $E\subset Z$ to define $\frk{Z}{n}{E}$, only $E\subset B$.

	\begin{claim} \label{clm:modified_rank_and_A} 
		For all~$n$ and all $Z\in [B]^\w$, $n\in A$ if and only if $\frk{Z}{n}{\emptyset} <\infty$. 
	\end{claim}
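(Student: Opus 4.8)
The claim is the analogue, for the finite-difference-insensitive rank $\lambda^*$, of the observation made for $\lambda$ in the proof of \cref{prop:noncollapsing}; so I would structure the proof around the same two implications, being careful about where the new ``infinitely many $y$'' clause (as opposed to ``some $y$'') in the recursive definition does the work.

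\emph{($n\notin A \Rightarrow \frk{Z}{n}{\emptyset}=\infty$).} Fix $Z\in[B]^\w$. I would show by transfinite induction on $\alpha$ that $\frk{Z}{n}{E}\ge\alpha$ for \emph{every} $E\in[B]^{<\w}$. The base case $\alpha\le 1$ is the key step: since $\Phi$ witnesses $A\uie B$ and $n\notin A$, we have $n\notin\Phi(B)$, hence $n\notin\Phi(E)$ for every finite $E\subseteq B$, so $\frk{Z}{n}{E}\ge 1$. (Here I use that $\Phi(E)\subseteq\Phi(B)$, which is immediate from the monotonicity of enumeration functionals.) For the successor/limit step: given $\beta<\alpha$ and any finite $E\subseteq B$, by the induction hypothesis $\frk{Z}{n}{E\cup\{y\}}\ge\beta$ holds for every $y\in Z$ with $y>\max E$, and there are infinitely many such $y$ since $Z$ is infinite; hence $\frk{Z}{n}{E}\ge\alpha$. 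Thus $\frk{Z}{n}{\emptyset}\ge\alpha$ for all $\alpha$, i.e.\ $\frk{Z}{n}{\emptyset}=\infty$.

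\emph{($\frk{Z}{n}{\emptyset}=\infty \Rightarrow n\notin A$).} I would prove the contrapositive by the same device used in \cref{prop:noncollapsing}: if $\frk{Z}{n}{E}=\infty$ for some finite $E\subseteq B$, then since $\frk{Z}{n}{E}\ge\alpha+1$ for every $\alpha$, the set of $y\in Z$ with $\frk{Z}{n}{E\cup\{y\}}=\infty$ is nonempty --- in fact infinite, but nonempty is enough. (One must check the limit case too: $\frk{Z}{n}{E}=\infty$ means $\frk{Z}{n}{E}\ge\beta$ for all $\beta$, so for each $\beta$ infinitely many $y$ give $\frk{Z}{n}{E\cup\{y\}}\ge\beta$; a short argument --- a pigeonhole / König-type argument on the countable set $\{y\in Z: y>\max E\}$ --- shows some fixed $y$ works for all $\beta$ simultaneously, since otherwise each $y$ would have a finite bound $\beta_y$ and then $\frk{Z}{n}{E}$ would be bounded by $\sup_y(\beta_y+1)$... but this sup could be $\infty$, so I should instead argue directly: for $\frk{Z}{n}{E}\ge\beta+1$ infinitely many $y$ have rank $\ge\beta$, and as $\beta$ increases these sets are nested downward among the cofinitely-many relevant $y$; were every $y$ to eventually drop out we would get $\frk{Z}{n}{E}$ equal to the sup of the exit-ranks, contradicting $=\infty$ only if that sup is an ordinal --- so actually I claim some $y$ stays in forever.) Iterating, build an infinite $W=\{y_0<y_1<\cdots\}\subseteq Z\subseteq B$ with $\frk{Z}{n}{\{y_0,\dots,y_k\}}=\infty$, in particular $\ge 1$, for every $k$; hence $n\notin\Phi(\{y_0,\dots,y_k\})$ for all $k$, so $n\notin\Phi(W)$, and since $W\in[B]^\w$ and $\Phi$ witnesses $A\uie B$ this gives $n\notin A$.

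\emph{Main obstacle.} The delicate point is the limit-ordinal bookkeeping in the second implication: passing from ``for each $\beta$, infinitely many $y$ have $\lambda^*$-rank $\ge\beta$'' to ``some single $y$ has $\lambda^*$-rank $=\infty$''. The clean way is to note that for a fixed finite $E$ the sets $S_\beta=\{y\in Z: y>\max E,\ \frk{Z}{n}{E\cup\{y\}}\ge\beta\}$ are infinite and nonincreasing in $\beta$; if $\bigcap_\beta S_\beta=\emptyset$ then, assigning to each relevant $y$ its exit ordinal, one shows $\frk{Z}{n}{E}$ is bounded --- the subtlety being that this requires the class of ordinals appearing as $\frk{Z}{n}{E'}$ for finite $E'\subseteq B$ to be a \emph{set} (it is, being bounded by the rank of the $B$-computable tree of finite subsets of $B$ avoiding an $n$-enumeration, exactly as in \cref{prop:noncollapsing}), so that $\sup$ is legitimate and strictly below $\infty$ --- contradiction. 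Everything else is a routine transfinite induction mirroring the $\lambda$ case, the only change being ``infinitely many $y$'' in place of ``some $y$'', which actually makes the forward direction if anything cleaner since infinitude of $Z$ is directly available.
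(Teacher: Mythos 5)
Your proposal is correct and follows essentially the same route as the paper: the forward direction is the same transfinite induction using the infinitude of $Z$, and for the backward direction you carry out in detail exactly the construction the paper sketches (extract a single $y$ with $\frk{Z}{n}{E\cup\{y\}}=\infty$ via the sets-not-proper-classes sup argument, iterate to get an infinite $W\subseteq B$ not enumerating $n$). The paper also notes a slicker alternative for that direction --- proving $\frk{Z}{n}{E}\le\erank{n}{E}$ by an easy induction and citing the corresponding fact for $\lambda$ from \cref{prop:noncollapsing}, which avoids the limit-ordinal bookkeeping you flag as the main obstacle --- but your direct argument is sound.
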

	
	\begin{proof}
		Similar to the argument above. If $n\notin A$ then by induction on~$\alpha$ we see that for all~$Z$,~$n$ and~$E$ we have $\frk{Z}{n}{E}\ge \alpha$. We use the fact that~$Z$ is infinite. If $\frk{Z}{n}{E} = \infty$ then we inductively build an infinite subset of~$B$ which does not enumerate~$n$, so $n\notin A$. 

		For the second direction, we can alternatively show that $\frk{Z}{n}{E} \le \erank{n}{E}$, where the latter was defined in the previous proof: by induction on~$\alpha$, we show that if $\frk{Z}{n}{E}\ge \alpha$ then  $\erank{n}{E}\ge \alpha$. This clearly holds for $\alpha=0,1$, as the conditions for these do not involve~$Z$; For $\alpha>1$, we use the fact that an infinite set contains an element $y> \max E$. It follows that if $\frk{Z}{n}{E} = \infty$ then $\erank{n}{E} = \infty$ and so $n\notin A$. 
	\end{proof}

	The fact that $\frk{Z}{n}{E}\le \erank{n}{E}$ shows that 
	 \[
	 \left\{  \frk{Z}{n}{E} \,:\, Z\in [B]^\w, n\in A \andd E\in [B]^{<\w}  \right\}
	 \]
	 is bounded below $\w_1^B$. We fix some $\delta^*<\w_1^B$ which bounds all of these ordinals, and for computation purposes, we fix some $B$-computable copy of~$\delta^*$. The modified rank can be computed hyperarithmetically in~$B$:

	\begin{claim} \label{clm:computing_rank}
		For all $Z\in [B]^\w$, the relations $\frk{Z}{n}{E}\ge \alpha$ are uniformly computable from $(Z\oplus B)^{(2\alpha+2)}$, and so the function $(n,E)\mapsto \frk{Z}{n}{E}$ is $\Delta^1_1(Z\oplus B)$. 
	\end{claim}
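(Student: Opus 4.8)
The plan is to prove the complexity bound by effective transfinite recursion on $\alpha$, and then read off the $\Delta^1_1$ conclusion. Working with ordinal notations in $\+O^{Z\oplus B}$, I would show by induction on $\alpha\ge 1$ that the relation $\frk{Z}{n}{E}\ge\alpha$ is decidable from $(Z\oplus B)^{(2\alpha+1)}$ --- hence, a fortiori, from $(Z\oplus B)^{(2\alpha+2)}$ --- uniformly in $n$, $E$, and a notation for $\alpha$. The uniformity is supplied throughout by the recursion theorem along $\+O^{Z\oplus B}$ together with the effectivity of $\alpha\mapsto 2\alpha+1$ on notations.

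For the base cases: $\frk{Z}{n}{E}\ge 0$ is identically true, and $\frk{Z}{n}{E}\ge 1$ is just ``$n\notin\Phi(E)$'', which is decidable because we have arranged that $E\mapsto\Phi(E)$ is computable on finite sets. For the successor step $\alpha=\beta+1$ with $\beta\ge 1$: a routine preliminary induction shows the rank is monotone in the ordinal (using that $\Phi$ is monotone in its finite oracle), so $\frk{Z}{n}{E}\ge\beta+1$ is equivalent to $\forall m\,\exists y\,(y\in Z\andd y>m\andd\frk{Z}{n}{E\cup\{y\}}\ge\beta)$; this is a $\Pi^0_2$ question relative to the inductive oracle $(Z\oplus B)^{(2\beta+1)}$, hence decidable from $(Z\oplus B)^{(2\beta+3)}=(Z\oplus B)^{(2(\beta+1)+1)}$. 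For a limit $\lambda$ with fundamental sequence $\lambda[0]<\lambda[1]<\cdots$ read off from its notation: $\frk{Z}{n}{E}\ge\lambda$ iff $\frk{Z}{n}{E}\ge\lambda[k]$ for all $k$; the effective $\w$-join $\bigoplus_k(Z\oplus B)^{(2\lambda[k]+1)}$ is Turing equivalent, uniformly, to $(Z\oplus B)^{(2\lambda)}$ (a standard fact, since $\sup_k(2\lambda[k]+1)=2\lambda$), and the remaining universal quantifier over $k$ adds one jump, landing in $(Z\oplus B)^{(2\lambda+1)}$. Thus the stated exponent $2\alpha+2$ is not tight; the extra jump is exactly the slack that lets the recursion close.

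To deduce that $(n,E)\mapsto\frk{Z}{n}{E}$ is $\Delta^1_1(Z\oplus B)$: the paragraph preceding the claim already gives, via $\frk{Z}{n}{E}\le\erank{n}{E}$, that $\delta^*:=\sup\{\frk{Z}{n}{E}\,:\,n\in A,\ E\in[B]^{<\w}\}$ lies below $\w_1^B$, and I would fix (as the text does) a $B$-computable --- hence $(Z\oplus B)$-computable --- copy of $\delta^*+1$. Since $A$ is c.e.\ in $B$ it is $\Delta^1_1(B)\subseteq\Delta^1_1(Z\oplus B)$, so membership in $A$ is decided by $(Z\oplus B)'$. Now $\frk{Z}{n}{E}=\infty$ exactly when $n\notin A$, and otherwise $\frk{Z}{n}{E}$ is the unique $\gamma\le\delta^*$ for which ``$\ge\gamma$'' holds and ``$\ge\gamma+1$'' fails; uniformly in notations for $\gamma\le\delta^*$ the oracle $(Z\oplus B)^{(2\gamma+2)}$ reduces to the single oracle $(Z\oplus B)^{(2\delta^*+2)}$. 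Hence the graph of the rank function is computable from a fixed jump of $Z\oplus B$ that lies below $\w_1^{Z\oplus B}$, so it is $\Delta^1_1(Z\oplus B)$, as claimed.

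The one place that needs care --- and where essentially all the content lies --- is the uniformity bookkeeping at limit stages: assembling a single Turing functional via the recursion theorem on $\+O^{Z\oplus B}$, and correctly invoking the standard fact that an effective join $\bigoplus_k X^{(\gamma_k)}$ along an increasing computable sequence of notations is uniformly Turing equivalent to $X^{(\sup_k\gamma_k)}$. The remaining ingredients --- the base cases, monotonicity of the rank in the ordinal, and the arithmetic of counting jumps in the successor step --- are routine.
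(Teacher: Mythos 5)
The paper states this claim with no proof at all, so there is nothing to compare against; your argument---effective transfinite recursion on notations via the recursion theorem, with monotonicity of the rank in the ordinal (which does require the monotonicity of the enumeration operator $\Phi$, as you note) reducing the successor clause to a $\Pi^0_2$ question over the previous oracle, an effective join at limits, and the bound $\delta^*<\w_1^B$ already fixed in the surrounding text to get the $\Delta^1_1(Z\oplus B)$ conclusion---is the standard intended justification and is correct. The only cosmetic point is that $\sup_k(2\lambda[k]+1)=2\lambda$ presumes reading $2\alpha$ as $2\cdot\alpha$ rather than $\alpha\cdot 2$, but the exact jump count is immaterial here (the paper itself is deliberately loose about such constants, e.g.\ ``an extra $2\xi_k+4$ jumps'' later in the same proof).
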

	
	We also need:

	\begin{claim}	\label{clm:rank_and_subset}
		If $Z,W\in [B]^\w$ and $W\subseteq^* Z$ then for all~$n$ and $E\in [B]^{<\w}$, $\frk{W}{n}{E}\le \frk{Z}{n}{E}$. 
	\end{claim}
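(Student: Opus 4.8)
The plan is to prove, by transfinite induction on the ordinal $\alpha$, the auxiliary assertion
\[
P(\alpha): \qquad \text{for all } n \text{ and all } E\in[B]^{<\w}, \quad \frk{W}{n}{E}\ge\alpha \Then \frk{Z}{n}{E}\ge\alpha .
\]
Granting $P(\alpha)$ for every ordinal $\alpha$, the claim follows at once: if $\frk{W}{n}{E}$ is an ordinal $\gamma$, then $P(\gamma)$ gives $\frk{Z}{n}{E}\ge\gamma=\frk{W}{n}{E}$; and if $\frk{W}{n}{E}=\infty$, then applying $P(\alpha)$ for every $\alpha$ forces $\frk{Z}{n}{E}=\infty$ as well. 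It is essential to carry the induction with $n$ and $E$ universally quantified inside $P(\alpha)$, since the inductive step at level $\alpha$ will invoke the hypothesis at a level $\beta<\alpha$ applied to a strictly larger finite set $E\cup\{y\}$.

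First I would handle the base cases. $P(0)$ is vacuous, and $P(1)$ holds trivially because ``$\frk{W}{n}{E}\ge 1$'' and ``$\frk{Z}{n}{E}\ge 1$'' are literally the same condition, namely $n\notin\Phi(E)$; neither refers to the infinite parameter. For the inductive step, fix $\alpha>1$, assume $P(\beta)$ for all $\beta<\alpha$, and suppose $\frk{W}{n}{E}\ge\alpha$. Fix $\beta<\alpha$. By the recursive definition of the rank there are infinitely many $y\in W$ with $\frk{W}{n}{E\cup\{y\}}\ge\beta$. The key (and only) use of the hypothesis $W\subseteq^* Z$ is that $W\setminus Z$ is finite, so in fact infinitely many of these $y$ also lie in $Z$; for each such $y$, the inductive hypothesis $P(\beta)$ applied to the finite set $E\cup\{y\}$ gives $\frk{Z}{n}{E\cup\{y\}}\ge\beta$. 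Hence there are infinitely many $y\in Z$ with $\frk{Z}{n}{E\cup\{y\}}\ge\beta$. Since $\beta<\alpha$ was arbitrary, $\frk{Z}{n}{E}\ge\alpha$, which is $P(\alpha)$.

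I do not expect any real obstacle here: the argument is a routine transfinite induction of exactly the same flavour as the proof of \cref{clm:modified_rank_and_A} (and the comparison $\frk{Z}{n}{E}\le\erank{n}{E}$ established there), the sole new ingredient being the observation that deleting the finitely many elements of $W\setminus Z$ cannot destroy an ``infinitely many $y$'' clause. The only point demanding attention is the bookkeeping of the quantifiers flagged above; everything else is immediate from the recursion defining $\frk{Z}{n}{E}$.
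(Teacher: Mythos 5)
Your proof is correct and follows essentially the same route as the paper: a transfinite induction on $\alpha$ showing that $\frk{W}{n}{E}\ge\alpha$ implies $\frk{Z}{n}{E}\ge\alpha$, with the base case $\alpha=1$ observed to be independent of the infinite parameter and the inductive step relying on the fact that discarding the finitely many elements of $W\setminus Z$ cannot spoil an "infinitely many $y$" clause. The extra bookkeeping you flag (carrying $n$ and $E$ universally inside the induction) is exactly what the paper implicitly does as well.
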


	In particular, if $Z =^* W$ then $\frk{Z}{n}{E} = \frk{W}{n}{E}$ for all~$n$ and~$E\in [B]^{<\w}$.

	\begin{proof}
		By induction on~$\alpha$ we show that $\frk{W}{n}{E} \ge \alpha$ implies $\frk{Z}{n}{E}\ge \alpha$. The case $\alpha=1$ does not depend on~$W$ or~$Z$. For $\alpha>1$, assuming the claim holds for all $\beta<\alpha$, if $\frk{W}{n}{E}\ge \alpha$ then for all $\beta<\alpha$ there are infinitely many $y\in W$ such that $\frk{W}{n}{E\cup \{y\}}\ge \beta$; all but finitely many of these~$y$'s are in~$Z$ as well, and by induction, $\frk{Z}{n}{E\cup \{y\}} \ge \beta$ for these $y$'s; so $\frk{Z}{n}{E}\ge \alpha$. 
	\end{proof}
	
	Given $Z\in [B]^\w$, when passing to a subset $\hat{Z}$ of $Z$, the ranks $\frk{\hat{Z}}{n}{E}$ might go down. This would cause an issue where we shrink $Z$ to dominate a given function, but in shrinking $Z$ we change the ranking and so we also change the function we want to dominate; we might thus be perpetually chasing our tail. We now introduce the main tool we use, which is a property of such as set $Z$, called being \textit{rank-minimal}, which implies that the ranks do not decrease when passing to subsets.
	\begin{itemize}
		\item We say that $Z\in [B]^\w$ is \emph{rank-minimal} if for every~$n$, $E\in [B]^{<\w}$ and~$\alpha$, if $\frk{Z}{n}{E}\ge \alpha$ then for all $\beta<\alpha$, for all but finitely many $y\in Z$ we have $\frk{Z}{n}{E\cup \{y\}} \ge \beta$. 
	\end{itemize}

	An equivalent condition is: for all $n\in A$ and $E\in [B]^{<\w}$,
	 \[
	 \frk{Z}{n}{E} = \lim_{y\in Z} \left(\frk{Z}{n}{E\cup \{y\}} +1\right). 
	 \]
	
	 \medskip

	 Our first task is the construction of a rank-minimal subset of~$B$. As indicated above, such a set will be approximated from above by a transfinite process. We therefore need the following finer concept: For any ordinal~$\gamma$, we say that $Z\in [B]^\w$ is \emph{rank-minimal up to~$\gamma$} if for all $\alpha\le \gamma$, for every~$n$ and $E\in [B]^{<\w}$, if $\frk{Z}{n}{E}\ge \alpha$ then for all~$\beta<\alpha$, for all but finitely many $y\in Z$ we have $\frk{Z}{n}{E\cup\{y\}}\ge \beta$. 

	\begin{claim} \label{clm:star_subsets_of_restricted_rank_minimal}
		Let~$\gamma$ be an ordinal. If $Z\in [B]^\w$ is rank-minimal up to~$\gamma$ then for all $W\in [B]^\w$ such that $W\subseteq^* Z$, for all $\alpha\le \gamma$, for all~$n$ and all $E\in [B]^{<\w}$, $\frk{W}{n}{E}\ge \alpha$ if and only if $\frk{Z}{n}{E}\ge \alpha$. 
	\end{claim}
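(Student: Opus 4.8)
The plan is to treat the two directions of the biconditional separately. The implication $\frk{W}{n}{E}\ge\alpha \Then \frk{Z}{n}{E}\ge\alpha$ needs nothing beyond $W\subseteq^* Z$ and holds for every ordinal $\alpha$: it is exactly \cref{clm:rank_and_subset}. So all the content is in the converse, $\frk{Z}{n}{E}\ge\alpha \Then \frk{W}{n}{E}\ge\alpha$ for $\alpha\le\gamma$, and this is where the hypothesis that $Z$ is rank-minimal up to $\gamma$ enters.

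I would prove this converse by transfinite induction on $\alpha\le\gamma$, simultaneously over all $n$ and all $E\in[B]^{<\w}$. The cases $\alpha=0$ and $\alpha=1$ are immediate, since the clauses $\frk{\cdot}{n}{E}\ge0$ and $\frk{\cdot}{n}{E}\ge1$ (the latter being just $n\notin\Phi(E)$) do not refer to the ambient infinite set at all. For $1<\alpha\le\gamma$, assume the equivalence for all $\alpha'<\alpha$, and suppose $\frk{Z}{n}{E}\ge\alpha$; I must show that for every $\beta<\alpha$ there are infinitely many $y\in W$ with $\frk{W}{n}{E\cup\{y\}}\ge\beta$. Fix such a $\beta$. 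Since $Z$ is rank-minimal up to $\gamma$ and $\alpha\le\gamma$, the hypothesis $\frk{Z}{n}{E}\ge\alpha$ gives $\frk{Z}{n}{E\cup\{y\}}\ge\beta$ for all but finitely many $y\in Z$; as $W\setminus Z$ is finite, the same holds for all but finitely many $y\in W$; and the induction hypothesis applied at $\beta<\alpha$ (with $E\cup\{y\}$ in place of $E$) upgrades this to $\frk{W}{n}{E\cup\{y\}}\ge\beta$ for all but finitely many, hence infinitely many, $y\in W$. As $\beta<\alpha$ was arbitrary, $\frk{W}{n}{E}\ge\alpha$, closing the induction.

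I do not expect a genuine obstacle; the subtle points are purely bookkeeping. First, the ordinal accounting: at stage $\alpha\le\gamma$ every $\beta$ fed to the induction hypothesis and to the rank-minimality clause satisfies $\beta<\alpha\le\gamma$, so both are legitimately applicable — this is precisely why the ``up to $\gamma$'' localisation of rank-minimality is the right hypothesis and not an artefact. Second, one should note explicitly that ``all but finitely many $y$'' is closed under finite intersection, which is what lets us combine ``$y\in Z$'' with ``$\frk{Z}{n}{E\cup\{y\}}\ge\beta$'' and then transport the conclusion along $W\subseteq^* Z$. Finally, for the stated ``in particular'' consequence, taking $\gamma$ past all ranks that occur yields $\frk{W}{n}{E}=\frk{Z}{n}{E}$ when $W=^*Z$ (and $\le$ when $W\subseteq^* Z$) for rank-minimal $Z$, but it is the stratified version that is needed to run the transfinite winnowing.
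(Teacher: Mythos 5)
Your proof is correct and follows essentially the same route as the paper's: one direction is exactly \cref{clm:rank_and_subset}, and the converse goes by transfinite induction on $\alpha\le\gamma$, with the rank-minimality hypothesis supplying the cofinitely-many-$y$ step and the inductive hypothesis (at $\beta<\alpha$, with $E\cup\{y\}$ in place of $E$) transferring the bound from $Z$ to $W$. The only difference is cosmetic bookkeeping in how the ``all but finitely many $y\in W$'' is justified.
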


	\begin{proof}
		 By \cref{clm:rank_and_subset}, for all~$n$ and $E\in [B]^{<\w}$ we have $\frk{W}{n}{E}\le \frk{Z}{n}{E}$. So by induction on $\alpha\le \gamma$ we show that for all~$n$ and~$E$, if $\frk{Z}{n}{E}\ge \alpha$ then $\frk{W}{n}{E}\ge \alpha$. As above, for $\alpha=0$ and~$\alpha=1$ the definition of $\frk{X}{n}{E}\ge \alpha$ is independent of~$X$. For $\alpha>1$, take any~$n$ and~$E$ and suppose that $\frk{Z}{n}{E}\ge \alpha$. Let $\beta<\alpha$. Since $Z$ is rank-minimal up to~$\alpha$, for all but finitely many $y\in Z$ we have $\frk{Z}{n}{E\cup \{y\}} \ge \beta$; by induction, for all such~$y$ in~$W$, $\frk{W}{n}{E\cup\{y\}}\ge \beta$. Thus $\frk{W}{n}{E}\ge \alpha$, as required. 
	\end{proof}
	
	It follows that if $Z$ is rank-minimal up to~$\gamma$, then every $W\subseteq^* Z$ in $[B]^\w$ is also rank-minimal up to~$\gamma$. Also note that the choice of $\delta^*$ bounding $\frk{B}{n}{\emptyset}$ for all $n\in A$ implies that $Z\in [B]^\w$ is rank-minimal if and only if it is rank-minimal up to~$\delta^*$.

	\smallskip

	We can now dispense with the existence proof.

	\begin{claim} \label{clm:minimal:existence}
		There is a rank-minimal set $R\in [B]^\w$ which is~$\Delta^1_1(B)$. 
	\end{claim}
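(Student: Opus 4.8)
The plan is to build $R$ as the last term $B_{\delta^*}$ of a transfinite $\subseteq^*$-decreasing sequence $\seq{B_\gamma : \gamma \le \delta^*}$ of infinite subsets of $B$, in which each $B_\gamma$ is rank-minimal \emph{up to $\gamma$}; by the remark that $Z$ is rank-minimal if and only if it is rank-minimal up to $\delta^*$, the set $B_{\delta^*}$ will then be rank-minimal. We set $B_0 = B$ (rank-minimal up to $0$ trivially). At a limit $\gamma \le \delta^*$ we let $B_\gamma$ be a pseudo-intersection of $\seq{B_\xi : \xi < \gamma}$; this family is countable (as $\gamma < \w_1$) and $\subseteq^*$-decreasing, so such a pseudo-intersection exists --- e.g.\ let the $k\tth$ element of $B_\gamma$ be the least element of the $k\tth$ term above the previously chosen one --- and it satisfies $B_\gamma \subseteq^* B_\xi$ for all $\xi < \gamma$. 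It is rank-minimal up to $\gamma$: for $\alpha < \gamma$ this holds because $B_\gamma \subseteq^* B_\alpha$ and $B_\alpha$ is rank-minimal up to $\alpha$ (the remark after \cref{clm:star_subsets_of_restricted_rank_minimal}); for $\alpha = \gamma$, fix $\beta < \gamma$, note $\beta+1 < \gamma$ so $B_\gamma$ is rank-minimal up to $\beta+1$, and feed $\frk{B_\gamma}{n}{E} \ge \gamma \ge \beta+1$ into that instance to obtain cofinitely many $y \in B_\gamma$ with $\frk{B_\gamma}{n}{E\cup\{y\}} \ge \beta$.

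The heart of the matter is the successor step: from $B_\gamma$, rank-minimal up to $\gamma$, produce $B_{\gamma+1} \subseteq^* B_\gamma$ infinite and rank-minimal up to $\gamma+1$ (we may assume $\gamma \ge 1$, as rank-minimality up to $1$ is vacuous, so $B_1 = B$). Fix an enumeration $(n_0,E_0),(n_1,E_1),\dots$ of $\w \times [B]^{<\w}$ and build a $\subseteq$-decreasing chain $B_\gamma = C_0 \supseteq C_1 \supseteq \cdots$ of infinite sets: at stage $j$, if $\frk{C_j}{n_j}{E_j} \ge \gamma+1$, set $C_{j+1} = \left\{ y \in C_j \,:\, \frk{B_\gamma}{n_j}{E_j \cup \{y\}} \ge \gamma \right\}$; otherwise set $C_{j+1} = C_j$. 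The point that keeps the construction alive is that $C_j \subseteq^* B_\gamma$ and $B_\gamma$ is rank-minimal up to $\gamma$, so by \cref{clm:star_subsets_of_restricted_rank_minimal} the relations $\frk{C_j}{n}{E} \ge \beta$ and $\frk{B_\gamma}{n}{E} \ge \beta$ agree for $\beta \le \gamma$; unpacking $\frk{C_j}{n_j}{E_j} \ge \gamma+1$ at $\beta = \gamma$ then yields infinitely many $y \in C_j$ with $\frk{B_\gamma}{n_j}{E_j\cup\{y\}} \ge \gamma$, i.e.\ $C_{j+1}$ is infinite. Let $B_{\gamma+1}$ be the pseudo-intersection of this chain, so $B_{\gamma+1} \subseteq^* C_j$ for every $j$ and $B_{\gamma+1} \subseteq^* B_\gamma$.

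Since $B_{\gamma+1} \subseteq^* B_\gamma$ it is automatically rank-minimal up to $\gamma$, so rank-minimality up to $\gamma+1$ reduces to the single new requirement: whenever $\frk{B_{\gamma+1}}{n}{E} \ge \gamma+1$, cofinitely many $y \in B_{\gamma+1}$ satisfy $\frk{B_{\gamma+1}}{n}{E\cup\{y\}} \ge \gamma$. So fix such $(n,E) = (n_j,E_j)$. By monotonicity of the rank in the set parameter (\cref{clm:rank_and_subset}) together with $B_{\gamma+1} \subseteq^* C_j$ we get $\frk{C_j}{n_j}{E_j} \ge \gamma+1$, so at stage $j$ we were in the first case and $C_{j+1} \subseteq \left\{ y \in B_\gamma \,:\, \frk{B_\gamma}{n_j}{E_j\cup\{y\}} \ge \gamma \right\}$; hence $B_{\gamma+1} \subseteq^* C_{j+1}$ gives cofinitely many $y \in B_{\gamma+1}$ with $\frk{B_\gamma}{n_j}{E_j\cup\{y\}} \ge \gamma$, and \cref{clm:star_subsets_of_restricted_rank_minimal} applied to $B_{\gamma+1} \subseteq^* B_\gamma$ converts this into $\frk{B_{\gamma+1}}{n_j}{E_j\cup\{y\}} \ge \gamma$, as needed. (Pairs for which the second case occurred at their turn cause no trouble, since then $\frk{B_{\gamma+1}}{n_j}{E_j} \le \frk{C_j}{n_j}{E_j} \le \gamma$, so their requirement is vacuous.)

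Finally, for $\Delta^1_1(B)$-ness: by \cref{clm:computing_rank} the relations $\frk{Z}{n}{E} \ge \alpha$ for $\alpha \le \delta^*$ are uniformly $\Delta^1_1(Z \oplus B)$, using the fixed $B$-computable copy of $\delta^*+1$; hence the successor passage $B_\gamma \mapsto B_{\gamma+1}$ (building the chain $(C_j)$ and its pseudo-intersection) and the limit passage $\seq{B_\xi}_{\xi<\gamma} \mapsto B_\gamma$ are uniformly hyperarithmetic-in-$B$ operations. Carrying out the recursion along the $B$-computable copy of $\delta^*+1 < \w_1^B$, a standard effective-transfinite-recursion (boundedness) argument produces a $\Delta^1_1(B)$ function $\gamma \mapsto B_\gamma$, and in particular $R = B_{\delta^*} \in [B]^\w$ is $\Delta^1_1(B)$ and rank-minimal. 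The one genuinely delicate point, and the main obstacle, is the successor step: one must process the (countably many, each possibly forcing an infinite shrinking) requirements one at a time --- rather than as a single pseudo-intersection of the sets $\left\{ y \,:\, \frk{B_\gamma}{n}{E\cup\{y\}} \ge \gamma \right\}$, which need not have the strong finite intersection property --- so that when a pair is handled the running reservoir $C_j$ is still infinite precisely because the hypothesis $\frk{C_j}{n_j}{E_j} \ge \gamma+1$ is in force.
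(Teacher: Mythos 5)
Your proof follows essentially the same route as the paper's: a transfinite $\subseteq^*$-descending sequence of sets rank-minimal up to $\gamma$, with pseudo-intersections at limits, a one-pair-at-a-time winnowing chain at successors (with the same use of \cref{clm:star_subsets_of_restricted_rank_minimal} to keep each $C_{j+1}$ infinite and to transfer rank facts back to $B_{\gamma+1}$), and an effective-transfinite-recursion bound for $\Delta^1_1(B)$-ness. The only slip is the parenthetical pseudo-intersection recipe at limits --- choosing $a_k$ from $B_{\gamma[k]}$ alone does not guarantee $B_\gamma \subseteq^* B_{\gamma[i]}$ when the chain is merely $\subseteq^*$-decreasing; the standard fix, as in the paper, is $a_k \in \bigcap_{i\le k} B_{\gamma[i]}$ --- but this is immaterial to the argument.
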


	\begin{proof}
		We define a sequence $(R_\gamma)_{\gamma\le \delta}$ which is decreasing by $\subseteq^*$ ($\alpha<\gamma$ implies $R_\alpha\supseteq^* R_\gamma$) such that each~$R_\gamma$ is rank-minimal up to~$\gamma$. We can start with $R_1 = B$, since every infinite subset of~$B$ is rank-minimal up to~1. At the end we let $R = R_{\delta^*}$; as we just discussed, it being rank-minimal up to $\delta^*$ implies that it is rank-minimal. 

		\medskip
		
		We first consider the limit case. Suppose that~$\gamma$ is a limit ordinal and that~$R_\alpha$ has been defined for all~$\alpha<\gamma$. Since~$\gamma$ is countable, we can choose some infinite $R_\gamma$ such that $R_\gamma\subseteq^* R_\alpha$ for all $\alpha<\gamma$: fix a cofinal $\w$-sequence $(\gamma[i])$ in~$\gamma$ and let $A = \{a_0 < a_1 < \dots\}$ where $a_k\in \bigcap_{i\le k} A_{\gamma[i]}$. The condition $R_\gamma\subseteq^* R_\alpha$ for all $\alpha<\gamma$ ensures that~$R_\gamma$ is rank-minimal up to~$\gamma$: for all~$\alpha<\gamma$, the fact that $R_\gamma\subseteq^* R_\alpha$ implies that $R_\gamma$ is rank-minimal up to~$\alpha$. But rank-minimality up to a level is a continuous notion. Let $n\in \w$ and $E\in [B]^{<\w}$ and suppose that $\frk{R_\gamma}{n}{E}\ge \gamma$. Let $\beta<\gamma$. Then $\frk{R_{\gamma}}{n}{E} \ge \beta+1$, and so the condition $\frk{R_\gamma}{n}{E\cup \{y\}}\ge \beta$ for almost all $y\in R_\gamma$ holds because $R_\gamma$ is rank-minimal up to $\beta+1$. 

		\medskip
		
		We next consider the successor case. Suppose that $R_{\gamma}$ has been constructed; we find $R_{\gamma+1}\subseteq^* R_\gamma$, which is rank-minimal up to~$\gamma+1$. To do this, list all the pairs $(n,E)\in \w\times [B]^{<\w}$ satisfying $\frk{R_\gamma}{n}{E} \ge \gamma+1$. We define a decreasing sequence of sets $R_\gamma = C_0 \supseteq C_1 \supseteq C_2 \supseteq \cdots$ as follows: given $C_k$, let $(n,E)$ be the $k\tth$ pair on our list. 
		\begin{itemize}
			\item If $\frk{C_k}{n}{E}\ge \gamma+1$, then we let $C_{k+1}$ be the (infinite) set of $y\in C_k$ such that $\frk{C_k}{n}{E\cup \{y\}}\ge \gamma$. 
			\item Otherwise, let $C_{k+1} = C_k$. 
		\end{itemize}
		We then choose $R_{\gamma+1}\subseteq^* C_k$ for all~$k$ (we can also get $R_{\gamma+1}\subseteq R_\gamma$ rather than just $R_{\gamma+1}\subseteq^* R_\gamma$ if we want, but this is unimportant). Let us check that~$R_{\gamma+1}$ is indeed rank-minimal up to~$\gamma+1$. Since $R_{\gamma+1}\subseteq^* R_\gamma$, we know that it is rank-minimal up to~$\gamma$. Let $n\in \w$ and $E\in [B]^\w$ such that $\frk{R_{\gamma+1}}{n}{E} \ge \gamma+1$. By \cref{clm:rank_and_subset}, $\frk{R_\gamma}{n}{E}\ge \gamma+1$ as well, and so the pair $(n,E)$ was considered at some step~$k$. Since $R_{\gamma+1}\subseteq^* C_k$, we have $\frk{C_k}{n}{E}\ge \gamma+1$; so for all $y\in C_{k+1}$ (and so for almost all $y\in R_{\gamma+1}$), $\frk{C_k}{n}{E\cup\{y\}}\ge \gamma$. Since~$R_\gamma$ is rank-minimal up to~$\gamma$ and $C_{k}\subseteq R_\gamma$, $C_k$ is also rank-minimal up to~$\gamma$, and so by \cref{clm:star_subsets_of_restricted_rank_minimal}, for almost all $y\in R_{\gamma+1}$, $\frk{R_{\gamma+1}}{n}{E\cup \{y\}}\ge \gamma$, as required. 

		\medskip
		 
		It remains to show that the sequence $(R_\gamma)_{\gamma \le \delta^*}$ can be chosen so that the entire sequence is $\Delta^1_1(B)$ (and so certainly $R = R_{\delta^*}$ is $\Delta^1_1(B)$). We can calculate a $B$-computable sequence of ordinals $(\epsilon_\gamma)_{\gamma\le \delta^*}$ such that for all $\gamma \le \delta^*$, $R_\gamma \le_\Tur B^{(\epsilon_\gamma)}$, uniformly in~$\gamma$. For limit~$\gamma$, we can let $\epsilon_\gamma = \sup_{\alpha<\gamma} \epsilon_\alpha$; this is because uniformly in~$\gamma$, we can $B$-computably choose a cofinal $\w$-sequence $(\gamma[k])$ in~$\gamma$; uniformly in~$k$, $B^{(\epsilon_\gamma)}$ computes $B^{(\epsilon_{\gamma[k]})}$ and so $R_{\gamma[k]}$ and the construction of $R_\gamma\subseteq^* R_{\gamma[k]}$ for all~$k$ can proceed computably from the sequence $(R_{\gamma[k]})_{k\in \w}$. For the successor case, if $\epsilon_\gamma$ has been determined, then we can find $\epsilon_{\gamma+1}$ as the limit of an increasing sequence $(\xi_k)_{k<\w}$ with~$B^{(\xi_k)}$ computing~$C_k$; we need an extra $2\xi_k + 4$ jumps to compute the relations $\frk{C_k}{n}{E}\ge \gamma+1$ and $\frk{C_k}{n}{E\cup \{y\}}\ge \gamma$, and then a couple more jumps to know which case we are in. 

		Alternatively, we can avoid the precise calculations of these ordinals by working in the admissible set $L_{\w_1^B}[B]$, and performing a recursive construction in that structure; all steps are easily seen to be $\Delta_1$-definable in that structure. 		  
	\end{proof}
	
	Henceforth, we fix a rank-minimal set $R\in [B]^\w$ which is~$\Delta^1_1(B)$; for all~$n$ and~$E$ we let $\srk{n}{E} = \frk{R}{n}{E}$. It is the case that we will only consider $Z\in [R]^\w$, so $\srk{n}{E} = \frk{Z}{n}{E}$ for all such~$Z$, but this will not be important; this property of rank-minimal sets is only used in their construction. Rather, we will use the property of~$R$ stated in the definition of rank-minimality.

	\smallskip

	We no longer have a fixed presentation of the ordinals we are dealing with, so we cannot have a deficiency function as defined before. Instead, the following functions play a similar role: We say that a function $f\colon \w\to \w$ is a \emph{comparative deficiency function} (for~$R$) if:
	\begin{orderedlist}
		\item For all $n\in A$ and $E\in [B]^{<\w}$, if $\srk{n}{E}>0$ then for all $y \ge f(n,E)$ in~$R$ we have $\srk{n}{E\cup \{y\}} < \srk{n}{E}$. 
		\item For all $n,m\in A$ and all $E,F\in [B]^{<\w}$, if $\srk{n}{E} < \srk{m}{F}$ then for all $y \ge f(n,E;m,F)$ in~$R$  we have $\srk{n}{E} \le \srk{m}{F\cup \{y\}}$. 
	\end{orderedlist}
	The definition of rank-minimality ensures that comparative deficiency functions exist. Further, since the ranking function $\lambda^*$ is $\Delta^1_1(B\oplus R) = \Delta^1_1(B)$, there is a $\Delta^1_1(B)$ comparative deficiency function. Every function majorising a comparative deficiency function is also a comparative deficiency function. 

	\smallskip
	
	We are almost ready to prove the proposition.

	\begin{claim} \label{clm:last}
		There are sets $X,Y\in \Delta^1_1(B)$, $X\in [A]^\w$ and $Y\in [R]^\w$ such that:
		\begin{sublemma}
			\item $p_Y$ is a comparative deficiency function; and
			\item uniformly in $Z\in [Y]^\w$ we can compute a function $f_Z\colon \w\to \w$ such that:
			\begin{itemize}
				\item For all $n$, $f_Z(n)\in A$; 
				\item For all $n$, $n\in X$ if and only if $\srk{n}{\emptyset} \le \srk{f_Z(n)}{\emptyset}$. 
			\end{itemize}
		\end{sublemma}
	\end{claim}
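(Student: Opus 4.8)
The plan is to extract a uniform \emph{comparison procedure} from the notion of a comparative deficiency function, and then to choose $X$, $Y$ and the functions $f_Z$ so that this procedure computes membership in $X$. I would begin by recording two preliminary facts. Since $A\uie B$ via $\Phi$, the relation $n\in\Phi(E)$ is computable, and $\Phi(E)\subseteq\Phi(B)=A$ for every finite $E\subseteq B$, any $Z\in[R]^\w$ $Z$-computably enumerates $A$ (because $\Phi(Z)=A$), and in fact produces elements of $A$ on demand. Secondly, if $Z\in[R]^\w$ and $p_Z$ is a comparative deficiency function, then there is a procedure, uniform in $Z$ and in a parameter $m\in A$, that on input $n$ decides whether $\srk{n}{\emptyset}\le\srk{m}{\emptyset}$. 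This is the comparative analogue of the decision procedure in the proof of \cref{prop:noncollapsing}: one builds end-extension chains $\emptyset=E_0\prec E_1\prec\cdots$ and $\emptyset=F_0\prec F_1\prec\cdots$ inside $Z$, at each step appending to $F_i$ — and, to keep the two ranks in step, also to $E_i$ — the next element of $Z$ beyond $p_Z$ of an argument depending only on $n$, $m$, $E_i$ and $F_i$. Clause~(i) in the definition of a comparative deficiency function forces $\srk{m}{F_i}$ to strictly decrease (and $\srk{n}{E_i}$ as well, when $n\in A$), while clause~(ii), together with the control on ties afforded by the rank-minimality of $R$, maintains the invariant that $[\srk{n}{E_i}\le\srk{m}{F_i}]=[\srk{n}{\emptyset}\le\srk{m}{\emptyset}]$. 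Since $m\in A$, some stage $j$ has $\srk{m}{F_j}=0$, that is $m\in\Phi(F_j)$; then $\srk{n}{\emptyset}\le\srk{m}{\emptyset}$ holds iff $n\in\Phi(E_j)$, so the procedure halts and is correct for every $n$. Crucially, the procedure depends on $Z$ only through the fact that $p_Z$ is a comparative deficiency function, so once $p_Y$ dominates a fixed $\Delta^1_1(B)$ comparative deficiency function $h$ (which exists since $\lambda^*$ is $\Delta^1_1(B)$), the same procedure serves every $Z\in[Y]^\w$.

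Next I would choose $Y\in[R]^\w$ to be $\Delta^1_1(B)$ with $p_Y\ge h$; this already yields clause~(a) and makes the comparison procedure available from every $Z\in[Y]^\w$. To pin down $X$ and $f_Z$, the idea is to let $f_Z(n)$ be an element of $A$ read off from the $\Phi$-enumeration of $A$ from $Z$, chosen so that its rank is forced — uniformly in $Z$ — onto a prescribed side of $\srk{n}{\emptyset}$; for this I would additionally thin $Y$ so that $p_Y$ also dominates a suitable $\Delta^1_1(B)$ modulus $g$ governing how fast $\Phi$ produces elements of $A$ of a given rank along any sufficiently sparse subset (using again that $A$ is uniformly $\Phi$-enumerable from every infinite subset of $B$, and that ranks are stable along subsets of the rank-minimal set $R$, by \cref{clm:computing_rank} and the definition of rank-minimality). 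One then sets $X:=\{n:\srk{n}{\emptyset}\le\srk{f_Z(n)}{\emptyset}\}$. By the design of $g$ this set does not depend on the choice of $Z\in[Y]^\w$; it is $\Delta^1_1(B)$ because $\lambda^*$ is; it is contained in $A$ because every $n\in X$ has $\srk{n}{\emptyset}<\infty$; and it is infinite, since the elements of $A$ whose rank $g$ lets $f_Z$ match `with room to spare' all belong to it. Clause~(b) then holds by construction, and $X,Y\in\Delta^1_1(B)$; the degenerate case in which $A$ realizes only boundedly many ranks is handled directly by taking a single witness of maximal rank, where one can even take $X=A$.

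The step I expect to be the main obstacle is exactly this coordination. The function $f_Z$ must be total and computable from $Z$ alone: it cannot pause in order to `certify' that $n\in A$, since by \cref{thm:failure_of_improving_enumerability}\ref{item:failure_right} no subset of $B$ can in general compute $A$, so $X$ genuinely has to be a proper subset of $A$; and a fixed cofinal sequence of high-rank elements of $A$ cannot be recovered from $Z$ without an extra oracle, as such a sequence need not even be $\Delta^0_2$. Thus $f_Z(n)$ must be allowed to vary with $Z$ while the comparison $\srk{n}{\emptyset}\le\srk{f_Z(n)}{\emptyset}$ stays $Z$-invariant, and this can only be arranged through the interplay of the enumeration $\Phi$, the rank-minimality of $R$, and the sparsity of $Z$ forced by $g$. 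Designing $g$ so that this holds — and verifying that the resulting $X$ is infinite — is the technical heart of the claim; the comparison procedure, the $\Delta^1_1(B)$ bookkeeping, and the deduction of \cref{prop:improving:main} from the claim are then routine.
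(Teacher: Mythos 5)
Your proposal does not actually establish the claim: the step you yourself flag as ``the technical heart'' --- designing a modulus $g$ so that $f_Z(n)$ is computable from $Z$ alone while the truth value of $\srk{n}{\emptyset}\le\srk{f_Z(n)}{\emptyset}$ is independent of $Z$ --- is left entirely open, and it cannot be carried out in the generality you describe. If the ranks $\srk{m}{\emptyset}$ of elements $m\in A$ oscillate (say a fixed small rank recurs infinitely often, interleaved with elements of larger and larger rank), then no sparseness condition on $Z$ forces the element of $A$ that $Z$ happens to enumerate first past a threshold to land on a prescribed side of $\srk{n}{\emptyset}$: two sets $Z,Z'\in[Y]^\w$ can produce witnesses $f_Z(n)$ and $f_{Z'}(n)$ whose ranks lie on opposite sides, and your $X$ would then depend on $Z$. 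Your ``degenerate case'' is also too narrow: a single witness of maximal rank need not exist even when the realized ranks are bounded, and it does not cover the genuinely problematic oscillating configurations.

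The missing idea is a dichotomy on the set $\left\{\srk{m}{\emptyset}:m\in A\right\}$. Either (1) for every $n\in A$, all but finitely many $m\in A$ satisfy $\srk{n}{\emptyset}<\srk{m}{\emptyset}$; then one takes $X=A$, thins $Y$ so that in addition $\srk{n}{\emptyset}\le\srk{m}{\emptyset}$ for all $m\in A$ with $m\ge p_Y(n)$, and lets $f_Z(n)$ be the first element of $A=\Phi(Z)$ found that exceeds $p_Z(n)$ (for $n\notin A$ the required equivalence holds because $\srk{n}{\emptyset}=\infty$). Or (2) this fails for some $n^*\in A$, i.e.\ infinitely many $n\in A$ have $\srk{n}{\emptyset}\le\srk{n^*}{\emptyset}$; then $X=\left\{n\in A:\srk{n}{\emptyset}\le\srk{n^*}{\emptyset}\right\}$ is infinite and $\Delta^1_1(B)$, and $f_Z$ is the constant function with value $n^*$, which is trivially $Z$-invariant. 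In both cases $Y$ need only be a $\Delta^1_1(B)$ subset of $R$ sparse enough that $p_Y$ is a comparative deficiency function (plus the extra domination in case (1)); no further modulus is needed. Note also that your opening paragraph reproves the comparison procedure that the paper runs \emph{after} the claim to deduce $X\uiT Y$; it is not part of the claim itself.
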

	
	\begin{proof}
		There are two cases, depending on the patterns in the set $\left\{ \srk{n}{\emptyset} \,:\,  n\in A \right\}$. 

		\smallskip

		In the first case, for every $n\in A$, for almost all $m\in A$, $\srk{n}{\emptyset} < \srk{m}{\emptyset}$. In that case we let $X = A$ and $Y\in [R]^\w$ sufficiently sparse so that $p_Y$ is a comparative deficiency function, and so that for all $n\in A$, for all $m\ge p_Y(n)$ in~$A$, $\srk{n}{\emptyset} \le \srk{m}{\emptyset}$. We can choose $Y\in \Delta^1_1(B)$ since $R\in \Delta^1_1(B)$, there is a $\Delta^1_1(B)$ comparative deficiency function, and the relation $\srk{n}{\emptyset} \le \srk{m}{\emptyset}$ for $n,m\in A$ is $\Delta^1_1(B)$. Given $Z\in [Y]^\w$, the function $f_Z(n)$ is computed as follows: since $A \uie Y$, with oracle~$Z$ we enumerate~$A$ until we find some $m\in A$ greater than $p_Z(n)$, and set $f_Z(n)=m$. 

		\smallskip

		If the first case fails, there is some $n^*\in A$ such that for infinitely many~$n\in A$ we have $\srk{n}{\emptyset} \le \srk{n^*}{\emptyset}$. We let $X = \left\{ n\in A \,:\,  \srk{n}{\emptyset} \le \srk{n^*}{\emptyset} \right\}$. Again $X\in \Delta^1_1(B)$ as the ranking function is~$\Delta^1_1(B)$. We let the function~$f_Z$ be the constant function with value $n^*$. We let $Y\in [R]^\w$ be sufficiently sparse so that $p_Y$ is a comparative density function. 
	\end{proof}
	
	We can now prove the proposition. We claim that sets~$X$ and~$Y$ as given by \cref{clm:last} are as required: it remains to show that $X \uiT Y$. 

	\smallskip
	
	Let $Z\in [Y]^\w$, and let $n\in \w$. To decide whether $n\in X$, with oracle~$Z$, we perform the following procedure. Let $m = f_Z(n)$. We define two sequences $q_0 < q_1 < q_2 < \cdots$ and $r_0 < r_1 < r_2 < \cdots$ recursively; we let $E_i = \left\{ q_j \,:\,  j<i \right\}$ and $F_i = \left\{ r_j \,:\,  j<i \right\}$ (so $E_0 = F_0 = \emptyset$). 

	At step~$i\ge 0$, given~$E_i$ and~$F_i$, 
	\begin{itemize}
		\item First we choose $q_i\in Z$ greater than $p_Z(n,E_i)$ and $p_Z(m,F_i;n,E_i)$.
		\item Then we choose $r_i\in Z$ greater than $p_Z(m,F_i)$ and $p_Z(n,E_{i+1};m,F_i)$. 
	\end{itemize}
	The process halts at the least~$j$ such that $m\in \Phi(F_j)$. There will be such a~$j$ because otherwise, $F = \bigcup_j F_j$ is an infinite subset of~$B$ with $m\notin \Phi(F)$, however $m\in A$. Now we verify that 
	\[
		n\in X \Iff n\in \Phi(E_j). 
	\]

	First, suppose that $n\in X$; we show that $n\in \Phi(E_j)$. We prove by induction on~$i\le j$ that $\srk{n}{E_i} \le \srk{m}{F_i}$; this suffices, as we would get $\srk{n}{E_j} \le \srk{m}{F_j} = 0$. The case $i=0$ is given by the properties of~$X$ and $m = f_Z(n)$ as specified by \cref{clm:last}. For the inductive step, let $i<j$ and suppose that $\srk{n}{E_i}\le \srk{m}{F_j}$. If $\srk{n}{E_i}= 0$ then certainly $\srk{n}{E_{i+1}} = 0 \le \srk{m}{F_{i+1}}$ (and in fact, $n\in \Phi(E_i)$ so $n\in \Phi(E_j)$ which is what we really want to prove). Suppose that $\srk{n}{E_i}>0$. Since $q_i > p_Y(n,E_i)$ is an element of~$R$, we have $\srk{n}{E_{i+1}} < \srk{n}{E_i}$ and so $\srk{n}{E_{i+1}} < \srk{m}{F_j}$. Since $r_i > p_Y(n,E_{i+1};m,F_i)$, we have $\srk{n}{E_{i+1}}\le \srk{m}{F_{i+1}}$ as required. 

	Next, suppose that $n\notin X$; we show that $n\notin \Phi(E_j)$. If $n\notin A$ this is clear, so we may assume that $n\in A$. We show that $\srk{n}{E_j}>0$. By induction on $i\le j$ we show that $\srk{n}{E_i} > \srk{m}{F_i}$; then we will have $\srk{n}{E_j} > \srk{m}{F_j} \ge 0$. Again for $i=0$ this is by the choice of~$X$ and $m = f_Z(n)$. Suppose that $i<j$ and $\srk{n}{E_i} > \srk{m}{F_i}$. Since $q_i > p_Y(m,F_i;n,E_i)$ and is an element of~$R$, we have $\srk{n}{E_{i+1}} \ge \srk{m}{F_i}$. Since $i<j$, the minimality of~$j$ implies $\srk{m}{F_i}>0$, and so since $r_i> p_Y(m,F_i)$ we have $\srk{m}{F_{i+1}}< \srk{m}{F_i} \le \srk{n}{E_{i+1}}$, as required. 

	\medskip
	
	This completes the proof of \cref{prop:improving:main}. We remark that the computation process just described can be simplified in the first case of \cref{clm:last}, because that case gives $X = A$; we only need to require that $q_i > p_Y(n,E_i)$ and $r_i > p_Y(n,E_{i+1};m,F_i)$. In that case we get $A \uiT Y$, that is, \cref{prop:1.6_strengthened} (and so \cref{thm:improving_enumerability_one_sided}) holds for~$A$ and~$B$ without the assumption that~$A$ is introenumerable. That assumption is only used when we are at the second case of~\cref{clm:last}, when $X\ne A$. \Cref{thm:failure_of_improving_enumerability}\ref{item:failure_right} shows that this case does happen. 
\end{proof}


\bibliography{References}
\bibliographystyle{alpha}

\end{document}